\begin{document}

\title[]
{Positivity of twisted relative pluricanonical bundles and their direct images}

\author{Mihai P\u aun and Shigeharu Takayama}

\address{Mihai P\u aun \endgraf
Korea Institute for Advanced Study, \endgraf
Seoul, 130-722
South KOREA}
\email{paun@kias.re.kr}

\address{Shigeharu Takayama \endgraf   
Graduate School of Mathematical Sciences,
The University of Tokyo \endgraf   
3-8-1 Komaba, Tokyo, 153-8914, 
Japan}
\email{taka@ms.u-tokyo.ac.jp}


\date{\today}
\maketitle
\tableofcontents
\baselineskip=16pt

\theoremstyle{plain}
  \newtheorem{thm}{Theorem}[subsection]
  \newtheorem{thm'}{Theorem}[section]
  \newtheorem{prop}[thm]{Proposition}
  \newtheorem{prop'}[thm']{Proposition}
  \newtheorem{lem}[thm]{Lemma}
  \newtheorem{cor}[thm]{Corollary}
\theoremstyle{definition}  
  \newtheorem{dfn}[thm]{Definition}
  \newtheorem{exmp}[thm]{Example}
  \newtheorem{prob}[thm]{Problem}
  \newtheorem{notation}[thm]{Notation}
  \newtheorem{quest}[thm]{Question}
  \newtheorem{rem}[thm]{Remark}
  \newtheorem*{acknowledgement}{Acknowledgement}
  \newtheorem{no}[thm]{}
  \newtheorem{setup}[thm]{Set up}
\numberwithin{equation}{subsection}

\newcommand{\BC}{{\mathbb{C}}}
\newcommand{\BN}{{\mathbb{N}}}
\newcommand{\BP}{{\mathbb{P}}}
\newcommand{\BQ}{{\mathbb{Q}}}
\newcommand{\BR}{{\mathbb{R}}}
\newcommand{\BZ}{{\mathbb{Z}}}
\newcommand{\BD}{{\mathbb{D}}}

\newcommand{\CE}{{\mathcal{E}}}
\newcommand{\CF}{{\mathcal{F}}}
\newcommand{\CH}{{\mathcal{H}}}
\newcommand{\CI}{{\mathcal{I}}}
\newcommand{\CJ}{{\mathcal{J}}}
\newcommand{\CO}{{\mathcal{O}}}
\newcommand{\CQ}{{\mathcal{Q}}}
\newcommand{\CS}{{\mathcal{S}}}
\newcommand{\cV}{{\mathcal{V}}}

\newcommand{\fm}{{\mathfrak {m}}}

\newcommand{\ga}{{\alpha}}
\newcommand{\gb}{{\beta}}
\newcommand{\gc}{{\gamma}}
\newcommand\ep{\varepsilon}
\newcommand{\Ga}{{\Gamma}}
\newcommand{\Del}{{\Delta}}
\newcommand{\Dl}{{\Delta}}
\newcommand{\del}{{\delta}}
\newcommand{\lam}{{\lambda}}
\newcommand{\Lam}{{\Lambda}}
\newcommand{\vph}{{\varphi}}
\newcommand{\sg}{{\sigma}}
\newcommand{\Th}{{\Theta}}
\newcommand\w{\omega}
\newcommand\Om{\Omega}

\newcommand\wtil{\widetilde}
\newcommand\what{\widehat}

\newcommand\sm{\setminus}
\newcommand\ol{\overline}
\newcommand\ot{\otimes}
\newcommand\wed{\wedge}

\newcommand\sumn{\sum\nolimits}

\newcommand\lra{\longrightarrow}
\newcommand\da{\downarrow}

\newcommand\ai{\sqrt{-1}}
\newcommand\rd{{\partial}}
\newcommand\rdb{{\overline{\partial}}}

\newcommand\Amp{\mbox{{\rm Amp}}\, }
\newcommand\codim{\text{{\rm codim}}\, } 
\newcommand\NAmp{\mbox{{\rm NAmp}}\, }
\newcommand\Proj{\mbox{{\rm Proj}}\, }
\newcommand\Vol{\mbox{{\rm Vol}}\, }
\newcommand\rank{\mbox{{\rm rank}}\, }
\newcommand\Reg{\mbox{{\rm Reg}}\, }
\newcommand\Sing{\mbox{{\rm Sing}}\, }
\newcommand\supp{\mbox{{\rm supp}}\, }
\newcommand\Supp{\mbox{{\rm Supp}}\, }
\newcommand\vol{\mbox{{\rm vol}}}


Abstract.
Our main goal in this article is to establish a  quantitative version of the positivity properties of twisted relative pluricanonical bundles and their direct images. 
Some of the important technical points of our proof are an $L^{2/m}$-extension theorem of Ohsawa-Takegoshi type which is derived from the original result by a simple fixed point  method, and the notion of ``singular Hermitian metric" on vector bundles, together with an appropriate definition of positivity of the associated curvature.
Part of this
article is based on the joint work of the first named author with Bo Berndtsson \cite[Part A]{BP10}, and it can be seen as an expanded and updated version of it.

\

\newpage

\section{Introduction}

We continue to investigate here the (quantitative version of the) positivity properties of twisted relative pluricanonical bundles and their direct images. A particular case of the main result we establish in this paper is as follows.

\begin{thm'}\label{MT}
Let $f:X\to Y$ be a projective surjective morphism of complex manifolds with connected fibers, and let $Y_{0}\subset Y$ be the maximum Zariski open subset where $f$ is smooth.
Let $L$ be a holomorphic line bundle on $X$ with a continuous Hermitian metric $h$ with semi-positive curvature current.
Suppose that the direct image sheaf $f_{\star}(mK_{X/Y}+L)$ is non-zero for a positive integer $m$.
Then

(1) the line bundle $mK_{X/Y}+L$ admits a singular Hermitian metric $h^{(m)}=B_{m}^{-1}$ with semi-positive curvature, and on every fiber $X_{y}$ over $y \in Y_{0}$, the restricted dual $B_{m}|_{X_y}$ is the twisted $m$-th Bergman kernel metric $B_{m,y}$ of $-(mK_{X_{y}}+L|_{X_{y}})$.
In particular, if $X$ is compact, $mK_{X/Y}+L$ is pseudo-effective.

(2) The torsion free sheaf $f_{\star}(mK_{X/Y}+L)$ admits a singular Hermitian metric $g_{m}$ with Griffiths semi-positive curvature, and $g_{m}$ is continuous on $Y_{0}$ and $0<\det g_{m} <\infty$ on $Y_{0}$, moreover $g_{m}$ on each fiber $f_{\star}(mK_{X/Y}+L)_{y}$, which is in fact $H^{0}(X_{y}, mK_{X_{y}}+L|_{X_{y}})$, with $y\in Y_{0}$ is the so-called $m$-th Narashimhan-Simha Hermitian form.
In particular, if $Y$ is projective, $f_{\star}(mK_{X/Y}+L)$ is weakly positive at every $y\in Y_{0}$ in the sense of Nakayama \cite[V.3.20]{Nbook}.
\end{thm'}

We also deal with the case when the metric $h$ is singular.
In such a case, the corresponding statements become more complicated naturally (see \ref{Grp} for example). 
Let us add some explanations on the statement being $L=\CO_{X}$ for simplicity.
The weak positivity $f_{\star}(mK_{X/Y})$ at  every point $y \in Y_{m}$ for some Zariski open $Y_{m}\subset Y$ is known classically if $Y$ is projective, by the research of Kawamata \cite{Ka82}, Viehweg \cite{Vi1} concerning the Iitaka conjecture, building on earlier works by Griffiths \cite{Gr} and Fujita \cite{Ft}.
As a corollary of it, the pseudo-effectivity of $mK_{X/Y}$ follows. Actually this field has generated many important works, including the twisted case by a semi-ample line bundle $L$; we will only mention here a few of them 
\cite{B}, \cite{Cam04}, \cite{Fn}, \cite{Ho}, \cite{Ka98}, \cite{Ka02}, \cite{Ka09}, \cite{Ko86}, \cite{Ko07}, \cite{MT08}, \cite{MT09}, \cite{Ny}, \cite{Vbook} (and we apologize to the authors we omit ...).

Our main contributions in the statement above are to provide a quantitative counterpart of (1) and (2) above, and to describe the open subset $Y_{m}\subset Y$ where these properties behave nicely.
By this we mean that the semi-positivity property of $mK_{X/Y}$, as well as the (pointwise) weak positivity property of $f_{\star}(mK_{X/Y})$ can be obtained as a consequence of the existence of the natural Bergman and Narashimhan-Simha type  metrics we construct on these sheaves. 
We would also like to stress that the Griffiths semi-positivity of vector bundles or more generally torsion free sheaves is in practice stronger than the weak positivity in algebraic geometry (recall for example, a Griffiths positive vector bundle over a projective manifold is ample, but the converse is not known; or a nef line bundle is not necessarily semi-positive), and that Viehweg's weak positivity does not imply the curvature semi-positivity of the metric $h^{(m)}$ of $mK_{X/Y}$ nor $g_{m}$ on $f_{\star}(mK_{X/Y})$.
The explicit fiberwise description of $B_{m}$ is crucial in the process of construction of the metric $g_{m}$ on $f_{\star}(mK_{X/Y})$.

The metrics $h^{(m)}$ and $g_{m}$ appeared in Theorem \ref{MT} are both canonically defined as we shall explain next (see \S \ref{abs} in general, also \cite{NS}, \cite{Ka82}, \cite{Ts07}). 
These metrics are defined in a way similar to the Bergman kernel metric, but using $L^{p}$-spaces instead of $L^{2}$-spaces, with $p=2/m$.
To explain this, we let here $X$ be a compact complex manifold with a holomorphic  Hermitian line bundle $(L,h)$.
Then the $m$-th Bergman kernel metric $B_m$ on the dual $(mK_{X}+L)^{\star}$ is given, at every point $x\in X$ and for every vector $\xi_{x} \in (mK_{X}+L)^{\star}_{x}$, by
$$
	|\xi_{x}| = \sup |\xi_{x} \cdot  u(x) |,
$$ 
where the supremum being taken over all $u \in H^{0}(X,mK_{X}+L)$ with 
$$
	\|u\|_{m} := \left(\int_{X} (c u \wed \ol u h)^{1/m} \right)^{m/2} \le 1
$$ 
with a constant $c\in \BC$ which makes the pairing $c u \wed \ol u h$ positive definite.
Then the singular Hermitian metric $h^{(m)}$ on $mK_{X}+L$ is defined by the dual metric of the $m$-th Bergman kernel metric, i.e., $h^{(m)}=B_{m}^{-1}$.
Using $B_{m}$, we also define a Hermitian form $g_{m}$ on $H^{0}(X, mK_{X}+L)$ given by 
$$
	g_m(u,v)
= \int_{X} (-1)^{n^{2}/2} u\wed \ol v \, (B_m^{-1})^{(m-1)/m} h^{1/m},
$$ 
where $n=\dim X$.
When $m=1$, $g_{1}$ is nothing but the canonical $L^{2}$-pairing $\int_{X}(-1)^{n^{2}/2}u \wed \ol v h$ on $H^{0}(X, K_{X}+L)$.
Also in general $m\ge 1$, letting $h_{m-1}=(B_m^{-1})^{(m-1)/m} h^{1/m}$ be a singular Hermitian metric on $L_{m-1}=(m-1)K_{X}+L$, $g_{m}$ can be seen as the canonical $L^{2}$-pairing on $H^{0}(X, K_{X}+L_{m-1})$.
Theorem \ref{MT} asserts that the fiberwise metrics $h^{(m)}_{y}$, the pairings $g_{m,y}$ respectively, glues together nicely over $Y_{0}$ and has curvature positivity, and then extends over $Y$ across the singularities of $f$ keeping the curvature positivity.

\medskip

The main differences between Theorem \ref{MT} in its general form and the corresponding previous result \cite[4.2]{BPDuke} are as follows: (i) the morphism $f$ can be singular, (ii) the direct image sheaf can merely be torsion free, and (iii) the subset of $Y$ where the metrics $h^{(m)}$ and $g_{m}$ can be described explicitly 
is given in a very accurate manner.
To obtain these properties, there are some new ingredients which are introduced in this paper.
One is the $L^{2/m}$-version of Ohsawa-Takegoshi's extension theorem for Theorem \ref{MT}(1), another is a notion of singular Hermitian metrics on vector bundles and torsion free sheaves and its curvature positivity/negativity for Theorem \ref{MT}(2).

We recall Ohsawa-Takegoshi's $L^2$-extension theorem \cite{OT} (cf.\ \cite[12.9]{Dnote}) in a very basic set-up.
Let $\Omega\subset \BC^n$ be a ball of radius $r$ and let $\sg:\Omega\to \BC$ be a holomorphic function on $\Om$; $\sg\in \CO(\Om)$, such that $\sup_\Omega |\sg| \leq 1$, and that the gradient $\partial \sg$ of $\sg$ is nowhere zero on the set $V:=  \{\sg=0\}$. 
In particular $V$ is a smooth complex hypersurface in $\Om$.
We denote by $\varphi$ a plurisubharmonic function on $\Om$ such that its restriction to $V$ is well-defined (i.e., $\varphi|_{V}\not\equiv -\infty$). 
Then Ohsawa-Takegoshi's theorem states that for any $f \in \CO(V)$,
there exists $F \in \CO(\Om)$ such that $F|_{V}=f$ and 
$$
	\int_{\Omega} |F|^{2} \exp (-\varphi) d\lambda 
	\leq C_0 \int_{V} |f|^{2} \exp (-\varphi){{d\lambda_V}\over {|\partial \sg|^2}}, 
$$
where $C_0$ is an absolute constant, and where $d\lam$ and $d\lam_{V}$ are the volume form induced from the Euclidean metric on $\BC^n$. 
(The right hand side can formally be $+\infty$.)
Recently, the optimal bound of $C_{0}$ is obtained by B\l ocki \cite{blocki} and Guan-Zhou \cite{GZ}.
Then our generalization in the set-up above is the following (see \ref{SOT}).

\begin{prop'}\label{mOT1}
Let $m\ge 1$ be a real number.
Then for any $f \in \CO(V)$, there exists $F\in {\CO}(\Omega)$ such that $F|_{V}= f$ and satisfying the $L^{2/m}$-bound 
$$
	\int_{\Omega} |F|^{2/m}\exp (-\varphi)d\lambda 
	\leq C_0\int_{V} |f|^{2/m}\exp (-\varphi){{d\lambda_V}\over {|\partial \sg|^2}}, 
$$
where $C_0$ is the same constant as in the Ohsawa-Takegoshi theorem.
\end{prop'}

Once this result is established, the proof of Theorem \ref{MT}(1) runs as follows.
For any $y\in Y_{0}$, all the elements in $H^{0}(X_{y}, mK_{X_{y}}+L|_{X_{y}})$ extend locally near $y$, thanks to Siu's invariance plurigenera \cite{Siu} (see also \cite{Paun}). 
Thus over $Y_0$ (i.e., on $f^{-1}(Y_{0}) \subset X$), we can apply \cite[4.2]{BPDuke} and therefore the logarithm of the $m$-th Bergman kernel metric has a psh variation. 
We then use the  $L^{2/m}$-extension result to estimate our metric from above by a uniform constant. 
Standard results of pluri-potential theory then gives that the metric extends, across the singular fibers of $f$, to a semi-positively curved metric on all of $X$.

\medskip

Aiming to understand the metric properties of direct image sheaves $f_{\star}(mK_{X/Y}+L)$ in Theorem \ref{MT}(2), we explore a few aspects of the notion of \emph{singular Hermitian metric} on vector bundles and more generally on torsion free sheaves, in \S \ref{SsHm}. 
This notion is well understood and extremely useful in rank 1, i.e.\ for line bundles. 
Motivated by the study of direct image sheaves, one would like to develop a similar theory in the context of vector bundles $E$ of $\rank (E) \ge 2$.
At first glance it may look surprising that there are not so many works dedicated to this subject --with the notable exceptions of \cite{dC}, \cite[\S 3]{BPDuke}, \cite{Raufi1}, \cite{Raufi2}.
The observation which is central in the present article is that ``in practice" what we actually need is to define a notion of negativity or positivity of a (smooth) Hermitian vector bundle $(E, h_E)$ in order to derive some consequences of this property. 
The positivity in the sense of Griffiths has a very important property: the bundle $(E, h_E)$ is Griffiths positive if and only if the dual Hermitian bundle $(E^\star, h_{E^\star})$ is Griffiths negative (unfortunately, the Nakano positivity fails to satisfy this property).
It is then easy to see that the Griffiths semi-positivity of $(E, h_E)$ can be characterized as follows. 
Let $u \in H^{0}(\Om, E^{\star})$ be any non-zero section, where $\Omega\subset X$ is a coordinate open subset. 
The function $\displaystyle \log |u|_{h_{E}^\star}$ is psh on $\Omega$  if and only if $(E, h_E)$ is Griffiths semi-positive. 
Thus, we are able to formulate this notion without an explicit mention of the curvature tensor associated to $(E, h_E)$.

The above property of (smooth) Hermitian vector bundles is converted into the definition of the Griffiths semi-positivity in the context of singular Hermitian vector bundles, see \cite[\S 3]{BPDuke}. 
By definition, a singular Hermitian metric $h_E$ on the trivial vector bundle $E$ of rank $r$ defined on an open set $\Omega\subset \BC^n$ is simply a measurable map defined on $\Omega$ with values in the space of positive semi-definite $r\times r$-Hermitian matrices, such that $0<\det (h_E)<\infty$ almost everywhere. This later condition is imposed so as to avoid the metric $h_E$ to be everywhere degenerate e.g.\ at each point of $\Omega$ along some directions in $E$.  
If $E$ is non-trivial, a singular Hermitian metric $h_{E}$ on $E$ corresponds to a collection of singular Hermitian metrics on each local trivialization of $E$, verifying the usual glueing condition.
By the discussion above, we can define a notion of Griffiths semi-positivity for $(E, h_E)$.
The corresponding Nakano semi-positivity of a singular Hermitian metric does not seem to be well-understood yet; nevertheless we refer to \cite{Raufi1}, \cite{Raufi2} for the state of arts in this direction.

\noindent 
After treating the basic properties of singular Hermitian metrics (on torsion free sheaves, for example), we obtain the following general result \ref{Gp imply wp2} which is connected with algebraic geometry.
In fact we establish a more general Finsler metric version \ref{bigness}, which gives criteria for the pointwise bigness and the pointwise weak positivity of a torsion free sheaf.

\begin{thm'}
Let $Y$ be a smooth projective variety, and let $\CE$ be a torsion free sheaf on $Y$.
Suppose that $\CE$ admits a singular Hermitian metric $h$ with Griffiths semi-positive curvature.
Then $\CE$ is weakly positive at $y$ in the sense of Nakayama, for every $y\in Y$ where $\CE$ is locally free around $y$ and $\det h(y)<+\infty$.
\end{thm'}

Going back to the situation in Theorem \ref{MT}, the basic result is due to Berndtsson \cite[1.2]{B}: it states 
that 
the Hermitian metric on $f_{\star}(K_{X/Y}+L)|_{Y_{0}}$ obtained as a family of fiberwise canonical $L^{2}$-metrics has Nakano semi-positive curvature provided the metric $h$ is smooth semi-positive.
We then extend this result (but weakening to the semi-positivity in the sense of Griffiths) to $f_{\star}(K_{X/Y}+L)$ with possibly singular Hermitian line bundle $(L,h)$ with mild singular $h$, for example with trivial multiplier ideal $\CI(h) =\CO_{X}$ (see \ref{ext}).
Finally, regarding $mK_{X/Y}+L=K_{X/Y}+L_{m-1}$ as an adjoint type bundle for $L_{m-1}=(m-1)K_{X/Y}+L$ with a singular Hermitian metric $h_{m-1}$ as we explained above, we deduce Theorem \ref{MT}(2) from all these arguments mentioned above (see \ref{mcor}).

\medskip

The organization of this paper is as follows.
We will start with \S \ref{SsHm} a general discussion on singular Hermitian metrics on vector bundles, its curvature properties and its geometric consequences.
The positivity of relative adjoint type bundles $K_{X/Y}+L$ has been studied in \cite{BPDuke}.
We then proceed to the positivity of $f_{\star}(K_{X/Y}+L)$ in \S \ref{Sadj}, the positivity of $mK_{X/Y}+L$ in \S \ref{Srcb}, and finally the positivity of $f_{\star}(mK_{X/Y}+L)$ in \S \ref{Sdrc}.
Some variants and refinements of Theorem \ref{MT} (1) and (2) will be given in \S \ref{SAfew} and \S \ref{Sdrc2} respectively.

\medskip

Acknowledgments|{\rm Part of the questions addressed in this article were brought to our attention by L. Ein and R. Lazarsfeld; we are very grateful to them for this. 
The first named author would also like to thank F. Ambro, Y. Kawamata, J. Koll\'ar, M. Popa and H. Raufi for interesting and stimulating discussions about various subjects.
The second named author would like to thank Professor Noboru Nakayama for answering questions.
Both authors are deeply indebted to Bo Berndtsson for his constant help and encouragements during the preparation of the present work and for allowing us to include some unpublished material in \cite{BP10}.
This research of S. Takayama is supported by Grant-in-Aid for Scientific Research (B)23340013.}

\newpage

\section{Singular Hermitian metrics on vector bundles}\label{SsHm}

In this first part of our article we introduce a notion of \emph{singular Hermitian metric} on a vector bundle (and more generally, on a torsion free sheaf) and discuss the corresponding notion of positivity/negativity of the associated curvature.
There are earlier attempts to do so, cf. \cite{dC}, \cite{BPDuke}, \cite{Raufi1}, \cite{Raufi2} to quote only a few.

It turns out that as soon as the rank of vector bundle is greater than 1, we have at our disposal two notions of positivity:\ in the sense of Griffiths and in the sense of Nakano, respectively. 
The former corresponds to the type of hypothesis one has in algebraic geometry and has nicer functorial properties, whereas the later is important when one wants to establish vanishing results for higher cohomology. 
The observation which is central in the present article is that ``in practice" what we actually need is to define a notion of negativity (or positivity) of a Hermitian vector bundle $(E, h_E)$ in order to derive some consequences of this property. 

Our definition here is tailored in order to be compatible with the notion of
\emph{weak positivity} in algebraic geometry; more precisely, one of the results we 
want to hold true is that a torsion free sheaf which can be endowed with a singular Hermitian metric
with semi-positive curvature is automatically weakly positive.

\subsection{Prelude}

To start with, let $X$ be a complex manifold, and let $E\to X$ be a vector 
bundle of rank $r\geq 1$, endowed with a (smooth) Hermitian metric $h$. We denote by 
$$\Theta_h(E)\in {\mathcal C}^\infty_{1,1}\big(X, {\rm End}(E)\big)$$
the curvature form of $(E, h)$.
We refer to \cite[Ch.\ 3, Ch.\ 10]{Dnote} for basics.
\smallskip

\noindent We recall next two notions of positivity, playing a important role in analytic and algebraic geometry respectively. The former is the notion of positivity in the sense of Griffiths, and the later is its (conjectural)
counterpart in algebraic geometry.

\begin{dfn} Let $x_0\in X$ be a point, and let $(z_1,..., z_n)$ be a system of local coordinates on $X$ centered at 
$x_0$. We consider a local holomorphic frame $e_1,..., e_r$ of $E$ near $x_0$, such that it is orthonormal at $x_0$.  
The Chern curvature tensor can be locally expressed as follows
$$\Theta_h(E)= \sqrt{-1}\sum c_{j\ol k \lambda\ol \mu}dz_j\wedge d\ol z_k\otimes e_\lambda^\star\otimes e_\mu$$
where $j, k= 1,..., n$ and $\lambda, \mu= 1,..., r$. We say that $(E, h)$ is semi-positive in the sense of Griffiths
at $x_0$ if 
$$
	\sum c_{j\ol k \lambda\ol \mu}\xi_j\ol \xi_kv_\lambda\ol v_\mu\geq 0
$$
for every $\displaystyle \xi\in T_{X, x_0}$ and $v\in E_{x_0}$. 
The Hermitian bundle $(E, h)$ is semi-positive in the sense of Griffiths if it satisfies the property above at each point of $X$.
If there exists a Hermitian form $\omega$ on $X$ such that
$$
	\Theta_h(E)\geq \omega\otimes \text{Id}_{\rm End(E)},
$$
then we say that $(E, h)$ is positive (in the sense of Griffiths). 
The (semi-)negativity is defined in a similar manner.
\end{dfn}

It is well-known (cf.\ \cite[10.1]{Dnote}) that $(E, h)$ is semi-positive if and only if its dual $(E^\star, h^\star)$ is semi-negative in the sense of Griffiths
(we recall that the notion of Nakano positivity fails to satisfy this important property, although we do not recall the definition).

\begin{rem} We denote by ${\bf P}(E^\star)$ the projectivization of the dual of $E$ (we do not follow here the Grothendieck's convention here), and let $\CO_{{\bf P}(E^{\star})}(1)$ be the corresponding tautological line bundle. 
The metric $h$ induces a metric on $\CO_{{\bf P}(E^{\star})}(1)$ and (almost by definition) we see that if the bundle $(E, h)$ is semi-negative, then
$$ 	\log \vert v\vert_h^2 $$
is plurisubharmonic (psh, for short), for any local holomorphic section $v$ of the bundle $\CO_{{\bf P}(E^{\star})}(1)$.
This remark is crucial from our point of view, since it gives the possibility of \emph{defining} the notion of Griffiths positivity for a bundle $(E, h)$ without referring to the curvature tensor!
\end{rem}

\smallskip

\noindent On the algebraic geometry side, we recall that
a vector bundle $E$ on a projective manifold is said to be \emph{ample} if the associated bundle $\CO_{\BP(E)}(1)$ is an ample line bundle on $\BP(E)$. 
Then it is well-known that if $(E, h)$ is positive, then
$E$ is ample (see \cite[6.1.25]{PAG}); the converse, known as Griffiths conjecture, is a long standing and deemed difficult problem. 
\smallskip

\noindent According to \cite[6.1]{DPS}, a vector bundle $E$ on a projective manifold $X$ is \emph{pseudo-effective} if $\CO_{\BP(E)}(1)$ is a pseudo-effective line bundle, and if the union of all curves $C\subset \BP(E)$ with $\CO_{\BP(E)}(1)\cdot C<0$
is contained in a union of subvarieties which does not project onto $X$.
It is shown that in \cite[6.3]{DPS} that this property is equivalent to the  
fact that $E$ is \emph{weakly positive}: there exist positive integers $k_0$ and $m_1(k)$, together with an ample line bundle $A$ such that the map
$$H^0\big(X, S^{m}(S^{k}E\otimes A)\big)\to S^{m}(S^{k}E\otimes A)$$
is generically surjective, for any $k\geq k_0$ and $m\geq m_1(k)$. 
This notion can be formulated in the context of torsion free sheaves, and it is relevant in the following context. 
\vskip2mm
\begin{thm} \cite{Vi1} Let $f: X\to Y$ be a proper surjective morphism of projective manifolds with connected fibers. If $m$ is a positive integer, then the direct image sheaf $f_\star(mK_{X/Y})$ is weakly positive. 
Here we denote by $mK_{X/Y}$ the $m^{\rm th}$ tensor power of the relative canonical bundle.
\end{thm}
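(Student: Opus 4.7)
The plan is to derive this as a corollary of Theorem \ref{MT}(2) combined with the abstract result (the second \emph{Theorem'} stated in the introduction) that produces weak positivity from a Griffiths semi-positively curved singular Hermitian metric on a torsion free sheaf. The analytic work is packaged into these two statements, so what remains is to verify hypotheses and convert between notions of positivity.

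First I would apply Theorem \ref{MT} to the trivial line bundle $L=\CO_{X}$ equipped with the trivial continuous Hermitian metric $h\equiv 1$; this metric has vanishing, in particular semi-positive, curvature current. Part (2) of that theorem then produces a singular Hermitian metric $g_{m}$ on the torsion free sheaf $f_{\star}(mK_{X/Y})$ with Griffiths semi-positive curvature, continuous on the smooth locus $Y_{0}\subset Y$ of $f$ and satisfying $0<\det g_{m}<\infty$ on $Y_{0}$. Since $f$ is smooth on $Y_{0}$, flat base change ensures that $f_{\star}(mK_{X/Y})$ is locally free on $Y_{0}$ with fiber $H^{0}(X_{y}, mK_{X_{y}})$ at each $y\in Y_{0}$, so the hypotheses of the second Theorem' are satisfied at every such $y$. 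That theorem yields that $f_{\star}(mK_{X/Y})$ is weakly positive at every $y\in Y_{0}$ in the sense of Nakayama. Since $Y_{0}$ is Zariski open and dense in $Y$, pointwise Nakayama weak positivity on $Y_{0}$ supplies the generic surjectivity of the symmetric power maps $H^{0}(Y, S^{m}(S^{k}\CE\otimes A))\to S^{m}(S^{k}\CE\otimes A)$ that defines Viehweg weak positivity of $\CE=f_{\star}(mK_{X/Y})$.

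The main obstacle of this approach is not the deduction above but the establishment of the two inputs. On one hand, constructing the metric $g_{m}$ globally, and in particular extending it across the discriminant locus $Y\setminus Y_{0}$, hinges on the $L^{2/m}$ Ohsawa--Takegoshi extension of Proposition \ref{mOT1}, on Berndtsson's positivity theorem for adjoint direct images, and on the iterative identification $mK_{X/Y}=K_{X/Y}+L_{m-1}$ with $L_{m-1}=(m-1)K_{X/Y}$ endowed with a Bergman type singular metric $h_{m-1}$. On the other hand, the abstract passage from Griffiths semi-positivity of a singular Hermitian metric on a torsion free sheaf to algebraic weak positivity relies on the foundational work on singular Hermitian metrics on torsion free sheaves carried out in \S \ref{SsHm}. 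Granted those two inputs, Viehweg's theorem follows at once by specialization to $L=\CO_{X}$.
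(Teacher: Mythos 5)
The statement you are proving is quoted in the paper as a known result of Viehweg, with a bare citation to \cite{Vi1}: the paper does not supply its own proof, and indeed invokes it only to motivate the authors' metric framework. What you have written is therefore not a reconstruction of an argument that appears in the paper, but a derivation of Viehweg's theorem as a corollary of the paper's main results. That said, your derivation is essentially the one the introduction advertises when it says the pointwise weak positivity of $f_{\star}(mK_{X/Y})$ ``can be obtained as a consequence of the existence of the natural Bergman and Narashimhan--Simha type metrics,'' so your route is the intended one and the logical chain is sound: apply Theorem \ref{MT}(2) with $(L,h)=(\CO_X,1)$ to produce the Narashimhan--Simha metric $g_m$ on $f_{\star}(mK_{X/Y})$, then invoke Theorem \ref{Gp imply wp2} on $Y_0$ to get Nakayama weak positivity there, and finally pass from pointwise Nakayama weak positivity to Viehweg's weak positivity by using that, for each $a$, global generation of $\what S^{ab}\CF\ot\CO(bA)$ at a single $y\in Y_0$ propagates to a Zariski neighbourhood and hence is generic.

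Two small imprecisions are worth flagging. First, local freeness of $f_{\star}(mK_{X/Y})$ over $Y_0$ is not a consequence of flat base change alone (cohomology can jump for smooth families); it is Siu's invariance of plurigenera that forces $h^0(X_y, mK_{X_y})$ to be constant on $Y_0$, whence Grauert's theorem applies. Alternatively, and more in keeping with the way you structured your argument, the conclusion $0<\det g_m<\infty$ on $Y_0$ in Theorem \ref{MT}(2) already presupposes local freeness on $Y_0$, so you may simply quote that. Second, the translation between Nakayama's pointwise weak positivity (in terms of $\what S^{ab}\CF\ot\CO(bA)$) and the DPS characterisation (in terms of $S^m(S^k E\ot A)$) that you write at the end should be made with a reference to \cite[6.3]{DPS} and \cite[V.3.20]{Nbook} rather than presented as an identity of formulations; the conclusion is correct but the two definitions are not literally the same. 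None of this affects the validity of your proposal: granted the paper's two main inputs, Viehweg's theorem follows by specialisation exactly as you say.
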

\vskip2mm

Our goal in what follows is twofold. We define a metric property of a Hermitian vector bundle $E$ which 
--in the light of Griffiths conjecture-- should correspond to the notion of the weak positivity
(we will at least show that it implies the weak positivity). 
Also, our metric property should be verified by the direct images $f_\star(mK_{X/Y})$. The candidate we have for this is presented in the next subsections; we start with its local version.

\subsection{Singular metrics on trivial bundles}

Here we suppose that $E = X \times \BC^{r}$ is the trivial vector bundle of rank $r$ on a complex manifold $X$.
We denote by
$H_{r}:=\{A=(a_{i\ol j})\}$ the set of $r \times r$, semi-positive definite Hermitian matrixes.
We endow our manifold $X$ with the Lebesgue measure.
\medskip

\noindent We recall the following notion.

\begin{dfn}\label{sHm}
A {\it singular Hermitian metric} $h$ on $E$ is a measurable map from $X$ to $H_{r}$ satisfying $0<\det h<+\infty$ almost everywhere.
\qed\end{dfn}

\noindent By ``convention", a matrix valued function $h=(h_{i\ol j})$ is measurable (resp.\ smooth, continuous, $\ldots$), if all entries $h_{i\ol j}$ are measurable (resp.\ smooth, continuous, $\ldots$).
We will identify two functions if they coincide almost everywhere (however, there is a risk of confusion that a dis-continuous function can be continuous under this convention, for example).
We note that in the paper \cite[p.\,357]{BPDuke}, one does not requires the condition $0<\det h<+\infty$ almost everywhere in the definition of a \emph{singular Hermitian metric}
on a vector bundle. However, as it was highlighted in \cite{Raufi1}, \cite{Raufi2}, 
this additional condition concerning the determinant is important, as it implies that one can define the notion of
\emph{curvature tensor} associated to a singular Hermitian metric.
A standard notion of singular Hermitian metrics of line bundles is $h=e^{-\vph}$ with $\vph \in L^{1}_{loc}(X,\BR)$. 
Our definition in \ref{sHm} requires less in general.

A local section $v$ of $E$ means an element $v \in H^{0}(U,E)$ on some open subset $U\subset X$ (we always assume $v$ to be holomorphic).
If $h$ is a singular Hermitian metric and $v \in H^{0}(U,E)$, then $|v|_{h} : U \to \BR_{\ge 0}$ is a measurable function given by $|v|_{h}^{2} = {}^{t}v h \ol v=\sum h_{i\ol j}v^{i}\ol{v^{j}}$ ($v={}^{t}(v^{1},\ldots,v^{r})$ is a column vector).
Following \cite[p.\,357]{BPDuke}, we introduce the curvature positivity/negativity as follows.

\begin{dfn}\label{curv}
Let $h$ be a singular Hermitian metric on $E$.
\smallskip

\noindent
(1) $h$ is {\it negatively curved} (or $h$ has Griffiths semi-negative curvature), if for any open subset $U\subset X$ and any $v \in H^{0}(U,E)$, $\log |v|_{h}^{2}$ is psh on $U$.
\smallskip

\noindent
(2) $h$ is {\it a.e.\,negatively curved}, if for any open subset $U\subset X$ and any $v \in H^{0}(U,E)$, there exists a psh function $\psi$ on $U$ such that $\psi=\log |v|_{h}^{2}$ almost everywhere on $U$ (we may say $\log |v|_{h}^{2}$ is a.e.\,psh on $U$).
\smallskip

\noindent
(3) $h$ is {\it positively curved} (or $h$ has Griffiths semi-positive curvature), if the dual singular Hermitian metric $h^{\star}:={}^{t}h^{-1}$ on the dual vector bundle $E^{\star}$ is negatively curved.
\qed\end{dfn}

\noindent When $h$ is smooth and $0<\det h<+\infty$ everywhere (i.e. if $h$ is a Hermitian metric
in the usual sense), the requirement \ref{curv}(1) (resp.\ (3)) is nothing but the classical Griffiths semi-negativity (resp.\ Griffiths semi-positivity) of $h$ (see, \cite[\S 2]{Raufi1}). 
\medskip

We have the following remark, which (hopefully) clarifies some aspects of the notion above.

\begin{rem}\label{a.e.}
(1)
If $h$ is negatively curved, then $(h_{i\ol j}(x)) \in H_r$ for any $x \in X$ as we now see.
For a constant vector $v_{1}={}^{t}(1,0,\ldots,0)$, $|v_{1}|_{h}^{2}=h_{1\ol 1}$ is semi-positive and psh.
Hence it is everywhere defined $h_{1\ol 1}\in [0,+\infty)$ and locally bounded.
 For $v={}^{t}(1,1,0,\ldots,0)$, we have $|v|_{h}^{2}=h_{1\ol 1}+h_{2\ol 2}+2\text{Re}\, h_{1\ol 2}$.
For $v'={}^{t}(1,\ai,0,\ldots,0)$, we have $|v'|_{h}^{2}=h_{1\ol 1}+h_{2\ol 2}+2\text{Im}\, h_{1\ol 2}$.
Thus by the same token, $h_{1\ol 2}$ is everywhere defined, and
$|h_{1\ol 2}|^{2}\le h_{1\ol 1}h_{2\ol 2} \le \frac12(h_{1\ol 1}^2+h_{2\ol 2}^2)$.
\smallskip

\noindent (2)
The argument in (1) also shows that if $h$ is a.e.\,negatively curved, there exists  a unique negatively curved singular Hermitian metric $h'$ such that $h=h'$ almost everywhere.
With a suitable convention, we may need not to distinguish negatively curved and a.e.\,negatively curved metrics. 
In the case of line bundles, we do not need to.
It is only a difference $\vph \in L^{1}_{loc}$ with $\ai\rd\rdb \vph \ge 0$ in the sense of distribution and $\vph$ psh.
We could also defined a.e.\,positively curved singular Hermitian metrics.
However in the line bundle case, we suppose every singular Hermitian metric $h$, whose curvature is bounded from below by a continuous form, has an upper-semi-continuous local weight, i.e., $h=e^{-\vph}$ with quasi-psh $\vph$.
\qed
\end{rem}

\begin{rem}
(1)  
Following \cite[Theorem 1.3]{Raufi1}, we will present next here an example 
of negatively curved metric on the trivial rank 2 bundle $E=\BC \times \BC^2$ on $\BC$, for which the curvature tensor cannot be defined in a reasonable way around the origin. 
Indeed, the coefficients of connection matrix corresponding to the next metric 
$$
h=\begin{pmatrix}
1+|z|^{2}& z \\
\ol z& |z|^{2}
\end{pmatrix}
$$
are not in $L^1(\BC, 0)$. 
\smallskip


\noindent
(2) One could define a singular Hermitian metric $h$ on $E$ by requiring that the map $h:X\to H_{r}$ is measurable, such that $\log \det h \in L^{1}_{loc}(X,\BR)$.
The line bundle case, this is the definition. 
However, as the previous example ($h$ with $\det h = |z|^4$) shows it, the curvature tensor 
corresponding to such a metric is not well-defined.
\qed
\end{rem}

\noindent We collect some basic local properties.
The first and most basic one is the following approximation result.

\begin{lem}\label{appr} \cite[3.1]{BPDuke} 
Suppose $X$ is a polydisc in $\BC^{n}$, and suppose $h$  is a singular Hermitian metric on $E$ with negative (resp.\ positive) curvature.
Then, on any smaller polydisc there exists a sequence of smooth Hermitian metrics $\{h_{\nu}\}_{\nu}$
decreasing (resp.\ increasing) pointwise to $h$ whowe corresponding curvature tensor is Griffiths negative (resp.\ positive).
\end{lem}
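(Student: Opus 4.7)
The plan is to regularize $h$ by standard convolution with smooth mollifiers. Let $\rho\ge 0$ be a smooth, radially symmetric, compactly supported function on $\BC^{n}$ with $\int \rho\, d\lambda=1$, and set $\rho_{\nu}(z)=\nu^{2n}\rho(\nu z)$. For the negatively curved case, define $h_{\nu}:= h\ast \rho_{\nu}$ entry-by-entry on a slightly smaller polydisc $X'\subset X$ (valid once $\nu$ is large enough so that the support of $\rho_\nu$ fits inside $X$). By Remark \ref{a.e.}(1) each entry of $h$ is locally bounded, so $h_{\nu}$ is smooth. It is Hermitian as a convex combination of Hermitian matrices, and for any nonzero constant vector $v\in \BC^{r}$ we have $|v|_{h_{\nu}}^{2}=|v|_{h}^{2}\ast \rho_{\nu}$; since $\det h>0$ almost everywhere, $|v|_{h}^{2}$ is strictly positive on a set of full measure, so the convolution is strictly positive everywhere and $h_{\nu}$ is positive definite.

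The heart of the matter is the Jensen-type claim that $\log(e^{u}\ast \rho_{\nu})$ is psh whenever $u$ is psh. For smooth psh $u$, restrict to a complex line $\zeta\mapsto z_{0}+a\zeta$ and set $\tilde U(\zeta):=(e^{u}\ast \rho_{\nu})(z_{0}+a\zeta)$; differentiating under the integral sign and applying the Cauchy--Schwarz inequality to the positive measure $e^{u}\rho_{\nu}\,d\lambda$ yields
$$
\tilde U\cdot \partial_{\zeta}\partial_{\bar\zeta}\tilde U - |\partial_{\zeta}\tilde U|^{2} \ \ge\ \tilde U\int e^{u}\,\partial_{\zeta}\partial_{\bar\zeta}u\,\rho_{\nu}\,d\lambda\ \ge\ 0,
$$
since $u$ is subharmonic along the line. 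Division by $\tilde U^{2}$ gives $\partial_{\zeta}\partial_{\bar\zeta}\log \tilde U\ge 0$. The general psh case follows by approximating $u$ from above by smooth psh functions and passing to the decreasing limit. Taking $u=\log|v|_{h}^{2}$ (psh by hypothesis) shows $\log|v|_{h_{\nu}}^{2}=\log(e^{u}\ast \rho_{\nu})$ is psh on $X'$ for every constant $v\in \BC^{r}$. Since Griffiths semi-negativity of a smooth Hermitian metric is a pointwise curvature condition tested on vectors in $T_{X}\otimes E$, and any vector in the fiber of $E$ extends to a constant section of the trivial bundle, this gives the usual Griffiths semi-negativity of the smooth metric $h_{\nu}$.

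For the monotone convergence, observe that $|v|_{h}^{2}=\exp(\log|v|_{h}^{2})$ is psh and hence upper-semi-continuous, so by the classical monotonicity of mollification against a radial kernel, $|v|_{h}^{2}\ast \rho_{\nu}\searrow |v|_{h}^{2}$ pointwise as $\nu\to\infty$; by polarization this is equivalent to $h_{\nu}\searrow h$ in the order of Hermitian matrices, and in particular to entry-by-entry convergence at every point. For positively curved $h$, apply the negative case just established to $h^{\star}={}^{t}h^{-1}$ to produce smooth Griffiths-negative $g_{\nu}\searrow h^{\star}$, and set $h_{\nu}:={}^{t}g_{\nu}^{-1}$; these metrics are smooth and Griffiths positive (by the duality for smooth bundles), and since inversion of positive-definite Hermitian matrices reverses the order, $h_{\nu}\nearrow h$ pointwise.

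I expect the main technical obstacle to be the preservation of Griffiths semi-negativity under convolution, i.e.\ the log-convolution identity in the second paragraph, which is the one place the curvature hypothesis is essentially used; the remaining ingredients (local boundedness of $h$, the monotone pointwise limit of psh mollifications, and the duality between Griffiths positivity and negativity for smooth bundles) are entirely standard.
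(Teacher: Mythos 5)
Your strategy — convolution of the entries of $h$ against a radial mollifier, the Cauchy--Schwarz/Jensen verification that $\log(e^{u}\ast\rho_{\nu})$ is psh, the pointwise monotone convergence, and the passage to the positive case via the dual — is exactly the one the paper relies on, which quotes \cite[3.1]{BPDuke} and remarks only that $\{h_\nu\}$ is ``obtained simply by convolution.'' However, there is a genuine gap in the step where you pass from constant sections to Griffiths semi-negativity. You write that since any vector in a fiber extends to a constant section, verifying $\log|v|^2_{h_\nu}$ psh for constant $v$ yields Griffiths semi-negativity of $h_\nu$. This is not correct: the equivalence ``$\log|v|^2_h$ psh for local holomorphic $v$ $\iff$ $h$ Griffiths semi-negative'' uses, for the converse direction, sections $v$ with $D'v=0$ at the test point, and a \emph{constant} section has $D'v = h^{-1}(\partial h)v \neq 0$ in general. (Equivalently: the constant-section condition is frame-dependent, while Griffiths semi-negativity is not; e.g.\ a holomorphic change of frame $A(z)$ turns constant sections of $h'=A^{\star}hA$ into the non-constant sections $z\mapsto A(z)v$ of $h$.) What you really need is that $\log|v|^2_{h_\nu}$ is psh for \emph{every} local holomorphic section $v$ — that is the Definition \ref{curv}(1) being transported to the smooth $h_\nu$.

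The gap is easy to close, and the fix is where convolution is actually used in an essential way. For an arbitrary holomorphic $v$ one no longer has $|v|^2_{h_\nu}=|v|^2_h\ast\rho_\nu$, but one does have
$$
|v(z)|^{2}_{h_{\nu}(z)}=\int v(z)^{\star}\,h(z-w)\,v(z)\,\rho_{\nu}(w)\,d\lambda(w)=\int e^{u_{w}(z)}\,\rho_{\nu}(w)\,d\lambda(w),
$$
where, setting $v_{w}(z'):=v(z'+w)$ (a holomorphic section), $u_{w}(z):=\log\big(v(z)^{\star}h(z-w)v(z)\big)=\log|v_{w}(z')|^{2}_{h(z')}\big|_{z'=z-w}$. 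Since $h$ is negatively curved, $z'\mapsto\log|v_{w}(z')|^{2}_{h(z')}$ is psh, and composing with the translation $z\mapsto z-w$ keeps it psh. Now your Cauchy--Schwarz/Jensen computation applies verbatim to the family $\{u_{w}\}$ integrated against the positive measure $\rho_{\nu}\,d\lambda$, and yields plurisubharmonicity of $\log|v|^2_{h_\nu}$ for every holomorphic $v$. This is the statement that $h_\nu$, which is smooth, is negatively curved in the sense of Definition \ref{curv}(1), which for smooth metrics is the classical Griffiths semi-negativity. The rest of your argument (local boundedness of the entries, positive-definiteness of $h_\nu$ from $\det h>0$ a.e., pointwise monotone convergence, and the dual passage for the positively curved case) is fine.
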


In the statement above the sequence $\{h_{\nu}\}_{\nu}$ is called \emph{decreasing} in the sense that
the sequence of functions $\{|s|_{h_{\nu}}^{2}\}_{\nu}$ is decreasing for any constant section $s$, or equivalently, the matrix corresponding to each difference $h_{\nu}-h_{\nu+1}$ is semi-positive definite.
We note that $\{h_{\nu}\}_{\nu}$ is obtained simply by convolution (which is possible that to the 
hypothesis concerning $X$).
\smallskip

\noindent This result is relevant for example in the following context.

\begin{lem}\label{det} \cite[Proposition 1.1(ii)]{Raufi1}
Suppose a singular Hermitian metric $h$ is negatively curved.
Then $\log\det h\in L^{1}_{loc}(X,\BR)$ and psh.
\end{lem}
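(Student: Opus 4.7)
The plan is to reduce the statement to the smooth setting via the approximation Lemma \ref{appr} and then pass to a decreasing limit of plurisubharmonic functions, using the condition $0<\det h<+\infty$ a.e.\ (built into Definition \ref{sHm}) as the crucial ingredient that prevents the limit from being identically $-\infty$. Since the conclusion is local, I may freely shrink $X$ and assume it is a polydisc, apply Lemma \ref{appr}, and obtain a sequence of smooth Hermitian metrics $\{h_{\nu}\}_{\nu}$ on $E$, Griffiths semi-negative in the classical sense, which decreases pointwise to $h$ (in the matrix sense $h_{\nu}-h_{\nu+1}\ge 0$).

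First I would treat the smooth case: for each $\nu$, I claim $\log\det h_{\nu}$ is psh. Recall that the curvature of the determinant line bundle is the trace of the curvature of $E$, i.e.\ $\Theta(\det E,\det h_{\nu}) = \mathrm{tr}\,\Theta_{h_{\nu}}(E)$. Griffiths semi-negativity of $(E,h_{\nu})$ means $\sum c_{j\ol k\lambda\ol\mu}\xi_{j}\ol\xi_{k}v_{\lambda}\ol v_{\mu}\le 0$; summing over an $h_{\nu}$-orthonormal frame in $E$ yields $\sum_{j,k}(\mathrm{tr}\,c)_{j\ol k}\xi_{j}\ol\xi_{k}\le 0$, so $\Theta(\det E,\det h_{\nu})$ is a semi-negative $(1,1)$-form. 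Writing $\det h_{\nu}=e^{-\psi_{\nu}}$ we have $\ai\rd\rdb\psi_{\nu}=\Theta(\det E,\det h_{\nu})\le 0$, hence $\psi_{\nu}=-\log\det h_{\nu}$ is pluri-superharmonic, i.e.\ $\log\det h_{\nu}$ is psh.

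Next I would pass to the limit. Because $h_{\nu}\ge h_{\nu+1}$ as positive semi-definite Hermitian matrices, the standard monotonicity of the determinant on the positive semi-definite cone gives $\det h_{\nu}\ge\det h_{\nu+1}$, so the sequence $\log\det h_{\nu}$ is a pointwise decreasing sequence of psh functions on each smaller polydisc. Pointwise convergence $h_{\nu}(x)\to h(x)$ and continuity of the determinant give $\log\det h_{\nu}\downarrow\log\det h$. By the classical dichotomy for decreasing sequences of psh functions on a connected open set, the limit is either identically $-\infty$ or psh; since $\det h>0$ almost everywhere by Definition \ref{sHm}, $\log\det h>-\infty$ on a set of positive measure in every connected component, so the first alternative is excluded. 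Thus $\log\det h$ is psh; in particular $\log\det h\in L^{1}_{\mathrm{loc}}(X,\BR)$ since a psh function not identically $-\infty$ on a connected open set is locally integrable.

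The main (minor) obstacle is precisely the dichotomy step: without the hypothesis $0<\det h$ a.e., nothing would prevent $\log\det h\equiv-\infty$ and one would only get that $\log\det h$ is $-\infty$ or psh. This is exactly why the determinant condition is included in Definition \ref{sHm} and, as noted after that definition, is the point of departure from the more permissive setup in \cite{BPDuke}. The upper bound $\det h<+\infty$ a.e.\ is automatic from Remark \ref{a.e.}(1), since negative curvature already forces the diagonal entries (hence $\det h$) to be locally bounded, which also gives that $\log\det h$ is locally bounded from above, consistent with its pluri-subharmonicity.
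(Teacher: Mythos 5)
Your proof is correct and follows essentially the same path as the paper's: approximate $h$ by smooth Griffiths semi-negative metrics $h_\nu$ via Lemma \ref{appr}, note that $\log\det h_\nu$ is psh and decreases to $\log\det h$, and conclude. The paper's proof is terser (it does not spell out the trace-curvature computation, the monotonicity of $\det$ on the positive cone, or the role of the $\det h>0$ a.e.\ hypothesis in excluding the $-\infty$ alternative), but those are exactly the details your write-up correctly supplies.
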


\begin{proof}
Let $\{h_{\nu}\}_{\nu}$ be the sequence of metrics approximating $h$, with the properties stated in
\ref{appr}. We define $\vph_{\nu}:=\log\det h_{\nu}$, the weight of the induced metric on the determinant bundle; it is psh (and smooth), and $\vph_{\nu}$ is decreasing to $\vph_{h}:=\log\det h$.
Thus $\vph_{h}$ is psh and in particular $\vph_{h} \in L^{1}_{loc}(X,\BR)$.
\end{proof}

\noindent As we have already mentioned in \ref{a.e.}, a negatively curved singular Hermitian metric is well-defined in
each point of $X$, thus one can define the set of its eigenvalues (with respect to any smooth
positive definite Hermitian metric). We present in our next statement a few basic properties of these functions.

\begin{lem} \label{eigenbd}
Let $h=(h_{i\ol j})$ be a singular Hermitian metric on $E$.
Let $U\subset X$ be a relatively compact open subset.

\noindent {\rm (1)} Suppose $h$ is negatively curved.
Then there exists a constant $C>0$ such that $|h_{i\ol j}|\le C$ on $U$ for any pair of indices $i,j$.
Let $0\le\lam_{1}(x)\le \lam_{2}(x)\le\ldots \le\lam_{r}(x)< +\infty$ be the eigenvalues of $h(x)$ at each point $x\in X$. Then we have $\lam_{r}(x)\le rC$ on $U$,
$\lam_{1}(x)\ge \frac1{(rC)^{r-1}} \det h(x)$ on $U$, and
$h-\frac1{(rC)^{r-1}} (\det h)I_{r}$ is a measurable map from $U$ to $H_{r}$, where $I_{r}$ is the $r\times r$  identity matrix and $H_r$ is the set of semi-positive Hermitian forms.

\noindent {\rm (2)} Suppose $h$ is positively curved.
Then there exists a constant $C'>0$ (independent of $x\in U\sm \{\det h^{\star}=0\}$) such that $\lam_{1}(x)\ge \frac1{rC'}$ and $\lam_{r}(x)\le (rC')^{r-1} \det h(x)$ on $U \sm \{\det h^{\star}=0\}$, where $0\le\lam_{1}(x)\le \lam_{2}(x)\le\ldots \le\lam_{r}(x)<+\infty$ are eigenvalues of $h(x)$ at $x\in X\sm \{\det h^{\star}=0\}$.
\end{lem}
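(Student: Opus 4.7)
My plan is to rely on pointwise linear algebra applied to the values $h(x) \in H_{r}$, combined with the plurisubharmonicity information already isolated in Remark \ref{a.e.}(1). No deep analytic input will be needed beyond what has been established so far.

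For part (1), the first step is to establish the uniform coefficient bound. The diagonal entries arise as $h_{i\ol i}(x) = |e_{i}|_{h(x)}^{2}$, where $e_{i}$ is the $i$-th standard basis section; hence, by the negativity of $h$, each $h_{i\ol i}$ is psh and nonnegative, so upper semicontinuous and locally bounded above on the relatively compact open set $U$. The off-diagonal entries are then controlled by the pointwise Cauchy--Schwarz inequality $|h_{i\ol j}(x)|^{2} \le h_{i\ol i}(x) h_{j\ol j}(x)$, valid because $h(x) \in H_{r}$. Combining these ingredients yields a uniform $C > 0$ with $|h_{i\ol j}| \le C$ on $U$ for every pair $i,j$.

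With the coefficient bound in hand, the eigenvalue estimates reduce to elementary linear algebra applied pointwise. The largest eigenvalue is dominated by the trace, giving $\lam_{r}(x) \le \sum_{i} h_{i\ol i}(x) \le rC$; and $\det h(x) = \prod_{i} \lam_{i}(x) \le \lam_{1}(x)\,\lam_{r}(x)^{r-1}$ then yields $\lam_{1}(x) \ge \det h(x)/(rC)^{r-1}$. The semi-definiteness of $h - \frac{1}{(rC)^{r-1}}(\det h)\,I_{r}$ is immediate from the min--max characterization of $\lam_{1}$: for every $v \in \BC^{r}$,
$$
v^{\star} h(x)\, v \;\ge\; \lam_{1}(x)\,|v|^{2} \;\ge\; \frac{\det h(x)}{(rC)^{r-1}}\,|v|^{2},
$$
and measurability of this map is inherited from that of $h$ and $\det h$.

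For part (2), the strategy is to apply part (1) to the dual metric $h^{\star} = {}^{t}h^{-1}$ on $E^{\star}$, which is negatively curved by Definition \ref{curv}(3). Denoting by $\mu_{1}(x) \le \ldots \le \mu_{r}(x)$ the eigenvalues of $h^{\star}(x)$, part (1) produces a constant $C' > 0$ with $\mu_{r}(x) \le rC'$ and $\mu_{1}(x) \ge \det h^{\star}(x)/(rC')^{r-1}$ on $U$. At every $x \in U \sm \{\det h^{\star} = 0\}$ the matrix $h(x)$ is invertible, so $\lam_{i}(x) = 1/\mu_{r+1-i}(x)$ and $\det h(x) = 1/\det h^{\star}(x)$ (the latter using that $\det h$ is real); substituting yields $\lam_{1}(x) \ge 1/(rC')$ and $\lam_{r}(x) \le (rC')^{r-1}\det h(x)$. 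The only mildly delicate point in the whole argument is the forced restriction to $\{\det h^{\star} \ne 0\}$ in part (2), without which the reciprocal relation between the spectra of $h$ and $h^{\star}$ would be meaningless; this restriction is, however, harmless, as that locus has measure zero by the definition of a singular Hermitian metric.
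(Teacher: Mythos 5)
Your proof is correct and follows essentially the same route as the paper's: uniform bound on the entries via plurisubharmonicity of the diagonal entries (Remark \ref{a.e.}) plus pointwise Cauchy--Schwarz, then the elementary inequalities $\lam_r \le rC$ and $\det h \le \lam_1 \lam_r^{r-1}$, and finally dualization for part (2). The only cosmetic difference is that you bound $\lam_r$ by the trace where the paper quotes the general bound $|\lam| \le rC$ for an arbitrary $r\times r$ matrix with entries $\le C$ in modulus; both give the same constant.
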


\begin{proof}
The first assertion of (1) is a direct consequence of our remark \ref{a.e.}.
Given a matrix $A=(a_{ij}) \in M(r,\BC)$, we denote by $C:=\max\{|a_{ij}|;\  1\le i,j\le r\}$, 
the maximum of the absolute value of its entries.
Then we have $|\lam|\le rC$ for any eigenvalue of $A$.
Note $\det h=\lam_{1}\lam_{2}\ldots\lam_{r}\le \lam_{1}(rC)^{r-1}$ at every $x\in U$.

(2) Take dual and apply (1) as long as $\det h^{\star}\ne 0$.
Even if $\det h^\star(x)=0$, we can say $\lam_{1}(x)\ge \frac1{rC'}$ and $\lam_{r}(x)= (rC')^{r-1} \det h(x)=+\infty$ under some convention as in the case of line bundles.
It can be $\lam_{1}(x)=+\infty$ in general for $x\in \{\det h^\star=0\}$.
\end{proof}

\subsection{Global properties}

\noindent We consider in this subsection a holomorphic vector bundle $E$ of rank $r$ on a complex manifold $X$.
 
\begin{dfn}\label{sHm2}
(1) 
A {\it singular Hermitian metric} $h$ on $E$ is a collection of singular Hermitian metrics on local trivializations $E|_{U_{\lam}}\cong U_{\lam}\times \BC^{r}$ satisfying the gluing condition as the one in the usual case.
\smallskip

\noindent (2) 
A singular Hermitian metric $h$ on $E$ is {\it negatively curved} (or $h$ has Griffiths semi-negative curvature), if $\log |v|_{h}^{2}$ 
is psh for any local section $v$ of $E$; $h$ is {\it positively curved} (or $h$ has Griffiths semi-positive curvature), if $h^{\star}={}^{t}h^{-1}$ defines a singular Hermitian metric on $E^{\star}$ with negative curvature.
\smallskip

\noindent (3)
A vector bundle $E$ is negatively (resp.\ positively) curved, if it admits a negatively (resp.\ positively) curved singular Hermitian metric $h$. Unless explicitly stated otherwise, the metric $h$ is allowed to be singular in the sense of (1) of the present definition. \qed \end{dfn}

\medskip

\noindent 
We suppose as an example that $X$ is projective, and $E = A\oplus B$, where $A$ is a positive line bundle and $B$ is a negative line bundle.
As a singular Hermitian metric of $E$ in the weaker sense \cite[p.\,357]{BPDuke}, we can take a direct sum of Hermitian metric $h=h_{A}\oplus h_{B}$ with $h_{A}\equiv 0$ and $h_{B}$ smooth and negative curvature.
Then $h$ is ``negatively curved''.
This is a good indication that indeed one should add the condition $\det h>0$ a.e.\,in the definition of singular Hermitian metric.
\smallskip

\noindent We will start next a systematic study of the properties of negatively curved vector bundles
as above. Our first statement here concerns the 
behavior with respect to inverse images.

\begin{lem}\label{pullback}
Let $f:Y\to X$ be a holomorphic surjective map between two complex manifolds, and let $E$ be a vector bundle on $X$.
\smallskip

\noindent {\rm (1)} We assume that $E$ admits a singular Hermitian metric $h$ with negative (resp.\ positive) curvature. Then $f^{\star}h$ is a singular Hermitian metric on $f^\star (E)$, and it is negatively (resp.\ positive) curved.
\smallskip

\noindent {\rm (2)}
Suppose moreover that $f$ is proper.
Let $X_{1}$ be a non-empty Zariski open subset, and let $Y_{1}=f^{-1}(X_{1})$ be its inverse image.
We consider $h_{1}$ a singular Hermitian metric on $E_{1}=E|_{X_{1}}$.
We assume that the singular Hermitian metric $f^{\star}h_{1}$ on $f^{\star}E_{1}$ extends as a singular Hermitian metric $h_{Y}$ on the inverse image bundle $f^{\star}E$ with negative (resp.\ positive) curvature. Then $h_{1}$ extends as a singular Hermitian metric on $E$ with negative (resp.\ positive) curvature.
In particular, if we have $X_{1}=X$ then the metric $h_{1}$ is negative (resp.\ positive) curved if and only if $f^{\star}h_{1}$ is.
\end{lem}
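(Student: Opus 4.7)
For part (1) I begin with the negative-curvature case. Fix $y_0\in Y$ and choose a trivializing polydisc $\Omega\subset X$ for $E$ containing $f(y_0)$. Lemma \ref{appr} provides on a smaller polydisc a decreasing sequence of smooth Griffiths semi-negative Hermitian metrics $\{h_\nu\}$ converging pointwise to $h$. A direct computation of the curvature tensor shows that the pullback of a smooth Griffiths semi-negative Hermitian metric under any holomorphic map is smooth and Griffiths semi-negative, so each $f^\star h_\nu$ enjoys these properties, and the sequence decreases pointwise to $f^\star h$. For any holomorphic section $v$ of $f^\star E$ on an open subset, $\log|v|^2_{f^\star h_\nu}$ is psh and decreases to $\log|v|^2_{f^\star h}$, which is therefore psh. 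The condition $0<\det(f^\star h)<+\infty$ a.e.\ follows from $\det(f^\star h)=(\det h)\circ f$ combined with Lemma \ref{det} (so $\log\det h$ is psh with pluripolar $-\infty$ locus, pulled back to a pluripolar set in $Y$) and Lemma \ref{eigenbd}(1) (local boundedness of the entries). The positive-curvature case is immediate from the identity $(f^\star h)^\star = f^\star(h^\star)$ on $E^\star$.

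For part (2) I first handle the final assertion ($X_1=X$) in the negative-curvature case. By Sard's theorem applied to the surjective holomorphic map $f$, the critical set $C\subset Y$ is a proper analytic subset and $f(C)$ has Lebesgue measure zero; Remmert's proper mapping theorem ensures $f(C)\subset X$ is analytic, so $X_0:=X\setminus f(C)$ is Zariski open and dense, and $f$ is a submersion over $X_0$. For any local section $v$ of $E$ on an open $U\subset X$ (inside a trivializing chart), the identity $\log|f^\star v|^2_{h_Y}=\log|v|^2_{h_1}\circ f$ holds on $f^{-1}(U)$, the left-hand side being psh by the semi-negativity hypothesis on $h_Y$. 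Since a holomorphic submersion is locally a projection, psh-descent along the projection yields that $\log|v|^2_{h_1}$ is psh on $U\cap X_0$. Lemma \ref{eigenbd}(1) applied to $h_Y$ together with the properness of $f$ provides a local upper bound for $\log|v|^2_{h_1}$ on $U$, so the classical removable-singularity theorem for psh functions extends it uniquely as a psh function on $U$; combining with Remark \ref{a.e.}(2), the metric $h_1$, up to a measure-zero modification, is negatively curved.

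For the general case of part (2), fix a local trivialization of $E$ on $U\subset X$. Lemma \ref{eigenbd}(1) applied to $h_Y$ on $f^{-1}(U)$ combined with the properness of $f$ gives local upper bounds for the entries of $h_1$ near $X\setminus X_1$. Applying the submersion-descent argument above to $f|_{Y_1}:Y_1\to X_1$ shows that $\log|v|^2_{h_1}$ is psh on $U\cap X_1\cap X_0$ for each local section $v$ of $E$; this is a Zariski open dense subset of $U$ whose complement is a proper analytic subset, and the function is locally bounded above there, so removable singularity furnishes a unique psh extension to $U$. Specializing $v$ to $e_i$, $e_i+e_j$, and $e_i+\ai e_j$ recovers the extension of each matrix entry of $h_1$ as a measurable function on $U$, yielding the desired singular Hermitian metric on $E$ with Griffiths semi-negative curvature. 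The positive-curvature case then follows by passing to the dual, using that restriction, pullback, and the matrix inversion $h\mapsto {}^th^{-1}$ all commute. The main technical point is the descent of pluri-subharmonicity from $Y$ to $X$ across the non-submersion locus of $f$, which is handled by combining Sard's theorem, Remmert's proper mapping theorem, and the classical removable-singularity theorem for psh functions.
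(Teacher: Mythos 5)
Your proof is correct and follows essentially the same two-step strategy as the paper: for part (1), pull back a decreasing smooth approximating sequence from Lemma \ref{appr} and pass to the limit; for part (2), deduce an upper bound for $\log|u|^2_{h_1}$ from the plurisubharmonicity of $\log|f^\star u|^2_{h_Y}$ and properness of $f$, then extend across $X\setminus X_1$ by the removable-singularity theorem for psh functions.

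The one place where you are noticeably more careful than the paper is the descent of plurisubharmonicity. The paper's own proof of (2) simply refers to ``the psh function $\log|u|^2_{h_1}$'' on $U\cap X_1$ without explaining why it is psh, whereas the hypothesis only asserts that $h_Y$ (hence $f^\star h_1$) is negatively curved \emph{upstairs}. You fill this gap by producing, via the analyticity of the critical locus, Remmert's proper mapping theorem, and Sard, a non-empty Zariski open $X_0\subset X$ over which $f$ is a submersion; there the pulled-back function descends via local holomorphic sections, and then the boundedness and removable-singularity step that both you and the paper use finishes the argument. You also explicitly verify the a.e.\ condition $0<\det(f^\star h)<\infty$ in part (1), which the paper leaves tacit. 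So the route is the same, but yours is written at a level of rigor that closes a small gap the paper elides; since in the paper's only application of (2) the metric $h_1$ is already known to be positively curved on $X_1$ (it arises from a semi-stable reduction), the terseness there is harmless, but your version proves the lemma as literally stated.
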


\noindent We remark that the statement above holds even if we do not assume that the map $f$ is surjective, provided that the pull-back $f^{\star}h$ is well-defined.

\begin{proof}
We will discuss the negatively curved case.
\smallskip

\noindent (1) 
Let $U$ be a coordinate open subset in $X$ such that $E|_{U} \cong U \times \BC^{r}$, and $U' \subset$ a smaller open subset so that an approximation result \ref{appr} holds for $h$.
Let $\{h_{\nu}\}$ be a decreasing sequence towards $h$ as in \ref{appr}.
We consider an open subset $V \subset f^{-1}(U')$ and a non-zero section $v \in H^{0}(V,f^{\star}E)$.
Then the sequence $\{\log |v|_{f^{\star}h_{\nu}}^{2}\}_{\nu}$ is decreasing to $\log |v|_{f^{\star}h}^{2} \not\equiv -\infty$.
Since $f^{\star}h_{\nu}$ is smooth and negatively curved, $\log |v|_{f^{\star}h_{\nu}}^{2}$ is psh.
Thus so is its limit $\log |v|_{f^{\star}h}^{2}$. 
\smallskip

\noindent (2)
Let $u\in H^{0}(U,E)$ be any local section.
Then $f^{\star}u\in H^{0}(f^{-1}(U), f^{\star}E)$.
Then by assumption, $\log |f^{\star}u|_{h_{Y}}^{2}$ is (a.e.)\ psh on $f^{-1}(U)$.
In particular $\log |f^{\star}u|_{h_{Y}}^{2}$ is bounded from above on any relatively compact subset of $f^{-1}(U)$.
For any $x \in U\cap X_{1}$ and a point $y \in f^{-1}(x)$, we have 
$|f^{\star}u|_{h_{Y}}^{2}(y) = |u|_{h_{1}}^{2}(x)$.
Thus we see that the psh function $\log |u|_{h_{1}}^{2}$ a-priori defined only on $U\cap X_{1}$ is bounded from above on $U' \cap X_1$, where $U'\subset U$ is any relatively compact subset. 
Hence, it can be extend as a psh function on $U$, 
so the metric $h_1$ indeed admits an extension $h$ on $X$ in such a way that the Hermitian bundle $(E, h)$ is negatively curved.
\end{proof}

\noindent The next statements show that the tensor bundles associated to $E$ and the quotient bundles of $E$ inherits its 
positivity properties.

\begin{lem}
Let $(E, h_E)$ is positively curved vector bundle. 
Then the symmetric power $S^{m}E$ and the wedge product $\Lambda^{q}E$ are positively curved.
\end{lem}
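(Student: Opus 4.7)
The plan is to reduce to the smooth case via the approximation result \ref{appr}, and exploit the classical fact that symmetric and exterior powers preserve Griffiths negativity.

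First I would pass to the dual: since $(E,h_{E})$ is positively curved, $(E^{\star},h_{E}^{\star})$ is negatively curved by Definition \ref{sHm2}(2). Using the natural isomorphisms $(S^{m}E)^{\star}\simeq S^{m}(E^{\star})$ and $(\Lambda^{q}E)^{\star}\simeq \Lambda^{q}(E^{\star})$ together with their induced metrics, the statement reduces to showing: if $(F,h_{F})$ is negatively curved, then the induced singular Hermitian metrics $S^{m}h_{F}$ on $S^{m}F$ and $\Lambda^{q}h_{F}$ on $\Lambda^{q}F$ are again negatively curved.

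The question is local, so I would work on a polydisc $U\subset X$. By Lemma \ref{appr}, after shrinking slightly, there is a sequence of smooth Hermitian metrics $\{h_{F,\nu}\}_{\nu}$ on $F$, each with Griffiths semi-negative curvature, decreasing pointwise to $h_{F}$. In the smooth case the induced metrics $S^{m}h_{F,\nu}$ and $\Lambda^{q}h_{F,\nu}$ are Griffiths semi-negative as well: indeed the tensor power $h_{F,\nu}^{\otimes m}$ on $F^{\otimes m}$ is Griffiths semi-negative (since tensor products preserve Griffiths negativity), and $S^{m}F$ and $\Lambda^{q}F$ sit inside $F^{\otimes m}$ as holomorphic sub-bundles via (anti)symmetrization, while holomorphic sub-bundles of a Griffiths negative bundle inherit Griffiths negativity. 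The induced metrics from this inclusion agree with $S^{m}h_{F,\nu}$ and $\Lambda^{q}h_{F,\nu}$ up to the standard combinatorial normalization constants, which is harmless.

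Next I would pass to the limit: for any local holomorphic section $v$ of $S^{m}F$ (resp.\ $\Lambda^{q}F$) over an open $V\subset U$, the functions $\log |v|^{2}_{S^{m}h_{F,\nu}}$ (resp.\ $\log |v|^{2}_{\Lambda^{q}h_{F,\nu}}$) are plurisubharmonic and decrease pointwise to $\log |v|^{2}_{S^{m}h_{F}}$ (resp.\ $\log |v|^{2}_{\Lambda^{q}h_{F}}$), which yields plurisubharmonicity of the limit by the standard convergence theorem, provided it is not identically $-\infty$.

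The step I expect to require the most care is verifying the non-degeneracy condition $0<\det(S^{m}h_{F})<\infty$ and $0<\det(\Lambda^{q}h_{F})<\infty$ almost everywhere, imposed by Definitions \ref{sHm} and \ref{sHm2} in order that these objects be honest singular Hermitian metrics and that the descending limits above not be identically $-\infty$. Since $\det(S^{m}h_{F})$ and $\det(\Lambda^{q}h_{F})$ are positive integral powers of $\det h_{F}$ (with exponents coming from standard representation-theoretic formulas), the hypothesis $0<\det h_{F}<\infty$ a.e.\ from Definition \ref{sHm} transfers directly to the induced metrics. The required local upper bound $|v|^{2}_{S^{m}h_{F}}<\infty$ a.e.\ then follows from the entry-wise bounds for $h_{F,\nu}$ supplied by Lemma \ref{eigenbd}(1), combined with the monotone convergence $h_{F,\nu}\searrow h_{F}$.
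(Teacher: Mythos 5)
Your proof is correct and follows the same strategy the paper itself indicates (dualize, apply the approximation Lemma \ref{appr} to reduce to the smooth Griffiths-negative case, pass to the decreasing limit, and check the determinant condition); the paper's own proof simply cites \ref{det} and \ref{appr} and declares the smooth case well-known, so your argument fills in exactly the details the authors chose to omit.
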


\begin{proof}
The mechanism of the argument is the same as in \ref{det} (the proof of \cite[Proposition 1.1(ii)]{Raufi1}), which shows $(\det E, \det h)$ is positive.
Basically the statement is well-known for the smooth case, and we are using \ref{appr} in order to 
obtain the general case. We will not provide further details here.
\end{proof}

\begin{lem}\label{subquot}
Let $h$ be a singular Hermitian metric on $E$.
\smallskip

\noindent {\rm (1)} Let $S \subset E$ be a subbundle.
Then the restriction $h_S:=h|_S$ defines a singular Hermitian metric on $S$, and $h_S$ is negatively curved if $h$ is.
\smallskip

\noindent {\rm (2)}
 Let $E\to Q$ be a quotient vector bundle.
Suppose that $h$ is positively curved.
Then $Q$ has a naturally induced singular Hermitian metric $h_Q$ with positive curvature.
\end{lem}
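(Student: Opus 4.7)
The statement splits naturally into two parts, and my plan is to handle (1) directly and to reduce (2) to (1) via duality.

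For part (1), I would first check that $h_{S}$ qualifies as a singular Hermitian metric in the sense of Definition \ref{sHm2}, and then verify Griffiths semi-negativity. In a local trivialization I choose a holomorphic frame $e_{1},\ldots,e_{s}$ of $S$ and extend it to a holomorphic frame $e_{1},\ldots,e_{r}$ of $E$; in these frames, $h_{S}$ is the upper-left $s\times s$ principal block of the matrix $h$, so its measurability and the fact that it takes values in $H_{s}$ are immediate. The delicate point is the non-degeneracy condition $0<\det h_{S}<+\infty$ almost everywhere. For the lower bound, at every point where $\det h>0$ (which is almost every point, by the definition of $h$) the matrix $h$ is positive definite, hence its restriction to the span of $e_{1},\ldots,e_{s}$ is positive definite as well, so $\det h_{S}>0$ there. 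For the upper bound I appeal to Lemma \ref{eigenbd}(1): since $h$ is negatively curved, its entries are locally bounded on any relatively compact open subset, hence so are the entries of $h_{S}$, and therefore $\det h_{S}$ is locally bounded.

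The negative curvature of $h_{S}$ is then essentially automatic: if $v\in H^{0}(U,S)$, then $v$ is also a local section of $E$ under the inclusion $S\hookrightarrow E$, and $|v|_{h_{S}}^{2}=|v|_{h}^{2}$ by definition of the restriction. The right-hand side has plurisubharmonic logarithm by the hypothesis that $h$ is Griffiths semi-negative, hence $\log|v|_{h_{S}}^{2}$ is psh, which is the definition of negative curvature for $h_{S}$.

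For part (2), I plan to dualize. The surjection $E\to Q$ corresponds to a subbundle inclusion $Q^{\star}\hookrightarrow E^{\star}$. By Definition \ref{sHm2}(2), Griffiths semi-positivity of $h$ means precisely that $h^{\star}={}^{t}h^{-1}$ is a negatively curved singular Hermitian metric on $E^{\star}$. Applying part (1) to the pair $(Q^{\star}\subset E^{\star},\,h^{\star})$, the restriction $h^{\star}|_{Q^{\star}}$ is a negatively curved singular Hermitian metric on $Q^{\star}$. Dualizing once more, $h_{Q}:=(h^{\star}|_{Q^{\star}})^{\star}$ is a singular Hermitian metric on $Q\cong Q^{\star\star}$ with positive curvature, which is what we want; the construction is canonical because each operation (duality, restriction) is.

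The main obstacle is really the determinant condition in (1): without invoking the local boundedness from Lemma \ref{eigenbd}(1), the restriction would only give a positive semi-definite Hermitian form on $S$ rather than a singular Hermitian metric in the sense of Definition \ref{sHm}. Once that boundedness is in hand, everything else, including the reduction in (2), is essentially formal duality together with the observation that each of the two dualizations is well-defined on the full measure subset where the respective determinant is positive.
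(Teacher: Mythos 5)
Your proposal is correct and takes essentially the same route as the paper, which handles Lemma~\ref{subquot} by deferring to the torsion-free version Lemma~\ref{subquot2}: part (1) via the identity $|v|_{h_S}^2=|v|_h^2$ for $v\in H^0(U,S)$, and part (2) (through the more general \ref{subquot2}(3)) by dualizing, applying (1) to the subbundle $Q^{\star}\hookrightarrow E^{\star}$, and dualizing back. One small caveat: invoking Lemma~\ref{eigenbd}(1) for the upper bound $\det h_S<+\infty$ is unnecessary and, strictly speaking, slightly off-target, since the first assertion of (1) is meant to hold for an arbitrary singular Hermitian metric (no curvature hypothesis), whereas \ref{eigenbd}(1) requires $h$ negatively curved; in fact no appeal is needed because $h_S(x)\in H_s$ already consists of finite semi-positive Hermitian matrices, so $\det h_S(x)$ is automatically finite at every point, and only the a.e.\ lower bound $\det h_S>0$ requires the positive-definiteness argument you give.
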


\noindent We will not present here the arguments for the \ref{subquot}, since we will
deal with this kind of statements in a more general context in \ref{subquot2}.
We rather discuss next the connection between \ref{sHm2} and the notion of positivity in algebraic geometry.

\begin{prop}\label{Gp imply wp}
Let $\pi : \BP(E) \to X$ be the projective space bundle, and let $\pi^{\star}E \to \CO_{\BP(E)}(1)$ be the universal quotient line bundle.
Suppose $E$ admits a positively curved singular Hermitian metric $h$.
Then
\smallskip

\noindent {\rm (1)} The bundle
 $\CO_{\BP(E)}(1)$ has a \emph{(naturally induced)} singular Hermitian metric $g$ whose curvature current is semi-positive.
\smallskip

\noindent {\rm (2)} Let $\det h$ be the induced singular Hermitian metric of $\det E$ with semi-positive curvature.
Let $U\subset \BP(E)$ and $W \subset X$ be coordinate neighborhoods respectively such that $U \subset \pi^{-1}(W)$, and regard $g|_{U}$ and $\det h|_{W}$ as a function on $U$ and $W$ respectively.
Then 
$$
	g \, \le \, C\,\pi^{\star}(\det h|_{W})
$$ 
holds on $U$ for a constant $C=C_{U,W}>0$.
\smallskip

\noindent {\rm (2$^\prime$)} 
In particular, if $\det h(x)<+\infty$ at a point $x\in X$, then the restriction $g|_{\BP(E_{x})}$ is well-defined and the multiplier ideal sheaf satisfies $\CI(g^{k}|_{\BP(E_{x})}) =\CO_{\BP(E_{x})}$ for any $k>0$, where $\BP(E_{x})=\pi^{-1}(x)$.
We can express this equality in global terms, namely $\pi(V\CI(g^{k})) \subset V\CI((\det h)^{k})$ holds for any $k>0$, where $V\CI(g^{k})\subset \BP(E)$ is the complex subspace defined by the multiplier ideal sheaf $\CI(g^{k})$.
\smallskip

\noindent {\rm (3)} 
If $X$ is projective, then the inclusion 
$$\mathbf{B}_{-}(\CO_{\BP(E)}(1)) \subset \cup_{k>0}V\CI(g^{k})$$ 
holds, where $\mathbf{B}_{-}(\CO_{\BP(E)}(1))$ is the restricted base locus of $\CO_{\BP(E)}(1)$
introduced in \cite[\S1]{ELMNP06}. 
In particular, $E$ is pseudo-effective in the sense of \cite[6.1]{DPS}, and therefore $E$ is weakly positive in the sense of Viehweg.
\end{prop}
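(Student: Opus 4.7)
The strategy is to realize $g$ as the dual of a subbundle metric on $\CO_{\BP(E)}(-1)\subset \pi^{\star}E^{\star}$, use the eigenvalue bounds of Lemma~\ref{eigenbd} to compare $g$ with $\pi^{\star}\det h$, and then deduce the algebraic consequence via the known comparison between the restricted base locus and multiplier ideal zero loci. For (1) specifically: since $h$ is positively curved by assumption, $h^{\star}$ is negatively curved on $E^{\star}$, and Lemma~\ref{pullback}(1) shows that $\pi^{\star}h^{\star}$ is a negatively curved singular Hermitian metric on $\pi^{\star}E^{\star}$. In the Grothendieck convention used here, the tautological inclusion $\CO_{\BP(E)}(-1)\hookrightarrow \pi^{\star}E^{\star}$ is a subbundle, so by Lemma~\ref{subquot}(1) the restricted metric on $\CO_{\BP(E)}(-1)$ is negatively curved; its dual $g$ is then the positively curved singular Hermitian metric on $\CO_{\BP(E)}(1)$ required by (1).

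For (2), I would trivialize $E|_{W}\cong W\times \BC^{r}$ with frame $e_{1},\dots,e_{r}$ and dual frame $e_{1}^{\star},\dots,e_{r}^{\star}$, and pick a relatively compact affine chart $U\subset \pi^{-1}(W)$ on which the tautological section $t=\sum \xi_{i}e_{i}^{\star}$ of $\CO_{\BP(E)}(-1)$ has Euclidean norm bounded below by a positive constant. The dual trivializing section $s$ of $\CO_{\BP(E)}(1)$ satisfies $|s|_{g}^{2}=1/|t|_{\pi^{\star}h^{\star}}^{2}$. Lemma~\ref{eigenbd}(2) applied to $h$ gives a lower bound $\lambda_{\min}(h^{\star}(x))\ge (rC')^{1-r}(\det h(x))^{-1}$ on relatively compact subsets of $W$, so
\[
|t|^{2}_{\pi^{\star}h^{\star}}\;\ge\;\lambda_{\min}(h^{\star})\,|\xi|^{2}\;\ge\;\frac{c}{\det h(x)},
\]
and inverting yields $g\le C\,\pi^{\star}(\det h|_{W})$ on $U$. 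Part (2$'$) then follows at once: finiteness of $\det h(x)$ bounds $g|_{\BP(E_{x})}$ by a finite constant on every affine chart of the fiber, so all multiplier ideals $\CI(g^{k}|_{\BP(E_{x})})$ are trivial, and the stated global inclusion of zero loci is just a rephrasing of the pointwise weight inequality.

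For (3), I would invoke the general fact that on a projective manifold, for any pseudo-effective line bundle $L$ equipped with a singular Hermitian metric $g$ of semi-positive curvature, one has $\mathbf{B}_{-}(L)\subset \bigcup_{k>0}V\CI(g^{k})$. This follows from the identification of the restricted base locus with the analytic non-nef locus computed from the minimal singular metric $h_{\min}$ (cf.~\cite{ELMNP06}), together with the inclusion $\CI(g^{k})\subset \CI(h_{\min}^{k})$ coming from the fact that $g$ is at least as singular as $h_{\min}$. Applied to $(\CO_{\BP(E)}(1),g)$ and combined with (2$'$), this gives $\pi(\mathbf{B}_{-}(\CO_{\BP(E)}(1)))\subset \bigcup_{k}V\CI((\det h)^{k})$, and since $\log \det h\in L^{1}_{\mathrm{loc}}(X,\BR)$ by Lemma~\ref{det}, the right-hand side is a proper analytic subset of $X$. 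Hence the negative-intersection locus of $\CO_{\BP(E)}(1)$ is contained in a subvariety not projecting onto $X$, which is exactly the pseudo-effectivity condition of \cite[6.1]{DPS}; the equivalence \cite[6.3]{DPS} then gives the weak positivity of $E$ in the sense of Viehweg.

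The main obstacle I anticipate is Part (3): the inclusion $\mathbf{B}_{-}(\CO_{\BP(E)}(1))\subset\bigcup_{k}V\CI(g^{k})$ is a non-trivial analytic/algebraic comparison, and the verification that the singularities of the constructed $g$ are dominated by those of $h_{\min}$ (so that the multiplier-ideal inclusion goes the right way) requires some care. The computation in (2), while routine, is also sensitive to the $\BP(E)$-convention and to the direction of the eigenvalue bounds in Lemma~\ref{eigenbd}.
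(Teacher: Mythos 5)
Your argument for (1), (2) and (2$'$) is essentially the paper's: part (1) is identical (dualize, pull back, restrict via \ref{subquot}, dualize again), and your eigenvalue estimate in (2) applies Lemma~\ref{eigenbd}(2) directly to $h$ where the paper applies \ref{eigenbd}(1) to $h^{\star}$ — these are the same bound written on opposite sides of the duality, and both land on $g\le C\,\pi^{\star}\det h$ with the constant coming from a lower bound on the Euclidean norm of the tautological section over a relatively compact chart.

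For (3), the paper merely points to ``the usual techniques,'' and your filling-in is workable but takes an unnecessary detour through $h_{\min}$. You correctly note that $g$ is at least as singular as any minimal-singularity metric, hence $\CI(g^{k})\subset\CI(h_{\min}^{k})$ and $V\CI(g^{k})\supset V\CI(h_{\min}^{k})$; but to conclude you then need $\mathbf{B}_{-}(\CO(1))\subset\bigcup_{k}V\CI(h_{\min}^{k})$, and the only proof of that I know is exactly the Nadel-vanishing/Ohsawa--Takegoshi argument that one would run directly with $g$: if $\CI(g^{k})_{p}=\CO_{p}$ for all $k$, then for any ample $A$ one produces, for $m\gg0$, a section of $m(\CO(1)+A)$ not vanishing at $p$, so $p\notin\mathbf{B}(\CO(1)+A)$ for every ample $A$. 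Running this with $g$ itself gives $\mathbf{B}_{-}(\CO(1))\subset\bigcup_{k}V\CI(g^{k})$ in one step and avoids any appeal to the (subtler, and not purely in \cite{ELMNP06}, which is algebraic) identification of $\mathbf{B}_{-}$ with a Lelong-number locus of $h_{\min}$. Also, in your closing paragraph the phrase ``the singularities of the constructed $g$ are dominated by those of $h_{\min}$'' has the inequality backwards relative to what you correctly wrote above (and what you need): $g$ must be \emph{at least} as singular as $h_{\min}$, so $\CI(g^{k})\subset\CI(h_{\min}^{k})$.
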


\begin{proof}
\smallskip

\noindent {\rm (1)} 
We will use the notation $\CO(1)$ instead of $\CO_{\BP(E)}(1)$. Our first remark is that 
$\pi^{\star}h$ defines a singular Hermitian metric on $\pi^\star E$ with positive curvature 
(cf. \ref{pullback}). By the property \ref{subquot}, the metric $g^{\star}$ on $\CO(-1)$
induced by the bundle injection $\CO(-1) \to \pi^{\star}E^{\star}$
has negative curvature, hence $\big(\CO(1), g\big)$ is positively curved in the sense of currents,
where $g$ is the dual of the metric $g^\star$.

\smallskip

\noindent {\rm (2)} 
We may suppose that $X$ is a coordinate open set, and that $\det E =X\times \BC$ is trivial. Then we write $\det h=e^{-\vph_{h}}$ where $\vph_{h}$ is a psh function. In this case we also have $\BP(E)=X\times \BP^{r-1}$ where $r=\rank E$.
We consider the set $U:=X\times U_{0}$, where $U_{0}\subset \BP^{r-1}$ is a coordinate open 
set.
Since $h^{\star}$ and $g^{\star}$ are negatively curved, they are bounded in the sense of 
\ref{eigenbd}(1).
Let $x\in X$ be an arbitrary point.
The metric $g^{\star}$ is defined as the restriction of $\pi^{\star}h^{\star}$ to a 1-dimensional linear subspace, therefore we have $\lam_{1}(g^{\star},p)\ge \lam_{1}(\pi^{\star}h^{\star},p)$ for any $p\in \BP(E_{x})\cap U$, where $\lam_{1}(g^{\star},p)$ stands for the smallest eigenvalue of $g^{\star}$ at $p$ (and $\lam_{1}(\pi^{\star}h^{\star},p)$ for $\pi^{\star}h^{\star}$).
We have $\lam_{1}(g^{\star},p)=e^{\psi(p)}$, and $\lam_{1}(\pi^{\star}h^{\star},p)=\lam_{1}(h^{\star},x)\ge \frac1{(rC)^{r-1}}\det h^{\star}(x)$ by \ref{eigenbd} (1), where $C>0$ is a constant independent of $x\in X$.
Thus we have $e^{\psi(p)}\ge \frac1{(rC)^{r-1}}e^{\vph_{h}(x)}$ for any $p\in \BP(E_{x})\cap U$.

\smallskip

\noindent {\rm (2$^\prime$)} 
The first assertion is clear from (2).
For the second, what we proved in (2) above is that $g=e^{-\psi}g_{1}\le (rC)^{r-1} e^{-\pi^{\star}\vph_{h}}$ on $\BP(E)=X\times \BP^{r-1}$ (up to -irrelevant- choices of local trivializations). Since $\pi:\BP(E)\to X$ is a product, our claim is a consequence of Fubini theorem.
\smallskip

\noindent {\rm (3)} 
The pseudo-effectivity of $E$ follows from the properties (1)--(2$^\prime$), combined with the usual techniques in algebraic/analytic geometry.
Then the weak positivity follows from \cite[6.3]{DPS} as a consequence.
\end{proof}

\begin{rem} 
It does not seem to be know wether the weak positivity of $E$ implies the Griffiths semi-positivity 
in the sense of \ref{sHm2}
(this would be a singular version of Griffiths conjecture). 
\qed
\end{rem}
\medskip

\subsection{Metrics on torsion free sheaves}

It turns out that very interesting objects, such as direct images of adjoint bundles $\CE_m:= f_{\star}(mK_{X/Y}+L)$ do not always have a vector bundle structure; nevertheless,
we would like to have a notion of ``singular Hermitian metric" on $\CE_m$.
More generally, we introduce in this subsection a notion of (metric) positivity 
for torsion free sheaves. It turns out that
the theory is essentially the same as the vector bundle case.
\smallskip

\noindent Let $\CE$ be a torsion free (always coherent) sheaf on a complex manifold $X$. 
We will denote by $W_{\CE}\subset X$ the maximum Zariski open subset of $X$ such that
the restriction of $\CE$ to $W_{\CE}$ is locally free.
Then we have $\codim (X\sm W_{\CE}) \ge 2$, thanks to which the following definition is
meaningful.

\begin{dfn}\label{sHm3}
(1)
A {\it singular Hermitian metric} $h$ on $\CE$ is a singular Hermitian metric on the vector bundle $\CE|_{W_{\CE}}$.
\smallskip

\noindent (2)
A singular Hermitian metric $h$ on $\CE$ is {\it negatively curved} (resp.\ {\it a.e.\,negatively curved}), if $h|_{W_{\CE}}$ is negatively curved (resp.\ a.e.\,negatively curved) in the sense of \ref{curv}.
\smallskip

\noindent
(3)
A singular Hermitian metric $h$ on $\CE$ is {\it positively curved}, if there exists a negatively curved singular Hermitian metric $g$ on $\CE^{\star}|_{W_{\CE}}$ such that $h|_{W_{\CE}}=(g|_{W_{\CE}})^{\star}$.
\smallskip

\noindent
(4)
$\CE$ is {\it negatively} (resp.\ {\it positively}) {\it curved}, if it admits a negatively (resp.\ positively) curved singular Hermitian metric.
\qed
\end{dfn}
In what follows, we may equally say that \emph{$h$ has Griffiths semi-positive/negative curvature}
meaning that the sheaf in question has the property (2) or (3) in the definition above.

\begin{rem}\label{dual}
(1)
If $W'\subset W_{\CE}$ is a non-empty Zariski open and $h'$ is a singular Hermitian metric on $\CE|_{W'}$, then $h'$ can be seen as a singular Hermitian metric on $\CE|_{W_{\CE}}$ by assigning the (arbitrary) value $h'\equiv 0$ on $W_{\CE}\sm W'$.
In particular, there is a natural correspondence between the set of singular Hermitian metric on $\CE$ and the set of singular Hermitian metric on $\CE^{\star\star}$, via $\CE|_{W_{\CE}} \cong \CE^{\star\star}|_{W_{\CE}}$.
\smallskip

\noindent (2)
If $h$ is a negatively curved singular Hermitian metric on $\CE$, then for any open $U\subset X$ and any $v \in H^{0}(U,\CE)$, $\log |v|_{h}^{2}$ is psh on $U\cap W_{\CE}$.
Because of $\codim (X\sm W_{\CE}) \ge 2$, $\log|v|_{h}^{2}$ extends as a psh function on $U$ (compare with \ref{curv}). Thus in the case under discussion, 
we have \emph{unique privileged} extension of the metric $h$, once it is defined on the complement of a set
of codimension at least two.
\smallskip

\noindent (3)
In order to obtain a negatively curved singular Hermitian metric on $\CE$, it is enough to find an a.e.\,negatively curved singular Hermitian metric as in \ref{a.e.}(2). 
In conclusion, we see that $\CE$ is negatively (resp.\ positively) curved 
if and only if its associated double dual $\CE^{\star\star}$ is negatively (resp.\ positively) curved. 
\qed
\end{rem}

\begin{lem}\label{subquot2}
Let $h$ be a singular Hermitian metric on $\CE$.
\smallskip

\noindent {\rm(1)} Let $\CS \subset \CE$ be a subsheaf.
Then the restriction $h_\CS:=h|_\CS$ defines a singular Hermitian metric on $\CS$. 
If $h$ is a.e.\,negatively curved, then so is $h_\CS$.
In particular $\CS$ is negatively curved if $\CE$ is, by {\rm \ref{dual} (3)}.
\smallskip

\noindent {\rm (2)} Let $\CE\to \CQ$ be a quotient torsion free sheaf.
Suppose that $h$ is positively curved.
Then $\CQ$ has a naturally induced singular Hermitian metric $h_\CQ$ with positive curvature.
\smallskip

\noindent {\rm (3)} Let $\CF$ be a torsion free sheaf, and suppose there exists a sheaf homomorphism $a : \CE\to\CF$ which is generically surjective.
Suppose that $h$ is positively curved.
Then $\CF$ has a naturally induced singular Hermitian metric $h_{\CF}$ with positive curvature.
\end{lem}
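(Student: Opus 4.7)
The plan is to prove (1) directly by restriction and then derive (2) and (3) from (1) by dualization. For (1), I work on the open set $W := W_{\CS} \cap W_{\CE}$; since $W_{\CS} \setminus W$ is contained in $X \setminus W_{\CE}$, it has complex codimension at least two in $W_{\CS}$ and hence Lebesgue measure zero, so nothing is lost when extending measurable objects back to all of $W_{\CS}$. Pick a local holomorphic frame $e_{1}, \ldots, e_{s}$ of $\CS$ on $W$ (where $s = \rank \CS$) and extend it to a frame $e_{1}, \ldots, e_{r}$ of $\CE$; define $h_{\CS}$ in this frame as the upper-left $s \times s$ principal submatrix of the matrix of $h$. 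Measurability and frame-independence are routine. The essential verification is the non-degeneracy condition $0 < \det h_{\CS} < \infty$ almost everywhere: the upper bound follows from Lemma \ref{eigenbd}(1), which ensures the entries of $h$ are locally bounded; the lower bound follows from the elementary linear-algebra fact that the restriction of a positive definite Hermitian form to a linear subspace is again positive definite, so $\det h_{\CS}(x) > 0$ wherever $h(x)$ is positive definite, which is a.e.\ since $\det h > 0$ a.e. The negative-curvature property is then automatic: any local section $v$ of $\CS$ is also a section of $\CE$, with $|v|_{h_{\CS}}^{2} = |v|_{h}^{2}$, so $\log |v|_{h_{\CS}}^{2}$ inherits its (a.e.)\,plurisubharmonicity from $h$; Remark \ref{dual}(3) then yields that $\CS$ is negatively curved.

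For (2) and (3), the strategy is identical. Applying the contravariant functor $\text{Hom}(-, \CO_{X})$ to the surjection $\CE \twoheadrightarrow \CQ$, respectively to the generically surjective morphism $a : \CE \to \CF$, produces an injection of torsion-free sheaves $\CQ^{\star} \hookrightarrow \CE^{\star}$, respectively $\CF^{\star} \hookrightarrow \CE^{\star}$. In the quotient case this is left-exactness together with the observation that $\text{Hom}(T, \CO_{X}) = 0$ for any torsion sheaf $T$; in the generically surjective case, $a^{\star}$ is generically injective, and its kernel is a torsion-free subsheaf of $\CF^{\star}$ vanishing on a dense open set, hence vanishing identically. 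Since the positive curvature of $h$ on $\CE$ is by definition the negative curvature of $h^{\star}$ on $\CE^{\star}$, part (1) applied to $h^{\star}$ yields a negatively curved metric on $\CQ^{\star}$, respectively on $\CF^{\star}$. Dualizing once more produces a positively curved metric on the double dual $\CQ^{\star\star}$, respectively $\CF^{\star\star}$, which by Remark \ref{dual}(1) corresponds to the desired positively curved singular Hermitian metric $h_{\CQ}$ on $\CQ$, respectively $h_{\CF}$ on $\CF$.

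The main obstacle I expect is the non-degeneracy verification $0 < \det h_{\CS} < \infty$ a.e.\ in part (1): while the pointwise linear algebra is classical, it must be coupled with Lemma \ref{eigenbd}(1) for local boundedness of the entries and with the a.e.\,positive definiteness of $h$ to secure the lower bound. Once this is in place, the remainder consists of essentially formal applications of the duality formalism and Remark \ref{dual}.
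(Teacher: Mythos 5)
Your proposal is correct and follows essentially the same route as the paper: restriction for (1), then dualization reduces (2) and (3) to (1). One imprecision worth flagging: in part (1) you work on $W := W_{\CS} \cap W_{\CE}$ and claim to extend a local frame of $\CS$ to a frame of $\CE$, and later you treat $\CS(x)$ as a linear subspace of $\CE(x)$ to apply the restriction-of-positive-definite-forms argument. Both of these require $\CS$ to be a \emph{subbundle} of $\CE$ at $x$ (equivalently, the fiber map $\CS(x)\to\CE(x)$ injective), which is a strictly stronger condition than both sheaves being locally free; the subbundle locus $W_0\subset W$ generically has codimension-one complement, not codimension two. On $W\setminus W_0$ the pulled-back form is degenerate, so your chain ``$\det h_\CS(x)>0$ wherever $h(x)$ is positive definite'' fails there. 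The paper avoids this by explicitly restricting to a Zariski open where $\CS$ is a subbundle of $\CE$ and then extending the metric using Remark \ref{dual}. Your argument survives because $W\setminus W_0$ is measure zero and the non-degeneracy requirement is only almost everywhere, but the frame-extension and linear-subspace claims should be stated on $W_0$ rather than on $W$. (Also, invoking \ref{eigenbd}(1) for the upper bound on $\det h_\CS$ is more than you need: by the definition of a singular Hermitian metric, $h$ takes values in $H_r$, so its entries and hence $\det h_\CS$ are pointwise finite.) The dualization steps in (2) and (3) match the paper, including the observation that the kernel of $a^\star\colon\CF^\star\to\CE^\star$ is a subsheaf of a torsion-free sheaf vanishing generically, hence zero.
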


\begin{proof}
(1) 
By restricting everything on the maximum Zariski open subset where $\CS$ is locally free, we may assume $\CS$ is locally free.
Let $W$ be a Zariski open subset such that $\CE$ is locally free and $\CS$ is a subbundle of $\CE$.
Then $h_\CS=h|_\CS$ defines a singular Hermitian metric on $\CS$ over $W$, and hence induces a singular Hermitian metric on $\CS$ over $X$ by \ref{dual}.
Let $U$ be an open subset and $v \in H^{0}(U,\CS) \subset H^{0}(U,\CE)$.
Then on $U\cap W$, $|v|_{h_{\CS}}^{2}=|v|_{h}^{2}$ (in the right hand side, $|v|_{h}^{2}$ is measured as local section of the sheaf $\CE$, i.e.\ $v\in H^{0}(U,\CE)$).
If $h$ is a.e.\,negatively curved, then $\log |v|_{h}^{2}$ is a.e.\,psh on $U$, and hence $h_{\CS}$ is a.e.\,negatively curved.
\smallskip 

\noindent (3)
By dualyzing the map $a$, we obtain a sheaf injection $\CF^{\star}\to \CE^{\star}$.
Thanks to the point (1), we obtain a singular Hermitian metric $h^{\star}|_{\CF^{\star}}$ on $\CF^{\star}$ with a.e.\,negative curvature.
The procedure described in \ref{dual} allows us to modify $h^{\star}|_{\CF^{\star}}$ on a Zariski closed set, such that  the result is
a singular Hermitian metric with negative curvature on $\CF^{\star}$.
By taking the dual, we have a singular Hermitian metric $(h^{\star}|_{\CF^{\star}})^{\star}$ (not exactly same, but the discrepancies eventually occur on a measure zero set only) on $\CF^{\star\star}$ with positive curvature.
We then use \ref{dual} again to obtain a singular Hermitian metric on $\CF$.
\end{proof}

\subsection{Griffiths semi-positivity and weak positivity}

\noindent 
We show here that a torsion free sheaf endowed with a singular Finsler metric with positive curvature with a condition on the singular locus is \emph{weakly positive in the sense of Nakayama}. To this end we will
first recall this notion due to Nakayama which is a refinement of the corresponding definition due to Viehweg.

For a torsion free sheaf $\CF$ on a variety $Y$, we denote by $S^{m}(\CF)$ the $m$-th symmetric tensor product of $\CF$ with the convention that $S^{0}(\CF) = \CO_{Y}$, and let $\what{S}^{m}(\CF)$ be the double dual of the sheaf $S^{m}(\CF)$.

\begin{dfn}\label{dd-ample} \cite[V.3.20]{Nbook}
Let $Y$ be a smooth projective variety, and let $\CF$ be a torsion free coherent sheaf on $Y$.
Let $A$ be an auxiliary ample divisor on $Y$.
Then
\smallskip 

\noindent 
(1) $\CF$ is {\it weakly positive at a point $y \in Y$}, if for any integer $a > 0$, there exists an integer $b>0$ such that $\what S^{ab}(\CF)\ot\CO_{Y}(bA)$ is generated by global sections at $y$. 
\smallskip 

\noindent 
(2) $\CF$ is {\it dd-ample at a point $y \in Y$} (``ample modulo double-duals''), if $\what{S}^{m}(\CF)\ot\CO_{Y}(-A)$ is generated by global sections at $y$ for an integer $m>0$.

\smallskip 

\noindent 
(3) $\CF$ is {\it weakly positive} (resp.\ {\it big}), if it is weakly positive (resp.\ dd-ample) at some point.
\qed
\end{dfn}

We note that $\CF$ is weakly positive over a Zariski open subset $U \subset Y$ in the sense of Viehweg \cite[2.13]{Vbook} implies that $\CF$ is weakly positive at every point $y \in U$ in the sense of Nakayama (\cite[p.\,200, Remark (4)]{Nbook}). Thus the weak positivity in the sense of Viehweg 
requires global generation property (1) to hold on a Zariski open subset, whereas in the version due to Nakayama the property (1) above could be verified on countable intersection of Zariski open sets only.

If $\CF$ is a line bundle, we set $\Amp(\CF)=\{y\in Y;\ \CF$ is dd-ample at $y \}$.
The complement $\NAmp(\CF)=Y\sm \Amp(\CF)$ is commonly denoted by
${\bf B}_+(\CF)$ and called the {\it augmented base locus} of $\CF$ (\cite[\S 1]{ELMNP09}).
\medskip

\noindent The following is a generalization of \ref{Gp imply wp}, and its proof goes along the same lines.
So we leave it to the interested reader (see \ref{bigness} for a more general form of this result).

\begin{thm}\label{Gp imply wp2}
Let $Y$ be a smooth projective variety, and let $\CE$ be a torsion free sheaf on $Y$.
Suppose that $\CE$ admits a singular Hermitian metric $h$ with positive curvature.
Then $\CE$ is weakly positive at $y$ (in the sense of Nakayama \ref{dd-ample}), for every $y\in Y$ where $\CE$ is locally free around $y$ and $\det h(y)<+\infty$.
\end{thm}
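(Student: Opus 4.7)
The plan is to reduce the statement to Proposition \ref{Gp imply wp} by two successive simplifications: first replacing $\CE$ by its reflexive hull, and then passing to a locally free model on a smooth birational cover. By Remark \ref{dual}(1), $h$ corresponds canonically to a positively curved singular Hermitian metric on $\CE^{\star\star}$, which I still denote $h$; since $\CE$ is locally free around $y$, the natural map $\CE \to \CE^{\star\star}$ is an isomorphism in an open neighborhood of $y$, the value $\det h(y)$ is unchanged, and $\what S^{m}(\CE) = \what S^{m}(\CE^{\star\star})$ for every $m \ge 0$ as reflexive sheaves. Hence weak positivity of $\CE^{\star\star}$ at $y$ is equivalent to that of $\CE$ at $y$, and we may assume $\CE$ is reflexive.

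Next I would choose a proper birational morphism $\mu : Y' \to Y$ with $Y'$ smooth projective such that $\CE' := \mu^{\star}\CE / \mathrm{torsion}$ is locally free on $Y'$, for instance by taking a log resolution of the Fitting ideals of $\CE$; since $\CE$ is already locally free at $y$, arrange that $\mu$ is an isomorphism over a neighborhood of $y$ and let $y'$ be the unique preimage. By Lemma \ref{pullback} and Lemma \ref{subquot2}(3), $\mu^{\star}h$ induces a positively curved singular Hermitian metric $h'$ on $\CE'$, with $\det h'(y') = \det h(y) < +\infty$. Apply Proposition \ref{Gp imply wp} to $(\CE', h')$ on the smooth projective $Y'$: writing $\pi' : \BP(\CE') \to Y'$ and $g'$ for the induced semi-positively curved metric on $\CO_{\BP(\CE')}(1)$, part (2$^{\prime}$) applied at $y'$ gives $\pi'^{-1}(y') \cap \bigcup_{k>0} V\CI((g')^{k}) = \emptyset$, while part (3) gives $\mathbf{B}_{-}(\CO_{\BP(\CE')}(1)) \subset \bigcup_{k>0} V\CI((g')^{k})$. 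Combining these, the compact fiber $\pi'^{-1}(y')$ is disjoint from the restricted base locus $\mathbf{B}_{-}(\CO_{\BP(\CE')}(1))$.

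It remains to convert this disjointness into the global-generation statement of Definition \ref{dd-ample}(1). Fix an ample $A$ on $Y$ and an auxiliary ample $H_0$ on $Y'$; for $N\gg 0$, $H := \CO_{\BP(\CE')}(1) \otimes \pi'^{\star}\CO_{Y'}(NH_0)$ is ample on $\BP(\CE')$. Because $\pi'^{-1}(y')$ is compact and disjoint from $\mathbf{B}_{-}(\CO(1))$, for every $\epsilon > 0$ there exists $m$ such that $\CO(m) \otimes H^{\otimes \lceil m\epsilon\rceil}$ is globally generated at every point of $\pi'^{-1}(y')$; taking $\pi'_{\star}$ converts this to global generation at $y'$ of $\what S^{m+\lceil m\epsilon\rceil}(\CE') \otimes \CO_{Y'}(N\lceil m\epsilon\rceil H_0)$. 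Using the bigness of $\mu^{\star}A$ to trade a fixed multiple of $H_0$ for a multiple of $\mu^{\star}A$ near $y'$, and letting $m \to \infty$ with $\epsilon \to 0$ and $N$ varying, one produces, for every $a > 0$, an integer $b > 0$ such that $\what S^{ab}(\CE') \otimes \CO_{Y'}(b\mu^{\star}A)$ is globally generated at $y'$. Pushing forward by $\mu$ (which is an isomorphism near $y$) yields global generation of $\what S^{ab}(\CE) \otimes \CO_{Y}(bA)$ at $y$ for every $a$ with a suitable $b$, which is precisely Nakayama's weak positivity of $\CE$ at $y$.

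The hard part is the last combinatorial step: extracting from the inclusion $\pi'^{-1}(y') \cap \mathbf{B}_{-}(\CO(1)) = \emptyset$ the exact tensor form $\what S^{ab}(\CE) \otimes A^{b}$ required by Definition \ref{dd-ample}(1) for \emph{every} $a > 0$. One must choose the ample line bundle $H$ on $\BP(\CE')$ so that its push-forward controls $\mu^{\star}A$ in the correct proportion, and carry out an asymptotic analysis in the parameters $(m, \epsilon, N)$ that attains every prescribed ratio $a$; the big-and-nef (but not necessarily ample) nature of $\mu^{\star}A$ on $Y'$ is precisely what must be handled in this bookkeeping. This is of the same flavor as the proof of the more general Finsler metric criterion \ref{bigness}, to which the paper itself refers for the full technical details.
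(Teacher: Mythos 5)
Your proof takes a genuinely different route from the paper's. The authors apply Theorem \ref{bigness}(1) directly: with $Y_1 = W_\CE$ the locally free locus of $\CE$ (of complement codimension $\geq 2$), Proposition \ref{Gp imply wp}(1) produces the metric $g$ on $\CO_\CE(1)|_{\pi^{-1}(Y_1)}$, and part (2$^\prime$) verifies $\CI(g^k|_{\BP(\CE_y)}) = \CO_{\BP(\CE_y)}$ for $y$ with $\det h(y) < +\infty$, after which \ref{bigness}(1) gives weak positivity at $y$. That theorem was formulated precisely so that the metric need only be supplied over $\pi^{-1}(Y_1)$; the $L^2$-extension argument inside its proof handles the boundary directly and no globally defined Hermitian structure on a compact model is required.

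The step in your argument that fails is the claim that ``$\mu^\star h$ induces a positively curved singular Hermitian metric $h'$ on $\CE'$'' over the birational model $Y'$. The pulled-back dual metric $\mu^\star(h^\star)$ naturally lives on $\mu^\star(\CE^\star)$, which is in general only a proper subsheaf of $(\CE')^\star$ along the $\mu$-exceptional locus: take $\CE$ to be the ideal sheaf of a point $p$ on a smooth projective surface and $\mu$ the blow-up of $p$, so that $\CE' = \CO_{Y'}(-E)$, $(\CE')^\star = \CO_{Y'}(E)$, while $\mu^\star(\CE^\star) = \CO_{Y'}$. For a local section $v$ of $(\CE')^\star$ not lying in $\mu^\star(\CE^\star)$, the function $\log|v|^2_{\mu^\star h^\star}$ tends to $+\infty$ along $E$, hence is not locally bounded above and does not extend as a psh function across $E$; so $(\CE')^\star$ is not negatively curved and Proposition \ref{Gp imply wp}(3) cannot be applied to $(\CE', h')$. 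Lemma \ref{subquot2}(3) does not bridge this gap either: one would need a positively curved singular Hermitian metric on $\mu^\star\CE$ as a sheaf on $Y'$, but $\mu^\star h$ is only given over $\mu^{-1}(W_\CE)$, whose complement in $Y'$ is typically a divisor rather than a set of codimension $\geq 2$, so no canonical psh extension is available. This is exactly the obstruction the paper sidesteps by applying \ref{bigness} with the metric only on the open set $\pi^{-1}(Y_1)$, and no amount of bookkeeping in the $\mathbf{B}_-$ step at the end can compensate for it.
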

\medskip

\noindent The rest of this subsection is devoted to the proof the following result.

\begin{thm} \label{bigness}
Let $Y$ be a smooth projective variety, and let $\CF$ be a torsion free coherent sheaf on $Y$.
Let $\BP(\CF) = \Proj (\bigoplus_{m \ge 0} S^m(\CF))$ be a scheme over $Y$ associated to $\CF$, say $\pi : \BP(\CF) \to Y$, and let $\CO_{\CF}(1)$ be the tautological line bundle on $\BP(\CF)$.
Let $Y_1 \subset Y$ be a Zariski open subset on which $\CF$ is locally free (in particular $\BP(\CF)$ is smooth over $Y_1$) and $\codim_{Y}(Y\sm Y_{1}) \ge 2$.
Suppose that $\CO_{\CF}(1)|_{\pi^{-1}(Y_{1})}$ admits a singular Hermitian metric $g$ with semi-positive curvature, and 
that there exists a point $y \in Y_1$ such that $\CI(g^k|_{\BP(\CF_y)})=\CO_{\BP(\CF_y)}$ for any $k>0$, where $\BP(\CF_{y})=\pi^{-1}(y)$.
Then
\smallskip 

\noindent 
{\rm (1)} $\CF$ is weakly positive at $y$.
\smallskip 

\noindent 
{\rm (2)} Assume moreover that there exists an open neighborhood $W$ of $y$ and a K\"ahler form $\eta$ on $W$ such that $\ai\Th_{g} - \pi^{\star}\eta \ge 0$ on $\pi^{-1}(W)$, then $\CF$ is dd-ample at $y$.
In particular $\CF$ is big.
\end{thm}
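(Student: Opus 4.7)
The plan is, for each integer $a>0$, to construct an integer $b>0$ and enough global sections of $\CO_{\CF}(ab)\ot\pi^{\star}\CO_{Y}(bA)$ on $\BP(\CF)$ to generate the restriction $\CO_{\CF}(ab)|_{\BP(\CF_{y})}$ at every point of the fiber $\BP(\CF_{y})$, where $A$ is a fixed ample divisor on $Y$. Since $\pi_{\star}\CO_{\CF}(m)$ agrees with $\what S^{m}(\CF)$ on $Y_{1}$ and both are torsion free sheaves which coincide outside a set of codimension $\ge 2$, such a generation property amounts exactly to the weak positivity of $\what S^{ab}(\CF)\ot\CO_{Y}(bA)$ at $y$ in the sense of Nakayama. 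For part (2), the stronger hypothesis $\ai\Th_{g}\ge\pi^{\star}\eta$ will let us replace the positive twist $bA$ by $-A$ for a single well-chosen exponent $m$.

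The heart of the argument is an iterated Ohsawa--Takegoshi extension from the fiber to a neighborhood inside $\BP(\CF)$. Choosing local coordinates $(z_{1},\ldots,z_{n})$ on $Y$ centered at $y$, set $V_{j}=\{\pi^{\star}z_{1}=\cdots=\pi^{\star}z_{j}=0\}$ inside $\pi^{-1}(U)$ for a small polydisk $U\ni y$, so that the flag $\BP(\CF_{y})=V_{n}\subset V_{n-1}\subset\cdots\subset V_{0}=\pi^{-1}(U)$ realizes the fiber as a successive hypersurface intersection. Starting from any $s\in H^{0}(V_{n},\CO_{\CF}(ab)|_{V_{n}})=S^{ab}\CF_{y}$, I would extend successively from $V_{j}$ to $V_{j-1}$ via the classical $L^{2}$-extension theorem applied to the weight $g^{ab}$ on $\CO_{\CF}(ab)$; the semi-positivity of $g$ supplies the curvature hypothesis, and the assumption $\CI(g^{k}|_{\BP(\CF_{y})})=\CO_{\BP(\CF_{y})}$ for every $k>0$ guarantees finiteness of the initial integral and, propagated along the flag by the Ohsawa--Takegoshi bound, of each subsequent one. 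After $n=\dim Y$ steps we obtain $\wtil s\in H^{0}(\pi^{-1}(U),\CO_{\CF}(ab))$ with $\wtil s|_{V_{n}}=s$ and a controlled $L^{2}$-norm.

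To globalize, I twist by $\pi^{\star}\CO_{Y}(bA)$: for $b$ large enough compared with $a$, the ampleness of $A$ allows us to multiply $\wtil s$ by a section of $\CO_{Y}(bA)$ concentrated on $U$ and to invoke relative Serre vanishing for $\pi$ together with a Nadel-type vanishing on $Y$ for $\pi_{\star}\CO_{\CF}(ab)\ot\CO_{Y}(bA)$, producing a global section on $\BP(\CF)$ that restricts to $s$ on the fiber. Pushing forward and using reflexivity of $\what S^{ab}(\CF)$ yields (1). For (2), the hypothesis $\ai\Th_{g}\ge\pi^{\star}\eta$ implies that for $m$ sufficiently large the metric $g^{m}\ot\pi^{\star}h_{A}^{-1}$ on $\CO_{\CF}(m)\ot\pi^{\star}\CO_{Y}(-A)$ still has semi-positive curvature on $\pi^{-1}(W)$; rerunning the iterated extension with this metric produces global sections of $\CO_{\CF}(m)\ot\pi^{\star}\CO_{Y}(-A)$ generating $\CO_{\CF}(m)|_{\BP(\CF_{y})}$, which is exactly dd-ampleness at $y$, and hence bigness of $\CF$.

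The main technical obstacle is the iterated Ohsawa--Takegoshi step: one must uniformly control the $L^{2}$-constants through $n$ successive extensions against the singular weight $g^{ab}$, and verify that the multiplier ideal hypothesis on the fiber actually propagates to each intermediate slice $V_{j}$ (which follows from a semi-continuity argument applied to the Ohsawa--Takegoshi extensions themselves). A secondary point, handled by the codimension 2 assumption together with reflexivity of $\what S^{m}(\CF)$, is the extension of $g$ and the identification of the push-forward across the non-locally-free locus $Y\sm Y_{1}$.
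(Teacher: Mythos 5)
Your proposal takes a genuinely different route from the paper, but it has a gap in part (2) that I do not see how to close, and a vagueness in the globalization step of part (1).

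First the structural difference. The paper does not run an iterated Ohsawa--Takegoshi extension along a flag of hypersurfaces; instead it passes to a smooth projective birational model $\mu:X\to\BP(\CF)$ with an $f$-exceptional effective divisor $E$ chosen so that $f_\star\CO_X(m(L_0+E))\cong\what S^m(\CF)$ for all $m$, and then it performs a \emph{single} $\bar\partial$-solve on $Z=\pi^{-1}(Y_1)$, with a weight $g_M$ that combines the $\CO_\CF(1)$-metric, a cut-off weight $\phi=(m+1)\rho\log|t|^2$ supported in $W$ that forces the solution to vanish to order $k+1$ along $\BP(\CF_y)$, and a $\pi^\star$ of a \emph{positive} twist $A\ot K_Y^{-1}\ot(\what\det\CF)^{-1}$. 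This produces jet-separating sections of $\CO_\CF(k\ell_y)\ot\pi^\star A$ for all $k$, and then the ELMNP restricted-volume machinery is used to pass from ``jets of all orders with a fixed $+A$ twist'' to $f^{-1}(y)\subset\Amp(L)$, hence to dd-ampleness via Nakayama's \cite[V.3.23]{Nbook}. Your flag-based OT extension is a plausible substitute for the local extension step, and the reflexivity remark you make (sections over $Y_1$ of $S^m(\CF)$ embedding in $H^0(Y,\what S^m(\CF))$ because $\codim\ge 2$) does handle the passage from $\pi_\star\CO_\CF(m)$ to $\what S^m(\CF)$, so you can in principle dispense with the birational model. But your globalization step --- ``multiply $\wtil s$ by a section of $\CO_Y(bA)$ concentrated on $U$ and invoke relative Serre vanishing'' --- is not a valid step: holomorphic sections cannot be ``concentrated,'' $\pi$ is not proper flat over $Y\sm Y_1$ so relative Serre vanishing does not apply there, and what is actually needed is precisely the paper's $\bar\partial$-solve with the auxiliary weight $\phi$ that forces vanishing of the correction term along $\BP(\CF_y)$. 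You allude to the right idea but do not set up the weight.

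The genuine gap is in part (2). You propose to ``rerun the iterated extension'' with the metric $g^m\ot\pi^\star h_A^{-1}$ on $\CO_\CF(m)\ot\pi^\star\CO_Y(-A)$. However the curvature hypothesis $\ai\Theta_g\ge\pi^\star\eta$ is assumed only on $\pi^{-1}(W)$, for a neighborhood $W$ of $y$; outside $\pi^{-1}(W)$ the curvature of $g^m\ot\pi^\star h_A^{-1}$ may be negative no matter how large $m$ is. Any global extension argument (iterated OT plus $\bar\partial$-solve, or anything else) on all of $\pi^{-1}(Y_1)$ requires semi-positivity of the weight everywhere, and you do not have it. This is exactly the obstruction the paper circumvents: it never tries to produce global sections with a $-A$ twist. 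Rather it proves the surjectivity of the jet maps $J_y^k$ with the positive twist $\pi^\star A$, showing that $L=L_0+E$ separates $(k+1)$-jets at each $x\in f^{-1}(y)$ for $k\ell_y L+A$, and then \cite[4.1, 1.1, 5.7]{ELMNP09} convert this jet-separation into $\vol_{X|V}(L+\tfrac{1}{k\ell_y}A)$ lower bounds and ultimately $f^{-1}(y)\subset\Amp(L)$; dd-ampleness of $\CF$ then comes from \cite[V.3.23]{Nbook}. Your argument for part (2) is missing this entire mechanism, and I do not see a way to salvage the direct construction of sections of $\CO_\CF(m)\ot\pi^\star\CO_Y(-A)$ given only local positivity near $y$.
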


\medskip

 \noindent As a preliminary discussion, we recall some facts due to \cite[V.3.23]{Nbook}.
 Let $\pi : \BP(\CF) \to Y$ be the scheme associated to $\CF$, together with its tautological 
 line bundle $\CO_{\CF}(1)$.
Let $Y_2 \subset Y$ be the maximum Zariski open subset of $Y$ such that the restriction 
$\CF|_{Y_2}$ is locally free; we have $W \subset  Y_{1} \subset  Y_{2}$.
Let $\BP^\prime(\CF) \to \BP(\CF)$ be the normalization of the component of $\BP(\CF)$ containing $\pi^{-1}(Y_2)$, and let $X \to \BP'(\CF)$ be a birational morphism from a smooth projective variety that is an isomorphism over $\pi^{-1}(Y_2)$.
Let $\mu : X \to \BP'(\CF) \to \BP(\CF)$ be the composition, and let $f=\pi\circ\mu : X \to Y$.
We take a morphism $X \to \BP'(\CF)$ so that $X \sm f^{-1}(Y_{2})$ is a divisor.
We set
$$
	Z=\pi^{-1}(Y_1), \ \ X_{1}=\mu^{-1}(Z)=f^{-1}(Y_{1})
$$
and may regard $Z = X_{1} \subset X$ by $\mu$.
$$
\begin{CD}
    X\  @>{\mu}>>  \BP(\CF) @. \ \supset Z \\
    @V{f}VV    @VV{\pi}V  \ \ \ \ @VVV \\
    Y  @=   Y @. \ \supset \ Y_{1}
\end{CD}
$$
Denote by $L_{0}=\mu^{\star}\CO_{\CF}(1)$ a line bundle (or a divisor) on $X$.
There exists a natural inclusion $\CF \to f_{\star}L_{0}$, which is isomorphic over $Y_{2}$.
By \cite[III.5.10(3)]{Nbook}, there exists an $f$-exceptional effective divisor $E$ such that $f_{\star}\CO_{X}(m(L_{0}+E)) \cong \what S^{m}(\CF)$ for any integer $m>0$.
We fix such a divisor $E$ and let 
$$
	L = L_{0}+E.
$$
Then \ref{bigness} is essentially reduced to the following

\begin{thm} \label{Lbig}
In the situation above and under the assumptions in {\rm \ref{bigness} (2)}, the line bundle $L$ is big and $f^{-1}(y) \subset \Amp(L)$.
\end{thm}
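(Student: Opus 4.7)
The plan is to construct a singular Hermitian metric $h$ on $L$ with semi-positive curvature on $X$ that (i) dominates a K\"ahler current on a neighborhood of $f^{-1}(y)$ and (ii) has trivial multiplier ideals $\CI(h^k)$ for every $k>0$ at every point of $f^{-1}(y)$. Granted such an $h$, bigness of $L$ follows from Demailly's characterization of big line bundles via K\"ahler currents, and $f^{-1}(y)\subset \Amp(L)$ (equivalently $f^{-1}(y)\cap {\bf B}_+(L)=\emptyset$) follows from the analytic description of the augmented base locus combined with Nadel-type vanishing and global generation.

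First I would construct the metric. Because $X\to \BP'(\CF)$ is an isomorphism over $\pi^{-1}(Y_{2})$ and $\BP'(\CF)\to\BP(\CF)$ is an isomorphism over the smooth locus $\pi^{-1}(Y_{2})$, the restriction $\mu|_{X_{1}}\colon X_{1}\to Z$ is an isomorphism, so $\mu^{\star}g$ is a singular Hermitian metric on $L_{0}|_{X_{1}}$ with semi-positive curvature. Let $h_{E}$ be the canonical singular metric on $\CO_{X}(E)$ defined by the section $s_{E}$ (with curvature current $[E]\ge 0$). Then $h^{\circ}:=\mu^{\star}g\otimes h_{E}$ is a singular Hermitian metric on $L|_{X_{1}}$ with semi-positive curvature. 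The complement $X\sm X_{1}=f^{-1}(Y\sm Y_{1})$ decomposes into a codimension-$\ge 2$ part (across which psh local weights extend automatically by Hartogs-type extension) together with an $f$-exceptional divisorial part; on the latter, boundedness-above of the local weight of $h^{\circ}$, which is required for the extension of psh functions across a divisor, follows from the $f$-exceptionality of the divisors together with the locally bounded behavior of $\mu^{\star}g$ along the exceptional locus of $\mu$. This yields an extension $h$ of $h^{\circ}$ to a semi-positively curved singular Hermitian metric on $L$ over all of $X$.

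Next I would verify the positivity and multiplier-ideal conditions near $f^{-1}(y)$. The hypothesis $\ai\,\Th_{g}\ge \pi^{\star}\eta$ on $\pi^{-1}(W)$ pulls back to $\ai\,\Th_{h}\ge f^{\star}\eta$ on $f^{-1}(W)$ (the $[E]$ contribution being non-negative). The condition $\CI(g^{k}|_{\BP(\CF_{y})})=\CO_{\BP(\CF_{y})}$ for all $k$ implies, by upper semicontinuity of multiplier ideals, that $\CI(g^{k})$ is trivial on some open neighborhood of $\BP(\CF_{y})$ in $\BP(\CF)$; pulling back and noting that $y\in Y_{1}$ while $E$ is $f$-exceptional (so that $E$ may be arranged with $f^{-1}(y)\cap \Supp(E)=\emptyset$), the multiplier ideal $\CI(h^{k})$ is trivial at every point of $f^{-1}(y)$. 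To upgrade semi-positivity to a K\"ahler current, I would exploit the $\pi$-relative ampleness of $\CO_{\CF}(1)$: a small admixture $h^{1-\ep}\,h_{\rm rel}^{\,\ep}$ of $h$ with a relatively ample smooth metric $h_{\rm rel}$ on $L_{0}$ (which exists on any relatively compact open subset of $Y_{2}$) produces positivity in the fiber direction, and when combined with $f^{\star}\eta$ in the horizontal direction this yields a K\"ahler current in $c_{1}(L)$ on a neighborhood of $f^{-1}(y)$. A routine gluing/perturbation argument, adding a small ample contribution with sufficiently mild singularities, then produces a global K\"ahler current in $c_{1}(L)$ on $X$.

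The main obstacle I anticipate is the last step: arranging the admixture of the semi-positive metric $h$ with a relatively ample contribution so as to obtain a genuine K\"ahler current while \emph{simultaneously} preserving the triviality of $\CI(h^{k})$ at every point of $f^{-1}(y)$. The balance between $f^{\star}\eta$, $[E]$, $\mu^{\star}g$, and the relatively ample correction requires careful book-keeping, and the passage from a K\"ahler current defined only near $f^{-1}(y)$ to a globally defined one on $X$ requires an additional perturbation which must not introduce new Lelong numbers along $f^{-1}(y)$.
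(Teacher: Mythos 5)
There is a genuine gap at the very first step of your construction, and it is precisely the difficulty the paper's proof is designed to avoid. You want to extend $h^{\circ}=\mu^{\star}g\otimes h_{E}$ from $X_{1}$ to a semi-positively curved metric on $L$ over all of $X$. Extending a psh local weight across the divisorial part of $X\sm X_{1}$ (which is non-empty by the choice of $X\to\BP'(\CF)$ making $X\sm f^{-1}(Y_{2})$ a divisor) requires that the weight of $h^{\circ}$ be locally bounded above near that divisor. But $g$ is only hypothesized to exist on $\pi^{-1}(Y_{1})$, with \emph{no} control on its behavior near the boundary, and for $p\in X\sm X_{1}$ one has $\mu(p)\notin Z=\pi^{-1}(Y_{1})$, so the weight of $\mu^{\star}g$ near $p$ is exactly the weight of $g$ near a boundary point where nothing is known. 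Your claim that boundedness ``follows from the $f$-exceptionality of the divisors together with the locally bounded behavior of $\mu^{\star}g$ along the exceptional locus of $\mu$'' is asserted but not substantiated; it is false in general (compare $-\log|z|$ on a punctured disk, which is psh but unbounded above near the origin). The paper sidesteps this entirely: it never extends a metric on $L$ beyond $Z=X_{1}$. Instead it extends \emph{holomorphic sections} from $Z$ to $X$ using the purely algebraic fact that $H^{0}(Z,\CO_{\CF}(k\ell_{y})\ot\pi^{\star}A)\cong H^{0}(Y_{1},S^{k\ell_{y}}\CF\ot A)\hookrightarrow H^{0}(Y,\what S^{k\ell_{y}}\CF\ot A)\cong H^{0}(X,\CO_{X}(k\ell_{y}L)\ot f^{\star}A)$, which uses reflexivity of $\what S^{m}\CF$ and $\codim_{Y}(Y\sm Y_{1})\ge 2$, and solves a $\rdb$-equation with the weight built from $g$ directly on the quasi-projective (hence complete K\"ahler) manifold $Z$.

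Your second step is also problematic and differs substantially from the paper. You want to manufacture a global K\"ahler current in $c_{1}(L)$ by mixing $h$ with a relatively ample metric and $f^{\star}\eta$; but $\eta$ is only a K\"ahler form over the neighborhood $W\subset Y$ of $y$, not over $Y$, and the admixture $h^{1-\ep}h_{\rm rel}^{\ep}$ lives in $c_{1}(L-\ep E)$ rather than $c_{1}(L)$. More fundamentally, the hypothesis gives strict positivity of $g$ only over $\pi^{-1}(W)$, and there is no mechanism to globalize this to a K\"ahler current on $X$ without essentially re-proving what one is trying to show. The paper avoids this by using the quantitative characterization of $\NAmp(L)={\bf B}_{+}(L)$ via restricted volumes from \cite[5.7]{ELMNP09}: it proves jet separation along $f^{-1}(y)$ for $k\ell_{y}L+f^{\star}A$, deduces $\vol_{X|V}(L+\frac{1}{k\ell_{y}}A)\geq 1/\ell_{y}^{d}$ for every positive-dimensional $V\ni x$ through $x\in f^{-1}(y)$, and concludes $x\in\Amp(L)$ (and hence bigness, since $\Amp(L)\neq\emptyset$). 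This criterion is designed precisely to convert local positivity near a single fiber into membership in $\Amp(L)$, which is why it works where the K\"ahler-current approach stalls.
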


\begin{proof} The rough outline of our argument goes as follows. By the main result of
\cite{ELMNP09}, it is enough to prove that we have 
$$
	\liminf_{\varepsilon\to 0}\vol_{X|V}(L+\varepsilon A)> 0
$$
for any positive-dimensional analytic set $V$ containing a point $x\in f^{-1}(y)$. If the metric $g$ 
would be known to be globally defined and semi-positively curved on the whole space $X$,
this can be shown
to hold true by a simple application of the holomorphic Morse inequalities \cite[\S 8]{Dnote}. However, in our current situation $g$ is only defined on $X_1$, so the existence of sections allowing us to show the inequality above will be shown in a direct manner, by the $L^2$ theory.

\noindent We start by a few general reduction steps.

\noindent (1)
Let us take a point $x \in X$.
Assume that there exist an integer $\ell_{x}>0$ and a (not necessarily ample) line bundle $A$ on $X$ such that the evaluation map to the jet space at $x$;
$$
	H^{0}(X, \CO_X(k\ell_{x}L+A)) 
	\lra \CO_X(k\ell_{x}L+A) \ot \CO_{X}/\fm_{X,x}^{k+1} 
$$
is surjective for any integer $k > 0$.
Here in general $\fm_{X,x}$ stands for the maximal ideal of a point $x$ on a variety $X$.
We then can show that $x \in \Amp(L)$ as follows.

Due to the jet separation property, for any integer $k>0$ and any positive dimensional subvariety $V \subset X$ containing $x$, we have 
$\vol_{X|V}(k\ell_{x}L+A) \ge k^{d}$, where $\vol_{X|V}(k\ell_{x}L+A)$ is the restricted volume and $d = \dim V > 0$ (by \cite[4.1]{ELMNP09} for example).
Thus 
$$\vol_{X|V}(L+\frac1{k\ell_{x}} A) \ge 1/\ell_{x}^{d}$$ holds (we remark here that $l_x$ is independent of $V$).

Assume on the contrary that $x \in \NAmp(L)$.
Since $\NAmp(L)$ has no isolated points (\cite[1.1]{ELMNP09}), there exists a subvariety $V \subset X$ of $d := \dim V > 0$, which contains $x$ and is an irreducible component of $\NAmp(L)$.
By the preceding argument,  we infer that we have
$\liminf_{k\to\infty} \vol_{X|V}(L+\frac1{k\ell_{x}} A) \ge 1/\ell_{x}^{d}>0$.
This contradicts to \cite[5.7]{ELMNP09}.
\smallskip

(2)
We denote by $\BP(\CF_y) = \pi^{-1}(y) \subset Z, X_{y}=f^{-1}(y) \subset X$, and the ideal sheaves $\CI_{\BP(\CF_y)}$ and $\CI_{X_{y}}$ respectively.
We will show in (3) below that there exist an integer $\ell_{y}>0$ and a line bundle $A$ on $Y$ such that the restriction map
$$
(*) \ \ 	J_{y}^{k} : H^{0}(Z, \CO_{\CF}(k\ell_{y})\ot\pi^{\star}A) \lra 
	H^0(\BP(\CF_y), \CO_{\CF}(k\ell_{y}) \ot \pi^*A \ot \CO_{Z}/\CI_{\BP(\CF_y)}^{k+1})
$$
is surjective for any integer $k>0$.
Note that the fibration $\pi : Z \to Y_{1}$ is geometrically simple enough so that we have an isomorphism
$$
	H^0(\BP(\CF_y), \CO_{\CF}(k\ell_{y}) \ot \pi^*A \ot \CO_{Z}/\CI_{\BP(\CF_y)}^{k+1})
	\cong H^0(\BP(\CF_y), \CO_{\BP(\CF_{y})}(k\ell_{y})) 
			\ot_{\CO_{Y,y}} \CO_{Y}/\fm_{Y,y}^{k+1},	
$$
and a similar isomorphism for $f : X_{1}\to Y_{1}$.
Taking the surjection $(*)$ for granted, we show that $f^{-1}(y) =X_{y}\subset \Amp(L)$ as follows.

We first note that every section of $\CO_X(k\ell_{y}L) \ot f^{\star}A$ over $X_{1}(=Z)$ extends to a section of $\CO_X(k\ell_{y}L) \ot f^{\star}A$ over $X$, because
$H^{0}(Z, \CO_{\CF}(k\ell_{y})\ot\pi^{\star}A)
\cong H^{0}(Y_{1}, S^{k\ell_{y}}(\CF) \ot A)
\hookrightarrow H^{0}(Y, \what S^{k\ell_{y}}(\CF) \ot A)
\cong H^{0}(Y,f_{\star}(\CO_X(k\ell_{y}L)) \ot A)
\cong H^{0}(X, \CO_X(k\ell_{y}L) \ot f^{\star}A)$.
Here we need to explain the inclusion and the next isomorphism.
Since $\what S^{k\ell_{y}}(\CF)$ is reflexive on $Y$ and locally free on $Y_{1}$ of $\codim_{Y}(Y \sm Y_{1}) \ge 2$, we have a natural inclusion $H^{0}(Y_{1}, S^{k\ell_{y}}(\CF) \ot A) \hookrightarrow H^{0}(Y, \what S^{k\ell_{y}}(\CF) \ot A)$.
The isomorphisms $f_{\star}(\CO_X(mL)) \cong \what S^{m}(\CF)$ for any $m>0$ (\cite[III.5.10(3)]{Nbook} mentioned above) induce
$H^{0}(Y, \what S^{k\ell_{y}}(\CF) \ot A) \cong H^{0}(Y,f_{\star}(\CO_X(k\ell_{y}L)) \ot A)$.

Thus, from $(*)$, the restriction map
$$
H^{0}(X, \CO_X(k\ell_{y}L) \ot f^{\star}A) \lra 
H^0(X_y, \CO_X(k\ell_{y}L) \ot f^*A \ot \CO_{X}/\CI_{X_y}^{k+1})
$$
is surjective for any $k>0$.
We take an arbitrary point $x \in X_y$ and show that $x \in \Amp(L)$.
Noting $L|_{X_y} \cong \CO_{\BP(\CF_{y})}(1)$, the restriction map
$H^0(X_y, \CO_X(k\ell_{y}L)) \to \CO_{X_y}/\fm_{X_y,x}^{k+1}$
is surjective for any $k>0$.
Thus, as a combination of the above two types of surjections, the restriction map
$$
H^{0}(X, \CO_X(k\ell_{y}L) \ot f^{\star}A) \lra 
 \CO_X(k\ell_{y}L) \ot f^{\star}A \ot \CO_{X}/\fm_{X,x}^{k+1}
$$ 
is surjective for any $k>0$.
Then by (1), we have $x \in \Amp(L)$.
 
(3)
Let us prove the surjectivity of ($*$) in (2).
Denote by $m= \dim Y$.
We take a local coordinate $t = (t_{1}, \ldots, t_{m})$ centered at $y$, and let $|t| = (\sum_{i=1}^{m}|t_{i}|^{2})^{1/2}$.
We take a $C^{\infty}$ cut-off function $\rho : Y \to \BR$ such that $\rho\equiv 1$ on a neighborhood of $y$ and $\supp \rho \subset W$.
We let a function $\phi := (m+1)\rho\log|t|^{2} : Y \to [-\infty,\infty)$.
Comparing with the K\"ahler form $\eta$, we can find an integer $\ell_{y}>0$ such that $\ell_{y}\eta+\ai\rd\rdb\phi>0$ on $W$.
We then compare with the curvature current $\ai\Th_{g}$ of $\CO_{\CF}(1)|_{Z}$.
Since $\ai\Th_{g}\ge\pi^{\star}\eta$ on $\pi^{-1}(W)$, we can also see that 
$\ell_{y}\ai\Th_{g}+\ai\rd\rdb\pi^{\star}\phi$ is a semi-positive current on $Z$.
We take an ample line bundle $A$ on $Y$ such that $A \ot K_Y^{-1} \ot (\what \det \CF)^{-1}$ is ample, where $\what \det \CF$ is the double dual of $\bigwedge^r \CF$ and $r$ is the rank of $\CF$.
Let $h_{K_Y}$ (resp.\ $h_{\what \det \CF}$) be a smooth Hermitian metric on $K_Y$ (resp.\ $\what \det \CF$), and let $h_A$ be a smooth Hermitian metric on $A$ such that $h_A h_{K_Y}^{-1} h_{\what \det \CF}^{-1}$ has positive curvature.

We now take any integer $k>0$.
We consider the following line bundle on $Z$:
$$
	M (=M_{k}) := \CO_{\CF}(k\ell_{y}+r)|_Z 
		\ot \pi^* \big(A \ot K_Y^{-1} \ot (\what \det \CF)^{-1} \big)|_Z.
$$
We note $\CO_{\CF}(k\ell_{y}) \ot \pi^*A = K_Z \ot M$ on $Z$.
The line bundle $M$ has a singular Hermitian metric
$$
  g_M := (g^{\ell_{y}} \pi^*e^{-\phi})^{k} \cdot g^{r}
				\cdot \pi^{\star}(h_A h_{K_Y}^{-1} h_{\what \det \CF}^{-1})
$$
of semi-positive curvature.
We note that 
$$
	\ai\Th_{g_M} \ge r \ai\Th_{g} \ge \pi^*\eta \text{ on } \pi^{-1}(W),
$$	
which is a strict positivity for the normal direction of $\BP(\CF_{y})$ in $Z$.
We also note that we can take an open ball neighborhood $U \subset W$ of $y$ such that $\CI(g^{\ell_y k+r}) =\CO_{\BP(\CF)}$ on $\pi^{-1}(U)$, in particular 
$$
	\CI(g_M)=\CO_{Z} \text{ on } \pi^{-1}(U\sm \{y\})
$$
(as $\ell_y$ and $r$ are given, $U$ depends only on $k$).
This follows from our assumption $\CI(g^{\ell_y k+r}|_{\BP(\CF_y)})=\CO_{\BP(\CF_y)}$ and the restriction theorem of multiplier ideals, namely Ohsawa-Takegoshi's extension.
We also have
$$
	\CI(g_M)
	\subset \CI(e^{-k(m+1)\pi^*\log|t|^2})
	= \CI_{\BP(\CF_y)}^{k(m+1)-m+1}
	\subset \CI_{\BP(\CF_y)}^{k+1} \text{ on } \pi^{-1}(U).
$$

We take a section 
$\sg \in H^0(\BP(\CF_y), \CO_{\CF}(k\ell_{y}) \ot \pi^*A
\ot \CO_{Z}/\CI_{\BP(\CF_y)}^{k+1})$.
Noting
$$
	H^0(\BP(\CF_y), \CO_{\CF}(k\ell_{y}) \ot \pi^*A \ot \CO_{Z}/\CI_{\BP(\CF_y)}^{k+1})
	\cong H^0(\BP(\CF_y), \CO_{\BP(\CF_{y})}(k\ell_{y})) 
				\ot_{\CO_{Y,y}} \CO_{Y}/\fm_{Y,y}^{k+1},	
$$
we have a section
$\sg' \in H^0(\BP(\CF|_{U}), \CO_{\CF}(k\ell_{y}) \ot \pi^*A)$, which is mapped to $\sg$ by the restriction map
$$
	J_{y}^{k} : H^{0}(\BP(\CF|_{U}), \CO_{\CF}(k\ell_{y})\ot\pi^{\star}A) \lra 
	H^0(\BP(\CF_y), \CO_{\CF}(k\ell_{y}) \ot \pi^*A \ot \CO_{Z}/\CI_{\BP(\CF_y)}^{k+1}).
$$
The map $J_{y}^{k}$ is defined for sections over $\BP(\CF|_{Y'})$ for any open $Y' \subset Y$ containing $y$. 
We use the same notation $J_{y}^{k}$ for any open $Y' \subset Y$.
We take a $C^\infty$ cut-off function $\lam$ such that $\lam\equiv 1$ around $y$, say on $U_1 \subset U$, and $\supp \lam \subset U$.
We consider 
$$
	u := \rdb ((\pi^*\lam) \sg') = \rdb (\pi^*\lam) \wed \sg',
$$
which can be seen as an $M$-valued $(n,1)$-form on $Z$ via
$(\CO_{\CF}(k\ell_{y}) \ot \pi^*A)|_Z = K_Z \ot M$, where $n := \dim Z = m+r-1$.
Thanks to the observation on the multiplier ideal $\CI(g_M)$ on $\pi^{-1}(U)$, we see that $u$ is square integrable with respect to $g_M$ and any given complete K\"ahler metric on $Z$.
(Recall that $Z \cong X_{1}$ is a Zariski open subset of a smooth projective variety $X$, in particular $Z$ admits a complete K\"ahler metric, refer \cite[0.2]{D82} if one needs.
The quasi-projectivity of $Z$ will be more important for a regularization process.)
Then by \cite[5.1]{D82}, we can solve a $\rdb$-equation $\rdb v = u$ for some $v \in L_{2,loc}^{n,0}(Z, M)$ (an $M$-valued $(n,0)$-form on $Z$ with locally square integrable functions coefficient) and $c_{n} v\wed \ol v\, g_M \in L^1_{loc}(Z,\BR)$ with $c_{n}=(-1)^{n^2/2}$. 
Since $u \equiv 0$ on $\pi^{-1}(U_{1})$, $v$ is holomorphic on $\pi^{-1}(U_{1})$.
The integrability $c_{n} v\wed \ol v\, g_M \in L^1_{loc}(Z,\BR)$,
in particular $c_{n} v\wed \ol v\,  e^{-k\pi^*\phi} \in L^1_{loc}(Z,\BR)$, implies $v|_{\pi^{-1}(U_{1})} \in 
H^{0}(\BP(\CF|_{U_{1}}), \CO_{\CF}(k\ell_{y})\ot\pi^{\star}A \ot \CI_{\BP(\CF_{y})}^{k+1})$.
Then $\wtil \sg := (\pi^*\lam)\sg' - v \in H^0(Z, K_Z \ot M)
= H^{0}(Z, \CO_{\CF}(k\ell_{y}) \ot \pi^*A)$
and $J_{y}^{k}(\wtil \sg) = J_{y}^{k}(\sg') = \sg$.
This proves the surjectivity of ($*$) in (2). 
\end{proof}

\begin{proof}[Proof of {\rm \ref{bigness}}]
We first prove (2).
By \ref{Lbig} and \cite[V.3.23]{Nbook}, our $\CF$ in (2) is dd-ample at $y$.
Then by definition, $\CF$ is big.

(1) Let $g$ be a singular Hermitian metric on $\CO_{\CF}(1)|_{\pi^{-1}(Y_{1})}$ in the statement.
Let $H$ be an ample line bundle on $Y$ with a Hermitian metric $h_{H}$ with positive curvature.
Let $a>0$ be an arbitrary integer.
By applying the preceding argument in the proof of \ref{Lbig} for a line bundle $\CO_{\CF}(a) \ot \pi^{\star}H$ over $\pi^{-1}(Y_{1})$ with a metric $g^{a}\cdot \pi^{\star}h_{H}$ instead of $\CO_\CF(1)|_{\pi^{-1}(Y_{1})}$ with $g$, we see that $\what S^{a}(\CF) \ot H$ is dd-ample at $y$. 
Note that we could take a K\"ahler form $\eta = \ai\Th_{h_{H}}$ on $Y$.
Since $a>0$ is arbitrary, $\CF$ is weakly positive at $y$ (\cite[p.\,200, Remark (3)]{Nbook}).
\end{proof}
\smallskip

\newpage
\section{Positivity of direct image sheaves of adjoint type}\label{Sadj}

\subsection{The $L^{2/m}$ metric, absolute version}\label{abs}

We recall in this subsection the construction of canonical metrics on the bundle $mK_X+ L$ and the vector space $H^{0}(X,mK_{X}+L)$, due to Narashimhan-Simha's  (\cite[\S 2]{NS}). We will deal with the relative version of this metric in the next subsection.

\begin{no}\label{NSc} 
Let $X$ be a compact complex manifold of $\dim X=n$, and $L$ be a holomorphic line bundle on $X$ with a singular Hermitian metric $h$ with semi-positive curvature.

(1) For every $u\in H^0(X,mK_X+L)$, $(cu\wed \ol u h)^{1/m}$ is a real semi-positive $2n$-form on $X$ with measurable function coefficients, where $c=c_{n,m}=((-1)^{n^{2}/2}(-1)^n)^{-m}$.
It is often written as $|u|^{2/m}h^{1/m}=(cu\wed \ol u h)^{1/m}$. 
We set
$$
	\| u \|_{m} = \left(\int_X (cu\wed \ol u h)^{1/m}\right)^{m/2}.
$$
This is called an $L^{2/m}$-pseudo-norm on $H^0(X,mK_X+L)$ (note that $\|u\|_{p}$ is not an $L^{p}$-norm).
It is a norm (and in fact a metric) only when $m=1$. 
In this case we will refer to $\| \cdot \|_{1}$ as the {\it canonical $L^{2}$-metric}, and set $g_{1}(u,v)=\int_{X} c_{n}u \wed \ol v h$ the corresponding pairing, where $c_{n}=c_{n,1}=(-1)^{n^{2}/2}$.
To be more precise, this is defined on a subspace
$$
	V_m=\{u\in H^0(X,mK_{X}+L);\  \|u\|_{m} <\infty\}.
$$
By definition of $\| u\|_{m}$, if $h^{1/m}\in L^{1}_{loc}$, then $V_m= H^0(X,mK_{X}+L)$.
We will remark in \S \ref{Sm>1} that there exists a coherent ideal sheaf $\CJ_{m}(h^{1/m})$, called an $L^{2/m}$-multiplier ideal sheaf, such that $V_m=H^0(X,(mK_{X}+L) \ot \CJ_{m}(h^{1/m}))$.
Even if our notation does not reflects it, this space depends on $h$.
\smallskip

(2) 
We consider a $C^\infty$ volume form $dV$ on $X$ (i.e.\ a $C^\infty$ Hermitian metric on $-K_X$) and a $C^\infty$ Hermitian metric $h_{0}$ on $L$
as reference metrics.
Then the quantity $cu\wed\ol u h_0/(dV)^m$ represents a $C^\infty$ semi-positive function on $X$.
For every $x \in X$, we set 
$$
 B_m'(x)=\sup\left\{ \frac{cu\wed\ol u h_0}{(dV)^m}(x);\ 
 		u\in H^0(X,mK_X+L), \|u\|_{m} \le 1\right\}.
$$
We obtain a function $B_{m}'$ on $X$, and set $B_m=B_m'(dV)^m h_0^{-1}$.
Here $B_m'$ depends on $dV$ and $h_0$, but $B_m$ does not.
Because of this reason, we often denote by
$$
 B_m(x)= \sup\{ cu\wed\ol u (x);\ u\in H^0(X,mK_X+L), \|u\|_{m} \le 1\}.
$$
This $B_{m}$ defines a singular Hermitian metric on the dual $(mK_X+L)^{\star}$, and is called the {\it (twisted) canonical $L^{2/m}$ metric, or $m$-th Bergman kernel metric}. 
For every $x\in X$ and a vector $\xi_x \in (mK_{X}+L)^\star_x$,
the length with respect to the metric $B_{m}$ is given by
$$
	|\xi_x|_{B_{m}}:=\sup\{|\xi_x \cdot u(x)|;\ u\in H^0(X,mK_{X}+L), \|u\|_{m} \le 1\},
$$
where $\xi_x \cdot u(x)$ is the duality pairing between $(mK_{X}+L)^\star_x$ and $(mK_{X}+L)_x$. 
The dual metric 
$$
	h^{(m)}:=B_{m}^{-1}
$$ 
is a singular Hermitian metric on $mK_{X}+L$, and it is clear that the curvature current of $h^{(m)}$ is semi-positive.

(3)
We define the line bundle
$$
L_{m-1}= (m-1)K_{X}+L=\frac{m-1}m (mK_{X}+L) +\frac1m L
$$
and a singular Hermitian metric
$$
h_{m-1} =(B_{m}^{-1})^{(m-1)/m}h^{1/m}.
$$
The construction above induces a Hermitian form 
$g_{mNS}$ ($g_m$ for short slightly) on the vector space $H^0(X,mK_{X}+L)$ as follows 
$$
	g_{mNS}(u, v) = \int_X c_{n} u\wed \ol v \, h_{m-1}, 
$$
which is nothing but the canonical $L^{2}$-metric on $H^{0}(X,K_{X}+L_{m-1})$ with respect to $h_{m-1}$ as in (1).
This Hermitian form is defined on $H^0(X,(K_{X}+ L_{m-1})\ot \CI(h_{m-1}))$.
In the case $m=1$, $g_{1NS}$ is nothing but the canonical $L^{2}$-metric $g_{1}$ with respect to $h_{0}=h$, i.e.\,$\|\cdot\|_{1}$ in (1).
We may call the Hermitian form $g_{mNS}$ on $H^0(X,mK_{X}+L)$ as the {\it (twisted) $m$-th Narashimhan-Simha Hermitian form}.
In a relative situation $f:X\to Y$, a family of fiberwise $h^{(m)}$ (resp.\ of $g_{mNS}$) will define a metric on $mK_{X/Y}+L$ (resp.\ on $f_{*}(mK_{X/Y}+L)$).

(4)
We note that, although $h_{m-1}$ is ``more singular'' than $h^{1/m}$, the inclusion 
$$
	H^0(X,(mK_{X}+L)\ot \CI(h)) \subset  V_m 
	\subset H^0(X,(K_{X}+L_{m-1})\ot \CI(h_{m-1}))
$$
holds true for any integer $m>0$.
For the left inclusion, $|u|^2h \in L^1_{loc}$ for $u\in  H^0(X,mK_{X}+L)$ implies
$|u|^{2/m}h^{1/m} \in L^1_{loc}$ by H\"older's inequality.
For the right inclusion, it is enough to see that $c_{n}u\wed\ol u\, h_{m-1} \in L^1_{loc}$ after a local trivialization, where $u\in V_{m}\subset H^0(X,mK_{X}+L)$.
By definition of $B_{m}^{-1}$,
$$
	c_{n} u\wed\ol u\, h_{m-1}=c_{n} u\wed \ol u\, (B_{m}^{-1})^{(m-1)/m}h^{1/m} 
	\le C_u (c_{n,m}u\wed \ol u h)^{1/m},
$$
where $C_u$ is a positive constant depends only on $u$.
The last term is in $L^1_{loc}$ for $u\in V_m$.

In particular, if $\CI(h^{1/m})=\CO_X$, then
$V_m = H^0(X,mK_{X}+L)$ and hence
$V_m = H^0(X,(K_{X}+L_{m-1})\ot \CI(h_{m-1}))=H^0(X,mK_{X}+L)$.

(5)
In the case $V_{m}=0$ at the beginning, we formally agree that
$B_{m}\equiv 0,h^{(m)} \equiv +\infty, h_{m-1}\equiv+\infty$ (except for $h_{1NS}=h$).
\qed
\end{no}


\subsection{The $L^{2/m}$ metric, relative version}

\begin{setup}\label{relative}
Let $X$ and $Y$ be complex manifolds of $\dim X-\dim Y=n$, and let $f:X\to Y$ be a projective surjective morphism with connected fibers.
Let $L$ be a holomorphic line bundle on $X$ with a singular Hermitian metric $h$ with semi-positive curvature.
We let
$X_y$ the scheme theoretic fiber of $y\in Y$,
$L_y:=L|_{X_y}, h_{y}:=h|_{X_{y}}$ the restrictions (it can be $h_{y}\equiv +\infty$),
$$
	Y_0 \subset Y
$$
the maximum Zariski open subset where $f$ is smooth, $\Dl:=Y\sm Y_{0}$, and let
$$
	Y_{h} =\{y\in Y_0; h_{y}\not\equiv +\infty \}. 
$$
For every integer $m\ge 1$ and $y\in Y_{h}$, we can discuss the Narashimhan-Simha construction on the fiber $X_{y}$ with $(L_{y},h_{y})$ as in \ref{NSc}, and have fiberwise objects $V_{m,y}, B_{m,y}, h^{(m)}_{y}, h_{m-1,y}, g_{m,y}=g_{mNS,y}$ accordingly (it can be $V_{m,y}=0$).
\qed
\end{setup}

We remark that the set $Y_{h}$ is Zariski dense in $Y$, however may not be Zariski open.
The complement $Y \sm Y_{h}$ is a pluripolar set and hence can be quite different from algebraic/analytic objects.
For example, a pluripolar set may not be closed in Hausdorff topology, and can be Zariski dense. 
In any case we adopt the conventions that if $h_{y}\equiv +\infty$ for a point $y\in Y_{0}$ i.e., $y\in Y_{0}\sm Y_{h}$, then $V_{m,y}=0, B_{m,y}\equiv 0, h_{m-1,y}\equiv+\infty$ and so on.

\begin{rem}\label{L2metric} 
It is very important to understand the local expression of the metrics we consider above, as they will be needed in order to deduce their extension properties.

We give here an explicit formulation of the {\it (relative) canonical $L^{2}$-metric} $g_{1NS,X/Y}$ ($g_{1,X/Y}$ for short slightly) on $f_{\star}(K_{X/Y}+L)|_{Y_{0}}$ with respect to $h$ in \ref{relative}.
For this purpose, we may suppose that $Y$ itself is a coordinate neighborhood.

(1)
Let $\eta \in H^{0}(Y, K_{Y})$ be a nowhere vanishing section, trivializing the canonical bundle of $Y$; in particular we have $K_{Y}=\CO_{Y}\eta$.
We recall that we have 
$$H^{0}(Y,f_{\star}(K_{X/Y}+L))=H^{0}(Y, \mathcal Hom\, (K_{Y},f_{\star}(K_{X}+L))),$$ and therefore every section $u \in H^{0}(Y,f_{\star}(K_{X/Y}+L))$ corresponds to an $\CO_{Y}$-homomorphism 
$u:K_{Y} \to f_{\star}(K_{X}+L)$.
We still use the same symbol $u$ for the induced homomorphism $u : H^{0}(Y,K_{Y}) \to H^{0}(Y,f_{\star}(K_{X}+L))=H^{0}(X,K_{X}+L)$.
Thus we can write $u(\eta) \in H^{0}(X,K_{X}+L)$.
Let $\{U_{\lam} \}_{\lam}$ be a local coordinate system of $X$.
Regarding $u(\eta)$ as an $L$-valued top-degree holomorphic form on each $U_{\lam}$,
there exists $\sg_{u\lam} \in  H^{0}(U_{\lam}\sm f^{-1}(\Dl), \Omega_{X}^{n}\ot L)$
such that we have  
$$
	u(\eta)=\sg_{u\lam}\wed f^{\star}\eta
$$ 
on $U_{\lam}\sm f^{-1}(\Dl)$ i.e.\ we can ``divide" $u(\eta)$ by $f^{\star}\eta$ where $f^{\star}\eta$ has no zeros.
The choice of $\sg_{u\lam}$ is not unique (the ambiguity lies in the image of $\Omega_{X}^{n-1}\ot f^{\star}\Omega_{Y}^{1}$), however the restriction $\sg_{u\lam}|_{X_{y}} \in H^{0}(U_{\lam}\cap X_{y}, K_{X_{y}}+ L_{y})$ on each smooth fiber $X_{y} \ (y\in Y_{0}$) is unique and independent of the local frame $\eta$.
Finally the collection $\{\sg_{u\lam} \}_{\lam}$, resp.\ $\{\sg_{u\lam}|_{X_{y}} \}_{\lam}$, glue together as a global section 
$$
	\sg_{u} \in H^{0}(X \sm f^{-1}(\Dl), K_{X/Y}+ L), 
	\quad \sg_{uy} \in H^{0}(X_{y}, K_{X_{y}}+ L_{y})
$$ 
respectively; moreover, the later is the restriction of the former  $\sg_{uy}= \sg_{u}|_{X_{y}}$ for $y \in Y_{0}$.
This construction independent of the reference metrics.

(2)
Then the canonical $L^{2}$-metric $g_{1NS,X/Y}$ for $u, v \in H^{0}(Y,f_{\star}(K_{X/Y}+L))$ at $y \in Y_{0}=Y\sm \Dl$ is given by the expression
$$
	g_{1NS,X/Y}(u,v)(y) = \int_{X_{y}} c_{n} \sg_{u}|_{X_{y}} \wed \ol \sg_{v}|_{X_{y}} h_{y}.
$$
We may write the integrand as $(c_{n} \sg_{u} \wed \ol \sg_{v} h)|_{X_{y}}$ by an abuse of notations, where $c_{n}=(-1)^{n^{2}/2}$.
The convergence of the last integral depends on $u$ and $v$.
There may be other ways to express $g_{1NS,X/Y}$, for example $g_{1NS,X/Y}=\{g_{1NS,X/Y,y}\}_{y\in Y_{0}}$ as a family of fiberwise Hermitian forms, where $g_{1NS,X/Y,y}$ is the restriction of $g_{1NS,y}$ on the image of $f_{\star}(K_{X/Y}+L)_{y}$ in $H^{0}(X_{y}, K_{X_{y}}+L_{y})$ (the latter space can be larger in general), and where $g_{1NS,y}$ is the canonical $L^{2}$-metric on $H^{0}(X_{y}, K_{X_{y}}+L_{y})$ with respect to $h_{y}$ (it can be $g_{1NS,y}(u,u)=+\infty$ for some $u \in H^{0}(X_{y}, K_{X_{y}}+L_{y})$).
\qed
\end{rem}

\vskip2mm

\begin{rem}\label{restriction} 
As a consequence of Ohsawa-Takegoshi's extension theorem, we have the inclusion $\CI(h_{y}) \subset \CI(h) \cdot \CO_{X_{y}}$ for any $y\in Y_{0}$. 
The next set we will be interested in would be $y\in Y_{0}$ such that 
$$
	\CI(h_{y}) = \CI(h) \cdot \CO_{X_{y}}.
$$
In the algebraic case, this holds for any $y$ in a Zariski open subset (\cite[9.5.35]{PAG}).
We show here that this equality holds on a set of full measure on $Y$ (which in general is not Zariski open).
To this end, it is enough to show that $\CI(h) \cdot \CO_{X_{y}} \subset \CI(h_{y})$ for a.a.\,$y\in Y$.
We may suppose that $Y$ is a small coordinate neighborhood.
Let $U\subset X$ be a local coordinate set, which is isomorphic to a polydisk, such that $f|_{U}$ is 
(conjugate to) the projection to a sub-polydisk.
We may assume that $\CI(h)|_{U}$ is generated by a finite number of holomorphic functions $s_{1},\ldots, s_{k}\in H^{0}(U,\CO_{X})$, in particular $|s_{i}|^{2}h \in L^{1}_{loc}(U)$.
As these $s_{i}|_{X_{y}}$ generate $\CI(h) \cdot \CO_{X_{y}\cap U}$, it is enough to show that each $s_{i}|_{X_{y}} \in \CI(h_{y})|_{X_{y}\cap U}$ for a.a.\,$y\in f(U)$.
By Fubini's theorem, $|s_{i}|_{X_{y}}|^{2}h_{y} \in L^{1}_{loc}(X_{y}\cap U)$ for a.a.\,$y\in f(U)$, i.e., $s_{i}|_{X_{y}} \in \CI(h_{y})|_{X_{y}\cap U}$ for a.a.\,$y\in f(U)$.
\smallskip
\end{rem}

\begin{rem}\label{modif}
We note that $f_{\star}(K_{X/Y}+ L)$ on $Y$ and the canonical $L^{2}$-metric $g_{1}=g_{1NS,X/Y}$ on $Y_{0}$ are invariant under  bimeromorphic transform of $X$.
Let $\mu : X' \to X$ be a proper bimeromorphic morphism from a complex manifold $X'$, $f'=f\circ \mu :X' \to Y$, and let $L'=\mu^{\star}L, h'=\mu^{\star}h$.
Then it is easy to see $f'_{\star}(K_{X'/Y}+ L') =f_{\star}(K_{X/Y}+ L)$.
Suppose that $f':X'\to Y$ is smooth over $Y_{0}'\subset Y$, and let $g_{1}' =g_{1NS,X'/Y}$ be the canonical $L^{2}$-metric with respect to $h'$ on $f'_{\star}(K_{X'/Y}+ L')|_{Y_{0}'}$.
Then $g_{1}=g_{1}'$ on $Y_{0}\cap Y_{0}'$ (one should observe this in the case $Y$ is a point, and then the relative case).
As a result $g_{1}$ and $g_{1}'$ define a singular Hermitian metric of $f_{\star}(K_{X/Y}+ L)|_{Y_{0}\cup Y_{0}'}$.
(We do not know if $Y_{0}'\subset Y_{0}$ holds.) 
\end{rem}

\medskip


\subsection{Direct image sheaves of adjoint type and their metric properties} 

In this subsection we show that the direct image sheaves $f_{\star}(K_{X/Y}+ L)$ can be endowed with a singular Hermitian metric with semi-positive curvature in the sense we have defined in \ref{sHm2}.
We remark that in general, the singularities of the metric we construct cannot be avoided. The main result we obtain here
can be seen as a more general and precise form of \cite[3.5]{BPDuke}, and it implies various notions of weak positivity in algebraic geometry. The set-up and notations in what follows are the same as in \ref{relative}.

\begin{no}\label{set on Y}
To start with, we define the following sets, which play an important role for the understanding of the metrics we will define next.
We recall that $Y_0\subset Y$ is the set of regular values of $f$, and that 
$Y_h=\{ y\in Y_0;\ h_{y}=h|_{X_{y}}\not\equiv +\infty\}$.
\begin{equation*} 
\begin{aligned}
Y_{1,h,\rm ext} & :=\{y\in Y_{h};\ 
		H^{0}(X_{y},(K_{X_{y}}+L_{y})\ot \CI(h_y)) = H^{0}(X_{y},K_{X_{y}}+L_{y}) \}, 
\\
Y_{1,\rm ext} & := \{y \in Y_{0};\ 
		h^0(X_y,K_{X_y}+L_y) \text{ equals to the rank of } f_{\star}(K_{X/Y}+L) \}.
\\
Y_{1,\rm free} & := \text {The largest Zariski open subset such that } f_\star(K_{X/Y}+L) \text{ is locally free. }	\end{aligned}
\end{equation*} 
\end{no}

\noindent 
We remark that the set $Y_{1,\rm ext}$ and $Y_{1,\rm free}$ are independent of the metric $h$ and $Y_{1,\rm ext}\subset Y_{1,\rm free}$. 
We have then the following observation.

\begin{lem}\label{Y1h}

\noindent {\rm (1)} We have the inclusion $Y_{1,h,\rm ext} \subset Y_{1,\rm ext}\cap Y_h$.
\smallskip

\noindent {\rm (2)} Let $y \in Y_{1,h,\rm ext}$.
Then the equalities 
$$
f_{\star}(K_{X/Y}+L)_{y}
=H^{0}(X_{y},K_{X_{y}}+L_{y})
=H^{0}(X_{y},(K_{X_{y}}+L_{y})\ot \CI(h_y))
$$
hold. 
As a consequence of {\rm \ref{L2metric}(2)}, the restriction of the direct image $f_{\star}(K_{X/Y}+L)$ to the set $\displaystyle Y_{1,h,\rm ext}$ can be endowed with the canonical $L^{2}$-metric $g_{1NS,y}$ with respect to $h_{y}$.
Moreover $f_\star(K_{X/Y}+L)$ is locally free at $y$, and the natural inclusion $f_{\star}((K_{X/Y}+L)\ot \CI(h)) \subset f_{\star}(K_{X/Y}+L)$ is isomorphic at $y$. Moreover, the inclusion 
$Y_{1,h,\rm ext} \subset Y_{1,\rm ext}$ holds.

\smallskip

\noindent {\rm (3)} If the natural inclusion $f_{\star}((K_{X/Y}+L)\ot \CI(h)) \subset f_{\star}(K_{X/Y}+L)$ is generically isomorphic, then $Y_{1,h,\rm ext}$ is not empty and $Y\sm Y_{1,h,\rm ext}$ has measure zero. 
\smallskip

\noindent {\rm (4)}  
In conclusion, the sheaf $\displaystyle f_{\star}(K_{X/Y}+L)|_{Y_{1,\rm ext}}$ can be endowed with the canonical $L^{2}$-metric $\displaystyle \{g_{1NS,y}\}_{y\in Y_{1,\rm ext}}$; however, we note that 
we have $\det g_{1NS,y}=+\infty$ if $y \in Y_{1,\rm ext}\sm Y_{1,h,\rm ext}$.
\end{lem}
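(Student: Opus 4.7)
The plan is to prove (1) and (2) together by combining the Ohsawa--Takegoshi extension theorem with Grauert's cohomology-and-base-change theorem for the smooth proper family $f|_{Y_0}$. Fix $y\in Y_{1,h,\rm ext}\subset Y_h\subset Y_0$ and take an arbitrary $s\in H^0(X_y,K_{X_y}+L_y)$; by the defining equality of $Y_{1,h,\rm ext}$, $s$ already lies in $H^0(X_y,(K_{X_y}+L_y)\ot\CI(h_y))$. Ohsawa--Takegoshi applied over a coordinate neighborhood $U$ of $y$ (trivializing $K_Y$) then produces $\wtil s\in H^0(f^{-1}(U),K_{X/Y}+L)$ with $\wtil s|_{X_y}=s$ and $|\wtil s|^2 h\in L^1_{\rm loc}(f^{-1}(U))$, hence $\wtil s\in H^0(U,f_\star((K_{X/Y}+L)\ot\CI(h)))$. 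Thus the evaluation map $f_\star(K_{X/Y}+L)\ot\kappa(y)\to H^0(X_y,K_{X_y}+L_y)$ is surjective, and since $K_{X/Y}+L$ is locally free (hence flat over $Y$ near $X_y$), the Grauert base-change theorem promotes this surjection to an isomorphism and shows that $f_\star(K_{X/Y}+L)$ is locally free at $y$ with fiber $H^0(X_y,K_{X_y}+L_y)$. This gives $y\in Y_{1,\rm ext}$ (proving (1)) and the first two equalities of (2); Nakayama's lemma combined with the surjectivity of $f_\star((K_{X/Y}+L)\ot\CI(h))_y\to H^0(X_y,K_{X_y}+L_y)$ then forces the inclusion $f_\star((K_{X/Y}+L)\ot\CI(h))\subset f_\star(K_{X/Y}+L)$ to be an equality of stalks at $y$, completing (2).

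For (3) I would cross-cut three dense subsets of $Y$: the Zariski dense open $V\subset Y$ on which the assumed inclusion of direct image sheaves is an isomorphism; the Zariski open subset $Y_{1,\rm ext}\subset Y_0$ (Zariski open by upper-semicontinuity of $h^0$ for the smooth proper family); and the full-measure subset $Y'\subset Y_0$ from Remark \ref{restriction} on which $\CI(h_{y'})=\CI(h)\cdot\CO_{X_{y'}}$. Pick $y\in V\cap Y_{1,\rm ext}$ and select a local frame $e_1,\ldots,e_r$ of the locally free sheaf $f_\star(K_{X/Y}+L)$ on a neighborhood $U\subset V\cap Y_{1,\rm ext}$ of $y$; since the inclusion is an isomorphism on $V$, each $e_i$ is simultaneously a section of $f_\star((K_{X/Y}+L)\ot\CI(h))$, so $|e_i|^2 h\in L^1_{\rm loc}(f^{-1}(U))$. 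Applying Fubini's theorem to this finite collection yields a full-measure subset $U_0\subset U$ on which $|e_i|_{X_{y'}}|^2 h_{y'}\in L^1_{\rm loc}(X_{y'})$ for every $i$, i.e.\ $e_i|_{X_{y'}}\in H^0(X_{y'},(K_{X_{y'}}+L_{y'})\ot\CI(h)\cdot\CO_{X_{y'}})$. Restricting further to $U_0\cap Y'$ and using that base change identifies $\{e_i|_{X_{y'}}\}$ with a basis of $H^0(X_{y'},K_{X_{y'}}+L_{y'})$ on $Y_{1,\rm ext}$, I obtain $U_0\cap Y'\subset Y_{1,h,\rm ext}$, which gives both $Y_{1,h,\rm ext}\neq\emptyset$ and $Y\sm Y_{1,h,\rm ext}$ of measure zero.

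Finally, (4) is a direct corollary of (2): on $Y_{1,\rm ext}$ the base-change identification $f_\star(K_{X/Y}+L)\ot\kappa(y)\cong H^0(X_y,K_{X_y}+L_y)$ makes the fiberwise integrals $g_{1NS,y}(u,v)=\int_{X_y}c_n\, u\wed\ol v\, h_y$ into a family of $[0,+\infty]$-valued Hermitian forms on the fibers; by (2) they are finite-valued on $Y_{1,h,\rm ext}$, while for $y\in Y_{1,\rm ext}\sm Y_{1,h,\rm ext}$ the failure of the defining equality provides some $u\in H^0(X_y,K_{X_y}+L_y)$ with $g_{1NS,y}(u,u)=+\infty$, forcing $\det g_{1NS,y}=+\infty$. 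The principal difficulty is concentrated in (3): three dense subsets of mixed nature---two Zariski open, one only of full measure---must be coordinated, and the crucial step is to exchange the quantifiers ``every section, a.e.\ $y'$'' with ``a.e.\ $y'$, every section'' by working with a single local frame of $f_\star(K_{X/Y}+L)$ and invoking Fubini on that finite family of sections only.
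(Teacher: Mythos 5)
Your argument for (1), (2), and (4) is correct and follows the same route as the paper: Ohsawa--Takegoshi extension from the fiber plus cohomology and base change gives local freeness, the identification of the stalk with $H^0(X_y,K_{X_y}+L_y)$, and the equality of the two direct images at $y$ (you phrase the last step via Nakayama's lemma, the paper simply asserts it as a consequence of base change, but these are the same observation).

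For (3) your argument is a mild but genuine variant. The paper first passes to a Zariski open $W$ on which base change holds for \emph{both} $f_\star(K_{X/Y}+L)$ and for the ideal-twisted sheaf $f_\star((K_{X/Y}+L)\ot\CI(h))$, and then appeals to Remark~\ref{restriction} ($\CI(h)\cdot\CO_{X_{y'}}=\CI(h_{y'})$ for a.a.\ $y'$) to convert the twisted base-change identification into membership in $\CI(h_{y'})$. You sidestep the base-change statement for the twisted sheaf entirely by fixing a local frame $\{e_i\}$ of $f_\star(K_{X/Y}+L)$, observing that on $V$ each $e_i$ is a section of $(K_{X/Y}+L)\ot\CI(h)$, and applying Fubini directly to the finitely many integrable densities $|e_i|^2h$. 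This is a bit more hands-on and avoids discussing flatness of $\CO_X/\CI(h)$ over $Y$. One small slip: after Fubini you get $e_i|_{X_{y'}}\in H^0(X_{y'},(K_{X_{y'}}+L_{y'})\ot\CI(h_{y'}))$ (the multiplier ideal of the \emph{restricted} metric), not $\CI(h)\cdot\CO_{X_{y'}}$; these two ideals agree precisely on $Y'$, but since your Fubini conclusion already produces $\CI(h_{y'})$, intersecting further with $Y'$ is actually redundant in your version. The conclusion ``$Y\sm Y_{1,h,\rm ext}$ has measure zero'' does follow, once one notes as you implicitly do that the full-measure statement is local on $V\cap Y_{1,\rm ext}$, and $Y\sm(V\cap Y_{1,\rm ext})$ is already a proper Zariski closed set.
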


\begin{proof} The first point (1) will be established at the end of the argument for we give next for (2). 
\smallskip
 
\noindent (2)
By Ohsawa-Takegoshi, every section $u\in H^0(X_y,K_{X_y}+L_y)$ admits an extension 
$$\wtil u \in  H^{0}(X_W,(K_{X/Y}+L)\ot \CI(h))$$ 
defined on some neighborhood $X_W$ of $X_y$.
In particular the natural induced map 
$$f_{\star}\big((K_{X/Y}+L)\ot \CI(h)\big)_{y} \to H^0(X_y,K_{X_y}+L_y)$$ is surjective
(we note that the left-hand side direct image above is contained in 
$ f_{\star}(K_{X/Y}+L)_{y}$).
The cohomology base change theorem implies that $f_*(K_{X/Y}+L)$ is locally free at $y$, and 
as a consequence the natural inclusion $f_{\star}((K_{X/Y}+L)\ot \CI(h)) \subset f_{\star}(K_{X/Y}+L)$ is isomorphic at
$y$ .
This argument also shows (1).
\smallskip
 
\noindent (3)
There exists a non-empty Zariski open subset $W\subset Y$ such that the inclusion map
$$f_{\star}((K_{X/Y}+L)\ot \CI(h)) \subset f_{\star}(K_{X/Y}+L)$$ 
is isomorphic on $W$, and such that 
$$
f_{\star}((K_{X/Y}+L)\ot \CI(h))_{y} = H^{0}(X_{y},(K_{X_{y}}+L_{y})\ot \CI(h)\cdot \CO_{X_{y}}),  
f_{\star}(K_{X/Y}+L)_{y}= H^{0}(X_{y},K_{X_{y}}+L_{y})
$$ 
for any $y\in W$.
Since $\CI(h)\cdot\CO_{X_{y}}=\CI(h_{y})$ for a.a.\,$y\in Y_{0}$ in general by \ref{restriction}, so our assertion follows.
\smallskip
 
\noindent (4) This point follows directly from the preceding ones.
\end{proof}
\medskip

\noindent We recall next the following result.

\begin{thm}\cite[3.5]{BPDuke} 
Let $f:X\to Y$ and $(L,h)$ be as in {\rm\ref{relative}}.
Suppose that $f$ is smooth and $f_{\star}(K_{X/Y}+L)$ is locally free.
Then the fiberwise canonical $L^{2}$-metrics 
$$
	g_{1NS,y}(u,u):=\int_{X_{y}} c_{n}u\wed\ol u h_{y} \le +\infty
$$ 
for $u \in H^{0}(X_{y},K_{X_{y}}+L_{y})$ induces a singular Hermitian metric $g_{1NS,X/Y}$ on $f_{\star}(K_{X/Y}+L)$ with positive curvature.
\end{thm}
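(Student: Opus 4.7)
The plan is to reduce the singular-metric case to the smooth original of Berndtsson \cite[Thm.\ 1.2]{B} (which underlies \cite[3.5]{BPDuke}) via regularization of $h$ and a monotone limit argument, and then to verify that the resulting family of fiberwise $L^{2}$-inner products assembles into a singular Hermitian metric on $f_{\star}(K_{X/Y}+L)$ in the sense of Definition \ref{sHm2} with Griffiths semi-positive curvature.

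First I would localize. Shrinking $Y$, we may assume $Y$ is a coordinate polydisc, $L$ is trivial on $X|_{Y}=f^{-1}(Y)$, $h=e^{-\varphi}$ for a psh weight $\varphi$, and $f_{\star}(K_{X/Y}+L)$ admits a global frame $u_1,\dots,u_r$. Next I would regularize $\varphi$ by convolution with a standard smooth radial bump $\rho_{1/\nu}$ on a slightly shrunken polydisc $Y'\Subset Y$, producing smooth psh functions $\varphi_\nu\searrow\varphi$ pointwise. Setting $h_\nu:=e^{-\varphi_\nu}$, each $h_\nu$ is a smooth Hermitian metric on $L$ with semi-positive curvature and $h_\nu\nearrow h$.

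For each $\nu$ the smooth case of Berndtsson's theorem applies to $(L,h_\nu)$: the fiberwise inner product
$$g_\nu(u,v)(y)=\int_{X_{y}} c_{n}\,u\wed\ol v\,h_{\nu,y}$$
defines a smooth Hermitian metric on $f_{\star}(K_{X/Y}+L)|_{Y'}$ with Nakano (and in particular Griffiths) semi-positive curvature. Equivalently, for every local holomorphic section $\xi$ of the dual bundle, $\log|\xi|^{2}_{g_\nu^{\star}}$ is psh, by the characterization recalled after Definition \ref{curv}. Since $h_{\nu,y}\nearrow h_{y}$ on each smooth fiber, monotone convergence yields $g_\nu(u,u)(y)\nearrow g_{1NS,y}(u,u)$ for every holomorphic $u$, and hence by duality $|\xi|^{2}_{g_\nu^{\star}}\searrow|\xi|^{2}_{g_{1NS,X/Y}^{\star}}$. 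Consequently $\log|\xi|^{2}_{g_{1NS,X/Y}^{\star}}$ is a decreasing limit of psh functions, so on each connected component of the domain of $\xi$ it is either psh or identically $-\infty$.

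To eliminate the $-\infty$ alternative and to check $0<\det g_{1NS,X/Y}<\infty$ almost everywhere, I would appeal to Lemma \ref{Y1h}(3) together with Remark \ref{restriction}: assuming the natural inclusion $f_{\star}((K_{X/Y}+L)\ot\CI(h))\subset f_{\star}(K_{X/Y}+L)$ is generically an isomorphism (otherwise $g_{1NS,X/Y}\equiv+\infty$ and the statement is vacuous), the set $Y\sm Y_{1,h,\rm ext}$ has measure zero in $Y'$. For $y\in Y_{1,h,\rm ext}$, $g_{1NS,y}$ is a genuine positive-definite Hermitian form on the finite-dimensional space $H^{0}(X_{y},K_{X_{y}}+L_{y})$, so the Gram matrix of $u_1,\dots,u_r$ at $y$ has strictly positive finite determinant; this verifies the non-degeneracy required by Definition \ref{sHm2} and simultaneously shows $\log|\xi|^{2}_{g_{1NS,X/Y}^{\star}}$ is finite at such $y$ whenever $\xi(y)\ne 0$, ruling out the identically $-\infty$ case. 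The main obstacle is precisely this last control: ensuring that the fiberwise integrals stay finite on a set of full measure, which is why one must invoke the Fubini plus multiplier-ideal analysis of \ref{restriction}\textendash\ref{Y1h} rather than any naive pointwise argument.
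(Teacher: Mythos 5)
Your regularization step is where the argument breaks down, and the gap is genuine. You write that you would ``regularize $\varphi$ by convolution with a standard smooth radial bump $\rho_{1/\nu}$ on a slightly shrunken polydisc $Y'\Subset Y$,'' but the weight $\varphi$ lives on the total space $f^{-1}(Y)\subset X$, not on $Y$. Convolving in the base variables alone is not defined (there is no holomorphic product structure), and convolving in coordinates on $X$ fails because $f^{-1}(Y)$ is not Stein: the fibers are compact projective varieties, so the usual convolution argument that produces a \emph{psh} smoothing is unavailable. The only global regularizations one has at one's disposal are Demailly-type regularizations, and these produce smooth $\varphi_\nu\searrow\varphi$ whose curvature satisfies $\ai\Theta(L,h_\nu)\ge -C\,\omega$ with a \emph{non-zero} negative part. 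The paper itself stresses this in the proof of \ref{m=1}: it introduces an increasing family $h_{L,\ep}$ of smooth metrics, explicitly notes that ``the negative part of the curvature form associated to $(L,h_{L,\ep})$ is uniformly bounded and it converges almost everywhere to zero,'' and then uses these $h_{L,\ep}$ \emph{only} to establish upper semi-continuity --- not to transfer Berndtsson's positivity. For the sub-mean-value inequality the paper switches to the Ohsawa--Takegoshi extension (the Guan--Zhou optimal version in \ref{xyzhu}), precisely because Berndtsson's smooth theorem cannot be applied to the regularized metrics.

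So your monotone-limit step ``$\log|\xi|^2_{g_\nu^\star}$ is psh for each $\nu$'' is not available as stated; to salvage the approach you would need either a quantitative version of Berndtsson's theorem that tracks the $-\ep\omega$ error and shows the induced curvature is $\ge -\ep\cdot(\text{something fixed})$, or you must abandon the regularization route entirely and argue as in \cite{BPDuke}, i.e.\ establish the psh variation of the fiberwise Bergman kernel via Ohsawa--Takegoshi and dualize. A secondary point: you quietly inject the hypothesis that $f_\star\big((K_{X/Y}+L)\ot\CI(h)\big)\hookrightarrow f_\star(K_{X/Y}+L)$ be generically an isomorphism in order to rule out the $\equiv-\infty$ case; that hypothesis belongs to the subsequent reformulation \ref{bp35}, not to the statement you were asked to prove, which (as the remark immediately after it emphasizes) is phrased in the \emph{weaker} notion of singular Hermitian metric from \cite[p.\,357]{BPDuke} that does not impose $0<\det g<\infty$ a.e.\ and therefore tolerates a degenerate limit.
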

\smallskip 

\noindent The notion of \emph{singular Hermitian metric} $h$ on a vector bundle $E$  adopted in \cite[p.\,357]{BPDuke} is more general than \ref{sHm2}: one only assume that $h$ corresponds locally to a measurable map from the base manifold to the space of semi-positive definite Hermitian matrices
(hence, no assumption concerning the determinant metric).
The negativity of its curvature is defined by the plurisubharmonicity of
$\log |v|_{h}^{2}$ for any local section $v\in H^{0}(U,E)$.
We note that stated in this way, the notion of singular Hermitian metric makes not much sense, since any vector bundle admits $h\equiv 0$ as a singular Hermitian metric with negative curvature.

\noindent In our set-up in this paper, the previous result has the following consequence.

\begin{thm}\cite[3.5]{BPDuke}\label{bp35}
Let $f:X\to Y$ and $(L,h)$ be as in {\rm\ref{relative}}.
Suppose that $f$ is smooth, $f_{\star}(K_{X/Y}+L)$ is locally free, and that the natural inclusion 
$$
	f_{\star}((K_{X/Y}+L)\ot \CI(h)) \subset f_{\star}(K_{X/Y}+L)
$$ 
is generically isomorphic.
Then $f_{\star}(K_{X/Y}+L)$ admits a singular Hermitian metric $g_{1NS,X/Y}=\{g_{1NS,X/Y,y}\}_{y\in Y}$ with positive curvature and satisfying a {\rm base change property} on $Y_{1,\rm ext}$, which means, $g_{1NS,X/Y,y}=g_{1NS,y}$ holds on every fiber $f_{\star}(K_{X/Y}+L)_{y}$ at $y\in Y_{1,\rm ext}$, where $g_{1NS,y}$ is the canonical $L^{2}$-metric on $f_{\star}(K_{X/Y}+L)_{y} = H^{0}(X_{y},K_{X_{y}}+L_{y})$ with respect to $h_{y}$ in 
{\rm \ref{Y1h}(4)}.
\end{thm}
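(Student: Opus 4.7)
The plan is to deduce Theorem \ref{bp35} from the version of \cite[3.5]{BPDuke} recalled immediately before it, the main point being to upgrade the resulting metric from the weaker notion used in \cite{BPDuke} to a singular Hermitian metric in the precise sense of Definition \ref{sHm2}, by exploiting the generic isomorphism hypothesis to force $0<\det g<+\infty$ almost everywhere, and then to spell out the base change property on $Y_{1,\rm ext}$.

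First, I would apply the preceding theorem directly to obtain a family $\{g_{1NS,X/Y,y}\}_{y\in Y}$ assembling into a singular Hermitian metric on $f_{\star}(K_{X/Y}+L)$ with Griffiths semi-positive curvature (in the weaker sense of \cite{BPDuke}, where a priori $\det g$ need not be nonzero or finite). Since $f$ is smooth and $f_{\star}(K_{X/Y}+L)$ is locally free on $Y$ by hypothesis, this input is available.

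Next, I would verify the base change property. For $y\in Y_{1,\rm ext}$, by definition $h^{0}(X_{y},K_{X_{y}}+L_{y})$ equals the rank of $f_{\star}(K_{X/Y}+L)$, and combined with local freeness this forces the natural base change homomorphism $f_{\star}(K_{X/Y}+L)_{y}\to H^{0}(X_{y},K_{X_{y}}+L_{y})$ to be an isomorphism (Grauert's semi-continuity). Under this identification, the construction in \cite[3.5]{BPDuke} recalled above is tautologically the fiberwise canonical $L^{2}$-metric $g_{1NS,y}$ with respect to $h_{y}$, proving the base change identity $g_{1NS,X/Y,y}=g_{1NS,y}$ on $Y_{1,\rm ext}$.

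Finally, I would confirm the determinant condition required by \ref{sHm2}. Because of the generic isomorphism hypothesis, Lemma \ref{Y1h}(3) implies that $Y\setminus Y_{1,h,\rm ext}$ has Lebesgue measure zero. For $y\in Y_{1,h,\rm ext}\subset Y_{1,\rm ext}$, Lemma \ref{Y1h}(2) identifies $f_{\star}(K_{X/Y}+L)_{y}$ with $H^{0}(X_{y},(K_{X_{y}}+L_{y})\otimes \CI(h_{y}))$, so every section of this fiber is $L^{2}$-integrable against $h_{y}$; hence $g_{1NS,y}$ is a genuine, finite and positive definite Hermitian inner product, giving $0<\det g_{1NS,X/Y,y}<+\infty$. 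Since this holds on a set of full measure, $g_{1NS,X/Y}$ is indeed a singular Hermitian metric in the sense of \ref{sHm2}, and Griffiths semi-positivity is already secured from the first step.

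The main obstacle is really a bookkeeping one: one has to keep careful track of the distinction between $Y_{1,\rm ext}$ (where the base change property is claimed and where $g_{1NS,y}$ may still take the value $+\infty$ on some sections) and the strictly smaller set $Y_{1,h,\rm ext}$ (where $g_{1NS,y}$ is genuinely finite and non-degenerate). The base change statement must be phrased allowing $+\infty$ values on the complement $Y_{1,\rm ext}\setminus Y_{1,h,\rm ext}$, while the \ref{sHm2} structure is enforced only on a set of full measure, which is $Y_{1,h,\rm ext}$; reconciling these two pieces of information is the one subtle point to get right.
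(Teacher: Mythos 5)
Your proposal is correct, and it is essentially what the paper leaves implicit. The paper itself does not give a self-contained proof of \ref{bp35}: it cites \cite[3.5]{BPDuke} for the positivity and then asserts in the following remark that the base change identity ``is implicit in \cite{BPDuke}, it is not mentioned explicitly.'' Your proof fills in this gap using exactly the machinery the paper has prepared for this purpose: the weaker-sense metric from the preceding displayed theorem, Grauert's semi-continuity for the base change on the open set $Y_{1,\rm ext}$ (where $h^{0}$ is constant, equal to the rank), and Lemma \ref{Y1h}(2)--(3) to upgrade the weak singular Hermitian metric to one satisfying the $0<\det g<+\infty$ a.e.\ requirement of Definition \ref{sHm2}. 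One small point worth noting: the positivity transfers directly, since the dual metric $g^{\star}={}^{t}g^{-1}$ has $\det g^{\star}=(\det g)^{-1}$, so $0<\det g<+\infty$ a.e.\ is equivalent to $0<\det g^{\star}<+\infty$ a.e., and the psh property of $\log|v|_{g^{\star}}^{2}$ is already provided by the preceding theorem. Your final paragraph correctly identifies the only genuine subtlety: distinguishing $Y_{1,\rm ext}$ (where the base change identity holds but $\det g_{1NS,y}$ may be $+\infty$) from $Y_{1,h,\rm ext}$ (where $0<\det<+\infty$), with the latter being of full measure under the generic isomorphism hypothesis.
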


\noindent The statement concerning the equality $g_{1NS,X/Y,y}= g_{1NS,y}$ on $f_{\star}(K_{X/Y}+L)_{y}$ for $y\in Y_{1,\rm ext}$ is slightly more informative than the original one in \cite[3.5]{BPDuke}.
We stress on the fact that the statement above is implicit in \cite{BPDuke}, it is not mentioned explicitly.

\medskip

\noindent Our version of the previous result reads as follows.

\begin{thm}\label{ext} 
Let $f:X\to Y$ and $(L,h)$ be as in {\rm\ref{relative}}.
We suppose that the natural inclusion $f_{\star}((K_{X/Y}+L)\ot \CI(h)) \subset f_{\star}(K_{X/Y}+L)$ is generically isomorphic.
Then the canonical $L^{2}$-metric $g_{1NS,X/Y}$ on $f_{\star}(K_{X/Y}+L)|_{Y_0\cap Y_{1,\rm free}}$ with respect to $h$ has positive curvature by {\rm \ref{bp35}}, and it extends as a singular Hermitian metric $\wtil g_{1NS,X/Y}$ on the torsion free sheaf $f_{\star}(K_{X/Y}+L)$ with positive curvature.
In particular $\wtil g_{1NS,X/Y}$ has a base change property on $Y_{1,\rm ext}$.
\end{thm}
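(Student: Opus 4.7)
The plan begins with Theorem~\ref{bp35} applied on $Y_{0}\cap Y_{1,\rm free}$, which produces the metric $g := g_{1NS,X/Y}$ there with Griffiths semi-positive curvature. What remains is to extend $g$ across the proper analytic subset $\Delta \cap Y_{1,\rm free}$ of $Y_{1,\rm free}$: since $\CE := f_{\star}(K_{X/Y}+L)$ is torsion free we have $\codim(Y \sm Y_{1,\rm free}) \ge 2$, so by \ref{dual} together with Definition~\ref{sHm3} it suffices to construct a positively curved singular Hermitian metric on the locally free sheaf $\CE|_{Y_{1,\rm free}}$.

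The extension is done locally and through the dual metric. Fix a trivializing open $U \subset Y_{1,\rm free}$; then $g^{\star} := {}^{t}g^{-1}$ is Griffiths negatively curved on $\CE^{\star}|_{U \cap Y_{0}}$, meaning (by Definition~\ref{sHm2}) that $\psi_{\sigma} := \log |\sigma|^{2}_{g^{\star}}$ is psh on $U \cap Y_{0}$ for every $\sigma \in H^{0}(U, \CE^{\star})$. By the classical removable-singularity theorem for psh functions across a proper analytic subset, $\psi_{\sigma}$ extends as a psh function on $U$ as soon as it is locally bounded above near $U \cap \Delta$. Once such a bound is established for each $\sigma$, choosing a countable collection of $\sigma$'s separating points of $\CE^{\star}|_{U}$ and invoking Remark~\ref{a.e.}(2) upgrades the resulting a.e.\,negatively curved pointwise-defined $\wtil{g^{\star}}$ to an honestly negatively curved singular Hermitian metric; its dual $\wtil g$ is the desired positively curved extension on $\CE|_{U}$. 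Gluing the local pieces and applying Definition~\ref{sHm3} delivers the singular Hermitian metric $\wtil{g}_{1NS,X/Y}$ on the torsion free sheaf $\CE$.

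The crux, and the main obstacle, is the local upper bound on $\psi_{\sigma}$ near $\Delta$, which I would derive from the Ohsawa-Takegoshi extension theorem. Fix $y_{0} \in U \cap \Delta$, a polydisc $W \Subset U$ centered at $y_{0}$, and a holomorphic frame $\{e_{1},\ldots,e_{r}\}$ of $\CE|_{W}$; write $\sigma = \sum \sigma^{i} e_{i}^{\star}$ with holomorphic coordinates $\sigma^{i}$ bounded uniformly on $\overline W$. Reinterpreting $|\sigma|_{g^{\star}}(y) = \sup_{s\ne 0} |\sigma_{y}(s)|/|s|_{g}$ over $s \in \CE_{y}$ identified via the base change map with its image in $H^{0}(X_{y}, K_{X_{y}}+L_{y})$, the plan is to apply Ohsawa-Takegoshi to the smooth embedding $X_{y}\hookrightarrow f^{-1}(W)$, with weight coming from $h$, to produce a lift $\wtil s \in H^{0}(f^{-1}(W), K_{X/Y}+L) = H^{0}(W, \CE)$ with $\wtil s|_{X_{y}} = s$ and $\int_{f^{-1}(W)} c_{n}\wtil s \wed \ol{\wtil s}\, h \le C(W)\, |s|^{2}_{g}$. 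Expanding $\wtil s = \sum a_{i}e_{i}$ in the frame and combining this Fubini-type $L^{2}$-bound with sub-mean-value estimates for the holomorphic coefficients $a_{i}$ against a reference smooth metric on $\CE|_{W}$, one extracts pointwise bounds $|a_{i}(y)| \le C'(W) |s|_{g}$ on $W$; pairing with $\sigma$ then yields $|\sigma_{y}(s)| = |\sum \sigma^{i}(y) a_{i}(y)| \le C''(W,\sigma)\, |s|_{g}$, i.e.\ $\psi_{\sigma} \le \log C''$ on $W \cap Y_{0}$. The generic isomorphism hypothesis $f_{\star}((K_{X/Y}+L)\ot\CI(h)) = \CE$ is exactly what guarantees the finiteness of the $L^{2}$-integrals involved. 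Finally, the base change property on $Y_{1,\rm ext}$ is automatic from Theorem~\ref{bp35}, because the extended metric $\wtil{g}_{1NS,X/Y}$ agrees with $g$ on $Y_{0}\cap Y_{1,\rm free} \supset Y_{1,\rm ext}$.
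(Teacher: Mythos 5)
Your overall strategy coincides with the paper's up to the key estimate: reduce to the locally free locus (codimension $\geq 2$ of the complement), dualize, and show that $\psi_{\sigma}=\log|\sigma|^{2}_{g^{\star}}$ is locally bounded above near $\Delta$, so that it extends across as a psh function. Where you diverge is in how this upper bound is obtained, and that is where a circularity sinks the argument. The Ohsawa--Takegoshi extension $\wtil s$ of $s\in\CE_{y}$ comes with the bound $\int_{f^{-1}(W)}c_{n}\wtil s\wedge\ol{\wtil s}\,h\le C(W)|s|^{2}_{g}$, and after Fubini this reads $\int_{W}g(\wtil s,\wtil s)(t)\,d\lambda(t)\le C(W)|s|^{2}_{g}$ with $g(\wtil s,\wtil s)(t)=\sum a_{i}(t)\ol{a_{j}(t)}g_{ij}(t)$. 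To pass from this weighted $L^{2}$-average to a pointwise bound $|a_{i}(y)|\le C'(W)|s|_{g}$ you must convert the $g$-weighted quadratic form into a Euclidean one, i.e.\ you need a uniform lower bound on the smallest eigenvalue $\lambda_{1}(g(t))$ as $t\to\Delta$. But that lower bound is precisely what the claimed estimate $|\sigma|_{g^{\star}}\le C$ amounts to (the smallest eigenvalue of $g$ equals the inverse of the largest of $g^{\star}$), so the sub-mean-value step is assuming what it is supposed to prove. The same circularity appears if you instead run Cauchy--Schwarz on $\langle\sigma,\wtil s\rangle$: you get $|\sigma_{y}(s)|\le C'\big(\sup_{W'}|\sigma|_{g^{\star}}\big)|s|_{g}$, which is vacuous.

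The paper breaks this deadlock by a qualitatively different argument, due in essence to Fujita. After reducing (via a weakly semi-stable modification, a step your proposal omits and which the $L^{2}$-estimate genuinely relies upon to control the local form of $f$) to the case where $f$ is semi-stable, it takes a global frame $e_{1},\dots,e_{r}$ of $\CE$ and, for each $s$ on the unit sphere $S^{2r-1}$, considers the nowhere-vanishing section $u_{s}=\sum s_{i}e_{i}\in H^{0}(Y,\CE)$. Nonvanishing of $u_{s}$ at $y_{0}$ propagates to nonvanishing of $u_{s}(dt)$ along some component $B$ of $f^{\star}\Delta$, and a direct coordinate computation near a general point of $B\cap f^{-1}(y_{0})$ then produces a \emph{lower} bound $\int_{X_{t}}c_{n}\sigma_{s}\wedge\ol{\sigma_{s}}\,h_{t}\ge aA^{2}(2\pi\ep^{2})^{n}$ uniform in $s$ near $s_{0}$ and $t$ near $y_{0}$; compactness of $S^{2r-1}$ then gives the desired uniform bound $\lambda_{1}(g)\ge c_{0}$. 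The crucial point is that the lower bound on the fiberwise $L^{2}$-norm is proved directly rather than inferred from an upper bound on an extension, which is why no circularity appears. To repair your argument you would need to produce the lower bound on $\lambda_{1}(g)$ by some other means; Ohsawa--Takegoshi runs in the wrong direction here.
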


The rest of this subsection is devoted to the proof of \ref{ext}.
We start by fixing some notations, and making some standard reductions.
Let 
$$
	E:=f_{\star}(K_{X/Y}+L), \ \ \  \text{resp.\ $g:=g_{1NS,X/Y}$},
$$
be the direct image of the bundle $K_{X/Y}+L$, resp.\ the metric on $E|_{Y_0\cap Y_{1,\rm free}}$.
Noting that we have $\codim (Y\sm Y_{1,\rm free})\ge 2$, it is enough to show that $g$ extends to a singular Hermitian metric on $E|_{Y_{1,\rm free}}$ with
positive curvature by definition of curvature positivity of singular Hermitian metric of torsion free sheaves (see \ref{dual}).
Thus we may assume from the start that $E$ is locally free, i.e., $Y_{1,\rm free}=Y$.
We also note that the extension of $g$ is  a local question on $Y$.
Moreover, we can freely restrict ourselves on a Zariski 
open subset $Y'' \subset Y$ with $\codim (Y\sm Y'')\ge 2$, since the extended metric is unique.

In conclusion, it is enough to consider the following local setting, near the general point of 
a codimension 1 subset $\Dl \subset Y$ (possibly after taking a modification of $X$ along $f^{-1}(\Del)$; such operations do not affect $E$ and the metric $g$, see \ref{modif}).

\begin{setup} \label{local}
Let $f : X \to Y$ and $(L, h)$ be as in \ref{relative}.
In addition to the assumption in \ref{ext}, let us assume further the following

\smallskip
\noindent
(1)
The base $Y$ is a unit polydisk in $\BC^m$ with coordinates $t = (t_1, \ldots, t_m)$.
Let $dt = dt_1 \wed \ldots \wed dt_m \in H^0(Y, K_Y)$ be a global frame of $K_{Y}$.

(1.i) $f$ is flat, and its discriminant locus $\Dl \subset Y$ is given by the equation $\Dl = \{t_m = 0\}$ (or $\Dl = \emptyset$), $Y_{0}=Y\sm \Dl$,

(1.ii) the effective divisor $f^*\Dl$ has a normal crossing support, and

(1.iii) the morphism $\Supp f^*\Dl \to \Dl$ is relative normal crossing (see \ref{rnc}).

\smallskip
\noindent
(2)
(2.i) $E=f_{\star}(K_{X/Y}+L) \cong \CO_Y^{\oplus r}$, i.e., globally free and trivialized of rank $r > 0$.

(2.ii)@For every point $y\in Y$, the naturally induced homomorphism $E/(\fm_{Y,y}E) \to H^{0}(X_{y},$ $(K_{X/Y}+L)\ot \CO_{X_{y}})$ is injective, where $\fm_{Y,y} \subset \CO_{Y}$ is the maximal ideal and $X_{y}$ is the scheme theoretic fiber of $y$; $X_{y}$ is defined by the ideal $\fm_{Y,y}\CO_{X}$ (see \ref{inj}).

\smallskip
\noindent
(3)
Let $g$ be the canonical $L^{2}$-metric on $E$ over $Y_{0}$, which is positively curved by \ref{bp35}.

\smallskip
\noindent
We may replace $Y$ by slightly smaller polydisks, or may assume everything is defined over a slightly larger polydisk. 
\qed
\end{setup}

\begin{dfn}\label{rnc}
(1) In the set-up \ref{local} above, by \emph{$\Supp f^*\Dl \to \Dl$ is relative normal crossing} we  mean that locally near every point $x \in X$, there exists a local coordinate $(U; z = (z_1, \ldots, z_{n+m}))$ such that $f|_U$ is given by $t_1 = z_{n+1}, \ldots, t_{m-1} = z_{n+m-1}$, $t_m = z_{n+m}^{b_{n+m}} \prod_{j=1}^{n} z_j^{b_j}$ with non-negative integers $b_j$ and $b_{n+m}$.
\smallskip

\noindent (2) The map $f$ is said to be {\it semi-stable}, if moreover $f^{\star}\Dl$ is reduced (in the notations above, this simply means that $b_{j}\leq 1$).
\qed
\end{dfn}

We explain \ref{local}(2.ii) in a slightly general context.

\begin{lem}\label{inj}
Let $f:X\to Y$ be a proper surjective morphism of complex manifolds, 
$F$ a torsion free sheaf on $X$ which is flat over $Y$, and let $E=f_{*}F$ be its direct image, which is merely torsion free.
Then there exists a Zariski closed subset $Z\subset Y$ with $\codim_{Y}Z \ge 2$ such that $E|_{Y\sm Z}$ is locally free, and such that
for every point $y\in Y \sm Z$, the natural homomorphism $E/(\fm_{Y,y}E) \to H^{0}(X_{y},F_y)$ is injective, where $F|_{X_{y}}=F\ot \CO_{X_{y}}$.
\end{lem}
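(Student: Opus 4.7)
The plan is to realize $E=f_\star F$ as the kernel of a morphism between locally free sheaves, and then to apply twice the standard fact that a torsion free coherent sheaf on a smooth complex manifold is locally free outside a closed analytic subset of codimension at least two.

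First I would invoke a local representing complex for the derived direct image. Since $f$ is proper and $F$ is flat over $Y$, on any sufficiently small Stein open subset $U\subset Y$ there exists a bounded complex $K^\bullet=(K^0\to K^1\to\cdots)$ of free $\CO_U$-modules of finite rank such that, for every coherent $\CO_U$-module $G$, the natural map $R^i f_\star F\ot G \to H^i(K^\bullet\ot G)$ is an isomorphism. In particular $E|_U=\ker(K^0\to K^1)$, and the image sheaf $\CQ:=\mathrm{im}(K^0\to K^1)$ sits inside the free sheaf $K^1$, so that $\CQ=K^0/E$ is coherent and torsion free.

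Next I would apply to both $E$ and $\CQ$ the codimension-two fact recalled above: at the generic point of any irreducible hypersurface of $Y$ the local ring is a one-dimensional regular local ring, i.e.\ a DVR, hence every finitely generated torsion free module over it is free, which prevents the non-locally-free locus of any torsion free coherent sheaf on $Y$ from containing a hypersurface. This produces analytic subsets $Z_1,Z_2\subset Y$ of codimension $\ge 2$ outside which $E$, respectively $\CQ$, is locally free. Set $Z:=Z_1\cup Z_2$; since the construction is local and the resulting $Z$ is canonically determined by $E$ and $\CQ$, this assembles globally.

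Finally, for $y\in Y\sm Z$, tensoring the short exact sequence $0\to E\to K^0\to \CQ\to 0$ with the residue field $\kappa(y)=\CO_{Y,y}/\fm_{Y,y}$ yields the exact sequence
$$
\mathrm{Tor}_1^{\CO_{Y,y}}(\CQ,\kappa(y))\lra E/(\fm_{Y,y}E)\lra K^0\ot \kappa(y),
$$
in which the $\mathrm{Tor}$ term vanishes because $\CQ$ is locally free at $y$. Thus $E/(\fm_{Y,y}E)\to K^0\ot\kappa(y)$ is injective, and since this arrow factors as $E/(\fm_{Y,y}E)\to H^0(X_y,F_y)\hookrightarrow K^0\ot\kappa(y)$ (the identification $H^0(X_y,F_y)=\ker(K^0\ot\kappa(y)\to K^1\ot\kappa(y))$ being furnished by the $Y$-flatness of $F$ through the representing complex), the base change map $E/(\fm_{Y,y}E)\to H^0(X_y,F_y)$ is also injective, as required. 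The main obstacle I anticipate is the existence of the representing complex $K^\bullet$ in the complex-analytic setting: this is a classical consequence of Grauert's direct image theorem together with the flatness of $F$ over $Y$, but it should be invoked as a black box rather than re-proved here.
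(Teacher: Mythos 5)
Your argument is correct, and it follows a genuinely different route from the paper's. The paper argues inductively on $\dim Y$: it first shrinks to where $E$ is locally free and base change is an isomorphism, localizes at a smooth hypersurface $Z=\{s_1=0\}$ in the bad locus, pushes forward the sequence $0\to F(-X_1)\to F\to F|_{X_1}\to 0$ to get an injection $E/(s_1E)\hookrightarrow E_1:=(f_1)_\star(F|_{X_1})$, and then invokes the inductive hypothesis over $Z$ together with the identification $E/(\fm_{Y,y}E)=E'/(\fm_{Z,y}E')$. Your proof bypasses the induction entirely: after reducing to the representing complex $K^\bullet$ of free modules, the single short exact sequence $0\to E\to K^0\to\CQ\to 0$ plus the observation that $\CQ\subset K^1$ is torsion free lets you kill $\mathrm{Tor}_1(\CQ,\kappa(y))$ outside a codimension-two set and conclude directly. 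This is shorter and arguably more transparent: the only geometry that enters is the fact that a torsion free coherent sheaf on a manifold is locally free outside codimension two, used twice (for $E$ and for $\CQ$), whereas the paper's argument trades the appeal to $\CQ$ for an explicit geometric reduction to a hypersurface and a slightly delicate step ensuring that the sheaf injection $E/(s_1E)\hookrightarrow E_1$ is a bundle injection. Both hinge on the Grauert/Grothendieck representing complex, so the external input is the same; your version localizes the hard work into a clean homological-algebra step. One small point to make explicit when writing this up: the local sets where $\CQ$ fails to be locally free depend on the chosen complex $K^\bullet$, but they are each closed of codimension $\ge 2$ and the desired injectivity statement for $y$ is intrinsic, so passing to a locally finite Stein cover and taking the (locally finite, hence analytic) union of the bad loci yields a global $Z$ of codimension $\ge 2$.
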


\begin{proof}
Our assertion is local on $Y$ around complement of some codimension 2 Zariski closed subsets of $Y$. 
Let $m=\dim Y\ge 1$.
As $E$ is torsion free, we may suppose $E$ is locally free on $Y$ by removing codimension 2 subsets in $Y$. 
By general results (such as the upper-semi-continuity theorem, and the cohomology and base change theorem), there exist Zariski closed subsets $Z\subset Y$ with $\codim_{Y}Z \ge 1$ such that for every point $y\in Y \sm Z$, the natural map $E/(\fm_{Y,y}E) \to H^{0}(X_{y},F_y)$ is isomorphic.
By removing codimension 2 subsets in $Y$ and replacing $Y$, we may suppose that $Z$ is smooth, irreducible and of $\dim Z=m-1$.
In fact, we may suppose $Z=\{s_{1}=0\}$ in a coordinate of $Y \subset (\BC^{m},(s_{1}, \ldots, s_{m}))$.
Denote by $f_{1}: X_{1}\to Z$ be the induced morphism, where $X_{1}$ is the scheme theoretic inverse image of $Z$, namely in this case $X_{1}$ is a divisor in $X$ defined by $\{f^{*}s_{1}=0\}$.
In general $F\ot \CO_{X_{1}}$ is flat over $Z$, and $E_{1}:=(f_{1})_{*}(F\ot \CO_{X_{1}})$ is a coherent sheaf on $Z$.
If $Y$ is a curve, then $Z$ is a point $y \in Y$ and $E_{1}=H^{0}(X_{y},F_y)$.

We have an exact sequence $0\to F \ot \CO_{X}(-X_{1}) \to F \to F \ot \CO_{X_{1}} \to 0$ on $X$, and an exact sequence
 $0\to f_{*}(F \ot \CO_{X}(-X_{1})) \to f_{*}F \to f_{*}(F \ot \CO_{X_{1}})$ on $Y$.
Noting that $\CO_{X}(-X_{1})=f^{*} \CO_{Y}(-Z)$ is an invertible sheaf, the last exact sequence turns to be
$0\to \CO_{Y}(-Z)\ot E \to E \to E_{1}$.
Thus the induced homomorphism $E/(s_{1}E) \to E_{1}$ is injective.
If $Y$ is a curve, we are done at this point.
We suppose $m\ge 2$.

By removing codimension 1 subsets in $Z$ and replacing $Y$, we may suppose that $E_{1}$ is locally free on $Z$, more strongly  for every point $y\in Z$, the natural map $E_{1}/(\fm_{Z,y}E_{1}) \to H^{0}(X_{y},F_y)$ is isomorphic, and that the map $E':=E/(s_{1}E) \to E_{1}$ is injective as a vector bundle homomorphism (not only as a sheaf homomorphism).
In particular $E'$ can be seen as a direct summand of $E_{1}$ (as we are in local situation).
In general the rank of $E'$ would be strictly smaller than that of $E_{1}$.
For every point $y\in Z$, we thus have an injection $E'/(\fm_{Z,y}E') \to H^{0}(X_{y},F_y)$ as a composition.
As $E'/(\fm_{Z,y} E') =E/(\fm_{Y,y}E)$ (this can be checked by taking a local coordinate), our claim is proved.
\end{proof}

\begin{lem}
In order to prove Theorem {\rm \ref{ext}}, we can further suppose that $f$ is semi-stable in addition to {\rm \ref{local}}.
\end{lem}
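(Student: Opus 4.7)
The plan is to perform a semi-stable reduction of the base along $\Dl$, prove the desired extension in the semi-stable case, and then transport the result back via the pull-back property \ref{pullback}(2). Shrinking $Y$ if necessary, let $N$ be a positive integer divisible by every multiplicity appearing in $f^\star\Dl$, and let $\tau:Y'\to Y$ be the cyclic cover given in coordinates by $(t_1,\dots,t_{m-1},s)\mapsto(t_1,\dots,t_{m-1},s^N)$, ramified exactly along $\Dl$. Let $X'\to X\times_Y Y'$ be a resolution of the main component of the fiber product, chosen by the standard semi-stable reduction procedure (together with the normal crossing hypothesis \ref{local}(1.ii)--(1.iii) which is already at our disposal) so that the induced morphism $f':X'\to Y'$ is semi-stable in the sense of \ref{rnc}(2). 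Set $\mu:X'\to X$, $L':=\mu^\star L$ and $h':=\mu^\star h$; note $h'$ remains a singular Hermitian metric with semi-positive curvature.

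The assumptions of \ref{local} transfer to $f':X'\to Y'$ with $(L',h')$, away from a subset of codimension $\ge 2$ in $Y'$. Indeed, by flat base change $\tau^\star E\cong f''_\star(K_{X\times_Y Y'/Y'}+L\boxtimes 1)$ where $f''$ is the base change of $f$, and comparison with $E':=f'_\star(K_{X'/Y'}+L')$ through the birational morphism $X'\to X\times_Y Y'$ shows that $E'$ is torsion-free and coincides with $\tau^\star E$ generically. The generic isomorphism $f'_\star((K_{X'/Y'}+L')\ot\CI(h'))\subset E'$ is inherited from the analogous generic isomorphism for $f$ via flatness of $\tau$ and the fact that $\mu$ is an isomorphism on a dense open subset of $X'$. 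Hence \ref{bp35} endows $E'$ with the canonical $L^2$-metric $g'_{1NS,X'/Y'}$ over $Y'_0:=\tau^{-1}(Y_0)$, and on $Y'_0$ --- where $\tau$ is étale and $\mu$ is an isomorphism --- the fibers of $f'$ are identified with those of $f$ in such a way that the canonical isomorphism $\tau^\star E|_{Y'_0}\cong E'|_{Y'_0}$ identifies $\tau^\star g_{1NS,X/Y}$ with $g'_{1NS,X'/Y'}$.

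Granting Theorem \ref{ext} in the semi-stable case applied to $f'$, we obtain an extension $\wtil g'_{1NS,X'/Y'}$ as a positively curved singular Hermitian metric on the torsion-free sheaf $E'$ over all of $Y'$. Through the identification $\tau^\star E|_{Y'_0}\cong E'|_{Y'_0}$ and the uniqueness of the privileged extension of singular Hermitian metrics across sets of codimension $\ge 2$ \ref{dual}, this yields an extension of $\tau^\star g_{1NS,X/Y}$ as a positively curved singular Hermitian metric on $\tau^\star E$ on $Y'$. Lemma \ref{pullback}(2), applied to the proper surjective morphism $\tau:Y'\to Y$, then descends this extension to the desired positively curved singular Hermitian extension of $g_{1NS,X/Y}$ on $E$ over $Y$.

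The main obstacle will be making the comparison in Step 2 rigorous: one has to justify that the canonical $L^2$-metrics on $E'$ and $\tau^\star E$ literally agree over $Y'_0$ (rather than merely up to a harmless factor) and that the hypothesis concerning the inclusion $f_\star((K_{X/Y}+L)\ot\CI(h))\subset f_\star(K_{X/Y}+L)$ being generically isomorphic survives the base change by $\tau$ and the resolution $\mu$. Both points rest on $\tau$ being étale on $Y'_0$ and $\mu$ being an isomorphism on the preimage of the general fiber, together with the behavior of the relative canonical bundle and of multiplier ideals under these elementary operations.
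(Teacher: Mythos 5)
Your overall strategy — semi-stable reduction of the base by a cyclic cover $\tau:Y'\to Y$, resolving the main component of the fiber product to get a semi-stable $f':X'\to Y'$, proving the extension there, and descending by $\ref{pullback}(2)$ — is exactly the paper's approach, and much of the set-up you describe is correct. The weak spot is the step where you pass from the extended positively curved metric on $E':=f'_\star(K_{X'/Y'}+L')$ over $Y'$ back to a positively curved metric on $\tau^\star E$ over $Y'$. You justify this by the identification $\tau^\star E|_{Y'_0}\cong E'|_{Y'_0}$ together with ``uniqueness of the privileged extension across sets of codimension $\ge 2$'' (\ref{dual}). But $Y'\setminus Y'_0=\Dl'=\tau^{-1}(\Dl)$ has codimension \emph{one} in $Y'$, not two, so \ref{dual} does not apply; and over $\Dl'$ the sheaves $E'$ and $\tau^\star E$ genuinely may differ, so the metric on $E'$ does not automatically restrict to one on $\tau^\star E$.

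What makes the transfer work is a piece of structure you implicitly have but never invoke: by Viehweg's base-change comparison (\cite[Lemma 3.2]{Vi1}, \cite[V.3.30]{Nbook}) there is a global inclusion of sheaves $\ga:E'\hookrightarrow\tau^\star E$ over $Y'$ which is an isomorphism over $Y'\setminus\Dl'$. Once you have the extended positively curved metric $\wtil g'$ on $E'$, you should apply the functoriality lemma \ref{subquot2}(3) to this generically isomorphic (in particular generically surjective) map $\ga$: it produces a naturally induced positively curved singular Hermitian metric on $\tau^\star E$, which agrees with $\tau^\star g_{1NS,X/Y}$ over $Y'_0$ precisely because $\ga$ is an isometry there. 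That is the step the paper performs and that replaces your (inapplicable) codimension-$\ge 2$ argument. Your closing caveat about ``Step 2'' focuses on the comparatively benign point of the isometry over $Y'_0$; the real obstruction is the sheaf-level transfer across $\Dl'$.
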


\begin{proof}
We use the weakly semi-stable reduction (or the semi-stable reduction in codimension 1)
(\cite[Ch.\ II]{KKMS}\,\cite[\S 7.2]{KM}\,\cite[\S 6.4]{Vbook}).
Let 
$$
	f^*\Dl = \sum\nolimits_j b_j B_j
$$ 
be the prime decomposition.
Let $Y'$ be another copy of a unit polydisk in $\BC^m$ with coordinates
$t' = (t_1', \ldots, t_{m-1}', t_m')$. 
Let $\ell$ be the least common multiple of all $b_j$.
Let $\tau : Y' \to Y$ be a ramified covering given by
$(t_1', \ldots, t_{m-1}', t_m') \mapsto (t_1', \ldots, t_{m-1}', (t'_m)^\ell)$,
and $X^{\circ} = X \times_Y Y'$ be the fiber product.
Let $\nu : X' \to X^{\circ}$ be the normalization, and 
$\mu : X'' \to X'$ be a resolution of singularities,
which is biholomorphic on the smooth locus of $X'$.
We encode next the induced morphisms in the following diagram.
\begin{equation*} 
\begin{CD}
    X''  @>{\mu}>>  X' @>{\nu}>> X^{\circ}= X \times_Y Y' @>{\tau^{\circ}}>> X \\
    @Vf''VV           @Vf'VV           @V{f^{\circ}}VV              @VVfV  \\ 
    Y'   @=   Y'   @=  Y'           @>>{\tau}>   Y
\end{CD}
\end{equation*} 
We let also $\tau'' : X'' \to X$ be the composition, $L'' = {\tau''}^*L$,
and $h''= {\tau''}^*h$ the induced singular Hermitian metric on $L''$.
We set $\Dl' =\Supp \tau^{\star}\Dl=\{t_{m}'=0\}\subset Y'$.
The fact that we can take a resolution $\mu : X'' \to X'$ so that $f'': X'' \to Y'$ is semi-stable in codimension 1 is by no means ``obvious", but it is established in the references quoted at the beginning of the proof.

By \cite[Lemma 3.2]{Vi1}\,\cite[V.3.30]{Nbook} (we note that adding $L'' = {\tau''}^*L$ is harmless), we then have a natural inclusion homomorphism
$$
  \ga : f''_{\star}(K_{X''/Y'}+ L'') \lra \tau^*f_{\star}(K_{X/Y}+L),
$$
which is isomorphic over $Y' \sm \Dl'$.
In particular $f''_{\star}(K_{X''/Y'}+ L'')$ is locally free over $Y' \sm \Dl'$.
The natural inclusion $f''_{\star}((K_{X''/Y'}+ L'')\ot \CI(h'')) \subset f''_{\star}(K_{X''/Y'}+ L'')$ is also generically isomorphic, and $E'':=f''_{\star}(K_{X''/Y'}+ L'')$ admits also the canonical $L^{2}$-metric $g'=g_{1NS,X''/Y'}$ over $Y' \sm \Dl'$ with respect to $h''$ (by \ref{bp35}).
Moreover the inclusion $\ga$ induces an isometry over $Y' \sm \Dl'$, i.e.,
$g' = \ga^{\star}\tau^{\star}g$ as a singular Hermitian metric on $E''|_{Y' \sm \Dl'}$.
In fact, since $\tau : Y'\sm \Dl' \to Y \sm \Dl$ is locally biholomorphic, the coordinate free description of the $L^{2}$-metric in \ref{L2metric} explain the isometry.

By \ref{pullback}, it is enough to prove that the singular Hermitian metric $\tau^{\star}g$ over $Y'\sm \Dl'$ can be extend as a singular Hermitian metric on $\tau^{\star}E= \tau^*f_{\star}(K_{X/Y}+L)$ over $Y'$ with positive curvature.
To this end, we use  \ref{subquot2}(3), which shows that it is enough to prove that the singular Hermitian metric $g'$ over $Y'\sm \Dl'$ can be extend as a singular Hermitian metric on $E''=f''_{\star}(K_{X''/Y'}+ L'')$ over $Y'$ with positive curvature. 
 Once this is established, we apply  \ref{subquot2}(3), which shows that the extension of $g'$ with positive curvature induces a singular Hermitian metric $g^{+}$ on $\tau^{\star}E= \tau^*f_{\star}(K_{X/Y}+L)$ with positive curvature.
The metric $g^{+}$ extends the pull-back $\tau^{\star}g$, because of $g'= \ga^{\star}\tau^{\star}g$ on $Y' \sm \Dl'$ (and because of the proof or construction of  \ref{subquot2}(3)).
And finally, in order to extend $g'$ over $Y'$
(by removing codimension two subvarieties of $Y'$ and localizing again), we may suppose that $f''$ is semi-stable and $E''$ satisfies \ref{local}(2).
\end{proof}

\begin{prop} \label{bdd}
Theorem {\rm \ref{ext}} holds true when $f$ is semi-stable (hence in general, by the result above).
\end{prop}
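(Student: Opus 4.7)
The plan is to extend the canonical $L^{2}$-metric $g = g_{1NS,X/Y}$ from $Y_{0} = Y \sm \Del$ to all of $Y$ (where $\Del = \{t_{m}=0\}$) while preserving positive curvature. Since $E = f_{\star}(K_{X/Y}+L) \cong \CO_{Y}^{\oplus r}$ is globally trivialized with frame $e_{1},\ldots,e_{r}$ and dual frame $\xi_{1},\ldots,\xi_{r}$, Definition \ref{curv}(3) tells us that $g$ is positively curved if and only if, for every constant section $\xi = \sum a_{i}\xi_{i}$ of $E^{\star}$, the function $\log|\xi|_{g^{\star}}^{2}$ is psh. These functions are already psh on $Y_{0}$ by \ref{bp35}, and $\Del$ is a hypersurface (hence pluripolar); so by the standard extension theorem for psh functions across pluripolar sets, each extends as a psh function on $Y$ provided it is locally bounded above near $\Del$. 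The whole problem therefore reduces to establishing this local upper bound.

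By Cramer's rule, $g^{\star}=g^{-1}$ has matrix entries $(g^{\star})_{ij}=\mathrm{cof}_{ji}(g)/\det g$, and it is thus enough to prove: (a) the entries $g_{ij}(y)=\int_{X_{y}}c_{n}\sg_{e_{i}}\wed\ol{\sg_{e_{j}}}h$ are locally bounded above near $\Del$; and (b) $\det g$ is locally bounded below by a positive constant near $\Del$. The semi-stability hypothesis (i.e., $f^{\star}\Del$ reduced normal-crossing relative to $\Del$, \ref{rnc}(2)) is essential here: it ensures that the sections $\sg_{e_{i}}$, a priori defined only on $X\sm f^{-1}(\Del)$, extend as honest global sections of $K_{X/Y}+L$ on $X$ and restrict to sections of the dualizing sheaf $K_{X_{y}}+L_{y}$ on every fiber $X_{y}$ (including those sitting over $\Del$).

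For (a), combining the semi-stable structure with the hypothesis that the inclusion $f_{\star}((K_{X/Y}+L)\ot\CI(h))\hookrightarrow E$ is generically isomorphic, and applying an Ohsawa--Takegoshi type extension from generic smooth fibers with quantitative $L^{2}$ bounds, one obtains the uniform upper bound on each $g_{ij}(y)$ on compact neighborhoods of points of $\Del$. For (b), the injectivity statement \ref{local}(2.ii) ensures that $\sg_{e_{1}}|_{X_{y}},\ldots,\sg_{e_{r}}|_{X_{y}}$ remain linearly independent on \emph{every} fiber, so that the Gram determinant $\det(g_{ij})(y)$ is strictly positive at every $y\in Y$; a lower-semi-continuity argument, supported by the estimates of (a), then yields the desired positive lower bound on compact sets. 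I expect the principal difficulty to reside in step (a), where controlling the $L^{2}$-integrals uniformly across degenerating fibers requires a delicate interplay of the Ohsawa--Takegoshi theorem, the semi-stable structure of $f$, and careful handling of the potential singularities of $h$ along $f^{-1}(\Del)$.
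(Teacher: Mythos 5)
Your reduction has a genuine gap. You reduce, via Cramer's rule $g^{\star}=g^{-1}=\mathrm{adj}(g)/\det g$, to two claims: (a) the entries $g_{ij}(y)$ are locally bounded \emph{above} near $\Del$, and (b) $\det g$ is locally bounded \emph{below} near $\Del$. Claim (a) is false in general. Already for a family of elliptic curves degenerating to a nodal curve (with $L$ trivial, $r=1$), the period integral $g_{11}(t)=\int_{X_t}c_1\sg_{e_1}\wed\ol{\sg_{e_1}}$ blows up like $-\log|t|$ as $t\to 0$; in higher rank the diagonal entries blow up just the same. So one cannot hope to bound the cofactors of $g$ uniformly near $\Del$, and the decomposition ``(a) + (b)'' collapses. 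Note also that invoking Ohsawa--Takegoshi for (a) is off target: an extension theorem produces \emph{new} sections with controlled $L^2$ norm; it cannot cap the $L^2$ norms of a \emph{given} frame over degenerating fibers.

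What the paper actually proves, and what suffices, is a uniform positive lower bound $\lam_1(t)\geq c_0$ on the \emph{smallest eigenvalue} of $g(t)$ as $t$ ranges over $Y_0$ near $\Del$. This is strictly weaker than (a)+(b) and exactly strong enough: $\lam_1\geq c_0$ forces $|\xi|^2_{g^\star}\leq c_0^{-1}|\xi|^2$, hence $\log|\xi|^2_{g^\star}$ is locally bounded above, and Hartogs/Grauert--Remmert extension of psh functions across the pluripolar set $\Del$ does the rest. The ratio $\mathrm{cof}_{ji}(g)/\det g$ remains bounded even when both numerator and denominator diverge, precisely because the divergence cancels eigenvalue-wise; your decomposition loses this cancellation. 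Moreover, the lower bound on $\lam_1$ is \emph{not} obtained from Ohsawa--Takegoshi or any extension-theoretic input. It is Lemma \ref{Ft12} (via \ref{Ft11}), a purely local and elementary estimate in the Fujita spirit: given the compact sphere $S^{2r-1}$, for each $s_0$ one uses \ref{local}(2.ii) and an induction on $\dim Y$ to find a component $B_j$ of $f^{\star}\Del$ along which $u_{s_0}(dt)$ does not vanish identically, chooses a general point $x_0\in B_j\cap f^{-1}(y)$ where the semi-stable structure gives coordinates adapted to $f$, and then bounds $\int_{X_t}c_n\sg_s\wed\ol{\sg_s}h$ from below by an explicit $aA^2(2\pi\ep^2)^n$ uniformly in $(s,t)$ near $(s_0,y)$. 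Your intuition about \ref{local}(2.ii) giving pointwise linear independence is pointing in this direction, but ``$\det g(y)>0$ pointwise plus some semi-continuity'' is not a proof; you need the quantitative, uniform-in-$t$ estimate, and that estimate is the actual content of the proposition.
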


\begin{proof} In what follows, we will assume that the relations (1)--(3) in 
\ref{local} are verified, and that the map $f$ is semi-stable. This will allow us to perform a few explicit computations in order to understand the behavior of the metric near $\Dl$.

By \ref{bp35}, the metric $g^{\star}$ is negatively curved on $Y_{0}$.
Our goal is to extend it on $Y$ with negative curvature.
We consider any open set $U\subset Y$ and a non-zero $\xi \in H^{0}(U, E^{\star})$.
We may assume that $U=Y$;
then by hypothesis $\log |\xi |_{g^{\star}}^{2}$ is psh on $Y_{0}$.
The conclusion of {\rm \ref{ext}} is that the function $\log |\xi |_{g^{\star}}^{2}$ 
extends as a psh function on $Y$.
As it is well known (by Hartogs theorem) the extension property holds 
provided that we can show that there exists a constant $c_{\xi}>0$ such that
$|\xi |_{g^{\star}}^{2}< c_{\xi}$ on $Y_{0}$ (possibly after shrinking $Y$).

We consider a global frame $e_1, \ldots, e_r \in H^0(Y, E)$ and we trivialize $E=Y\times \BC^{r}$ via this frame.
With respect to this trivialization, the metric $g$ can be written as a matrix valued function $g=(g_{i\ol j})$ on $Y_{0}$.
For every $t\in Y_{0}$, we denote by
$0<  \lam_{1}(t) \le \lam_{2}(t) \le \ldots \le  \lam_{r}(t) \le +\infty$ the eigenvalues of $g(t)=(g_{i\ol j}(t))$, see \ref{eigenbd}.
Then we have $\lam_{r}(t) <+\infty$ for every $t \in Y_{1,h,\rm ext}$ by definition \ref{set on Y} of $h_{1,h,\rm ext}$.
We shall show in \ref{Ft12} that there exists a constant $c_{0}>0$ independent of $t\in Y_{0}$ such that $\lam_{1}(t)> c_{0}$ for all $t\in Y_{0}$ (including the case $\lam_{1}(t)=+\infty$).
This implies the existence of a constant $c_{\xi}>0$ such that
$|\xi |_{g^{\star}}^{2}< c_{\xi}$ on $Y_{0}$, and our claim will be proved.
\smallskip

For a constant vector $s = (s_1, \ldots, s_r) \in \BC^r$, we let $u_s = \sum_{i=1}^r s_ie_i \in H^0(Y, E)=H^{0}(X,K_{X/Y}+L) $.
We denote by $S^{2r-1} = \{ s \in \BC^r ; \ |s| = (\sum |s_i|^2)^{1/2} = 1 \}$ the unit sphere.
Since $e_1, \ldots, e_r$ generate $E$ over $Y$, $u_s$ is nowhere vanishing on $Y$ as soon as $s \ne 0$, namely $u_s$ is non-zero in $E/(\mathfrak m_{Y, y} E)$ at any $y \in Y$.
The map $\BC^r \to H^0(X, K_{X}+L)$ given by 
$$
	s \mapsto u_s \mapsto u_{s}(dt) = \sum_{i=1}^r s_i e_{i}(dt), \ \ 
	\BC^{r} \to H^{0}(X,K_{X/Y}+L) \cong H^{0}(X,K_{X}+L)
$$  
is continuous, with respect to the standard topology of $\BC^r$ and the topology of $H^0(X, K_{X}+L)$ of uniform convergence on compact sets
(recall \ref{L2metric} for the meaning of $u_{s}(dt)$).
Then the following \ref{Ft12} shows our claim.
\end{proof}

\begin{lem} \label{Ft12}
{\rm (cf.\ {\rm \cite[1.12]{Ft}}.)} \ 
Let $y \in \Dl$. 
Then there exist a neighborhood $W_{y}$ of $y$ in $Y$ and a constant $c_{0}>0$, such that $g(u_s, u_s)(t) \ge c_{0}$ for any $s \in S^{2r-1}$ and any $t \in W_{y} \sm \Dl$.
\end{lem}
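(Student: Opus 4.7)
The plan is to exploit the semi-stable structure of $f$ to localize the problem near a smooth point of the central fiber, and then conclude by a compactness argument on $S^{2r-1}$. First, by assumption \ref{local}(2.ii), the restriction $u_s|_{X_y}$ is a non-zero element of $H^0(X_y, (K_{X/Y}+L) \ot \CO_{X_y})$ for every $s \in S^{2r-1}$. Since $f$ is semi-stable, the exponents $b_j$ in \ref{rnc} satisfy $b_j \le 1$, so $f^{\star}\Dl$ is a reduced simple normal crossing divisor, and the smooth locus of $X_y$ coincides with the locus where $f$ is a submersion. Hence for each $s_0 \in S^{2r-1}$ there exists a smooth point $x_{s_0} \in X_y$, lying on some irreducible component of $f^\star \Dl$, at which $u_{s_0}$ does not vanish.

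Around $x_{s_0}$ I take a coordinate neighborhood $(U;\, z_1, \ldots, z_{n+m})$ on $X$ in which $f|_U$ becomes the projection $t_i = z_{n+i}$ for $i=1,\ldots,m$ (after relabeling), together with local trivializations of $K_X$ and $L$. Each frame element $e_i \in H^0(Y, E) = H^0(X, K_{X/Y}+L)$ is then represented by $e_i(dt) = \phi_{e_i}(z)\, dz_1 \wed \ldots \wed dz_{n+m}$ with $\phi_{e_i}$ holomorphic on $U$, so
\[ \sg_{u_s}|_U \;=\; \phi_{u_s}(z)\, dz_1 \wed \ldots \wed dz_n, \qquad \phi_{u_s} := \sumn_{i=1}^r s_i\, \phi_{e_i}, \]
and $\phi_{u_s}$ depends linearly (and hence continuously) on $s$. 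Since $\phi_{u_{s_0}}(x_{s_0}) \ne 0$, there exist a polydisc neighborhood $V \subset U$ of $x_{s_0}$ and a spherical neighborhood $N_{s_0}$ of $s_0$ in $S^{2r-1}$ such that $|\phi_{u_s}(z)|^2 \ge c_1 > 0$ on $V$ for every $s \in N_{s_0}$.

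Next, write $h = e^{-\vph}$ on $U$ with $\vph$ plurisubharmonic. Being psh, $\vph$ is upper semi-continuous and therefore bounded above on $\overline V$, so $h \ge c_2 > 0$ pointwise on $V$. The submersion property of $f|_U$ ensures that for $t$ close to $y$, the slice $X_t \cap V$ is non-empty and has uniformly bounded Euclidean volume from below. Combining these three estimates, for $s \in N_{s_0}$ and every $t$ in a sufficiently small neighborhood $W_{s_0}$ of $y$ intersected with $Y_0$,
\[ g(u_s, u_s)(t) \;=\; \int_{X_t} c_n\, \sg_{u_s}|_{X_t} \wed \ol{\sg_{u_s}|_{X_t}}\, h_t \;\ge\; \int_{X_t \cap V} c_n\, \sg_{u_s} \wed \ol{\sg_{u_s}}\, h \;\ge\; c_{s_0} > 0. \]
Finally, by compactness of $S^{2r-1}$, a finite subcollection of the $\{N_{s_0}\}$ covers the sphere; taking $c_0$ to be the minimum of the associated constants $c_{s_0}$ and $W_y$ the intersection of the corresponding base neighborhoods then yields the claim.

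The main obstacle is the initial step of producing a smooth point $x_{s_0}\in X_y$ (automatically a point where $f$ is a submersion) at which $u_{s_0}$ is non-zero; this is precisely where the combination of the injectivity in \ref{local}(2.ii), which prevents $u_{s_0}|_{X_y}$ from vanishing identically, and the semi-stability hypothesis, which forces the smooth locus of $X_y$ to agree with the locus of submersion points of $f$, becomes essential. Once such a point is fixed, the remaining estimates are local and use only the fact that a psh weight of $h$ is bounded above on compact subsets of the ambient coordinate chart.
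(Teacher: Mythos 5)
Your proof is correct and follows essentially the same strategy as the paper's: reduce to the pointwise version (\ref{Ft11}) by compactness of $S^{2r-1}$, produce a point of $X_y$ at which $f$ is a submersion and the section does not vanish, write everything out in product coordinates, bound the quasi-psh weight of $h$ from above on a compact polydisc, and estimate the $L^2$ integral from below uniformly over nearby smooth fibers.

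Where you diverge is in how the nonvanishing point is produced. The paper runs an induction on $m=\dim Y$ (slicing by $\{t_1=0\}$ and treating $m=1$ by a divisibility argument, invoking \ref{local}(2.ii) in the inductive step) to show that $u(dt)$ is not identically zero along some component $B_j\cap f^{-1}(y)$, and then picks a general point there. You argue more directly: \ref{local}(2.ii) shows $u_{s_0}|_{X_y}$ is a nonzero element of $H^0(X_y,(K_{X/Y}+L)\ot\CO_{X_y})$; since $X_y$ is reduced the nonvanishing locus is open and nonempty, so it meets the dense smooth locus; and by the flatness of $f$ the smooth locus of $X_y$ is exactly the set of submersion points of $f$. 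This is a genuine streamlining that replaces the induction by a one-line deduction, and it is sound. One small imprecision in the way you phrase it: the coincidence of the smooth locus of $X_y$ with the submersion locus of $f$ is a consequence of flatness (\ref{local}(1.i)), not of semi-stability per se. What semi-stability actually contributes to your argument is the reducedness of the scheme-theoretic fiber $X_y$ (all multiplicities $b_j\le 1$), without which \emph{nonzero in $H^0(X_y,\cdot)$} would not imply \emph{nonvanishing at a point}, and the smooth locus of $X_y$ could even be empty. Apart from this attribution, the remaining steps agree with the paper's.
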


Since $S^{2r-1}$ is compact, \ref{Ft12} is a consequence of the next statement.

\begin{lem}  \label{Ft11}
{\rm (cf.\ \cite[1.11]{Ft}.)} \ 
In notations above, let $y \in \Dl$ and let $s_0 \in S^{2r-1}$. 
Then there exist a neighborhood $S(s_0)$ of $s_0$ in $S^{2r-1}$, a neighborhood $W_{y}$ of $y$ in $Y$ and a constant $c_{0}>0$ such that $g(u_s, u_s)(t) \ge c_{0}$ for any $s \in S(s_0)$ and any $t \in W_{y} \sm \Dl$.
\end{lem}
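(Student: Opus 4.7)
The plan is to locate a point $x_0$ in the fiber $X_y$ where $u_{s_0}$ does not vanish and around which the local geometry of $f$ is as simple as possible, then use local coordinates to reduce the lower bound on the fiber integral to a standard continuity argument.

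First I would use the injectivity assumption \ref{local}(2.ii): the image of $u_{s_0}$ in $E/\fm_{Y,y}E$ is nonzero (since $\{e_i\}$ is a global frame and $s_0\neq 0$), hence its image in $H^{0}(X_{y},(K_{X/Y}+L)\ot \CO_{X_{y}})$ is nonzero. Combined with semi-stability, which guarantees that the reduced support $|X_y|$ has pure dimension $n$ with normal crossing singularities, I can choose $x_0$ at a smooth point of a single component of $|f^{\star}\Delta|$ such that $u_{s_0}|_{X_y}$ does not vanish at $x_0$. By \ref{rnc}, after reindexing the $z_j$'s, there are local coordinates $(U;z_1,\ldots,z_{n+m})$ at $x_0$ with $f^{\star}t_j=z_{n+j}$ for $j<m$ and $f^{\star}t_m=z_1$ (only one exponent is 1 in the semi-stable normal form at a smooth point of a single boundary component). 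Consequently $f^{\star}dt=dz_1\wed dz_{n+1}\wed\cdots\wed dz_{n+m-1}$ is nowhere zero on $U$, and for a local frame $e_L$ of $L$,
\begin{equation*}
u_s=G_s(z)\, dz_2\wed\cdots\wed dz_n\wed dz_{n+m}\ot e_L
\end{equation*}
for a function $G_s$ holomorphic in $z$ and linear (hence continuous) in $s$, with $G_{s_0}(x_0)\neq 0$.

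Next I would derive the lower bound on a small polydisk $V\subset U$ centered at $x_0$. For $t$ in a suitable neighborhood $W_y$ of $y$, the portion $V\cap X_t=\{z_1=t_m,\, z_{n+j}=t_j\text{ for } j<m\}\cap V$ is parametrized by $(z_2,\ldots,z_n,z_{n+m})$ ranging over a fixed polydisk, so its Euclidean volume is bounded below by a positive constant $v_0$ uniform in $t$. Shrinking $V$, the continuity of $G_{s_0}$ gives $|G_{s_0}|\geq c$ on $V$, and then the continuity of $s\mapsto G_s$ furnishes a neighborhood $S(s_0)\subset S^{2r-1}$ of $s_0$ with $|G_s|\geq c/2$ on $V$ for all $s\in S(s_0)$. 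Finally, since the local weight of $h$ in the frame $e_L$ is psh, hence upper semi-continuous and locally bounded above, $|e_L|^2_h=e^{-\vph}$ admits a positive lower bound $e^{-M}$ on $V$. Combining these,
\begin{equation*}
g(u_s,u_s)(t)\ \geq\ \int_{V\cap X_t}|G_s|^2\, |e_L|^2_h\, dV_{X_t}\ \geq\ \frac{c^2}{4}\cdot e^{-M}\cdot v_0\ =:\ c_0\ >\ 0
\end{equation*}
uniformly for $s\in S(s_0)$ and $t\in W_y\sm \Del$.

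The main technical obstacle is the selection of the base point $x_0$: we must simultaneously ensure the non-vanishing of $u_{s_0}$ at $x_0$ and that the local structure of $f$ at $x_0$ is as simple as possible. This is achieved by combining the scheme-theoretic non-vanishing from \ref{local}(2.ii) with the Zariski density, inside $|X_y|$, of its smooth locus (a consequence of semi-stability). Once $x_0$ is chosen, the remaining estimates are entirely local; the only subtlety is the singular character of $h$, which is harmless because its local weight is psh and therefore admits a local upper bound, yielding a positive lower bound on $|e_L|^2_h$.
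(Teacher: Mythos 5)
Your proposal is correct and follows the same overall strategy as the paper: pick a point $x_0\in X_y$ lying on exactly one component of $\Supp f^{\star}\Delta$, where $f$ is a local submersion and $u_{s_0}$ does not vanish, and then bound the fiber integral from below on a small polydisk around $x_0$, exploiting the continuity of $s\mapsto u_s$ and a local upper bound on the psh weight of $h$.

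The one place you differ is in how the point $x_0$ is produced. The paper proves, by an induction on $\dim Y$ (with a direct computation in the base case $\dim Y=1$), that $u_{s_0}(dt)$ does not vanish identically along $B\cap f^{-1}(y)$ for some component $B$ of $f^{\star}\Delta$, and then takes a general point of $B\cap f^{-1}(y)$. You instead appeal directly to the injectivity in \ref{local}(2.ii), which gives $u_{s_0}|_{X_y}\neq 0$ in $H^0(X_y,(K_{X/Y}+L)\ot\CO_{X_y})$, and use that the semi-stable fiber $X_y$ is a reduced normal crossing divisor with dense smooth locus to locate $x_0$. The two routes are in fact equivalent: at a smooth point of the reduced fiber lying on a single boundary component, $f$ is a submersion so $f^{\star}dt$ is nowhere zero there, hence non-vanishing of $u_{s_0}|_{X_y}$ coincides with non-vanishing of $u_{s_0}(dt)$. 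Your version is a bit more streamlined, trading the induction for a sharper use of a normalization that is already built into the setup; both rest on the semi-stable reduction and on \ref{local}(2.ii). The remaining local estimates (linearity/continuity of $G_s$ in $s$, uniform fiber-slice volume, and a positive lower bound on $e^{-\vph_L}$ from the local upper bound of a psh weight) match the paper's steps (3) and (4) essentially verbatim.
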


\begin{proof} 
(1)
Let $u \in H^0(Y, E)$, and assume $u$ does not vanish at $y$. 
Then we claim that there exists a  component $B_j$ in $f^*\Dl = \sum B_j$ such that $u(dt) \in H^0(X, K_{X}+L)$ does not vanish identically along $B_j \cap f^{-1}(y)$.

The isomorphism $H^{0}(X,K_{X/Y}+L) \cong H^{0}(X,K_{X}+L)$ is as $H^0(Y, \CO_{Y})$-modules.
In particular, this will show that $u(dt) \in H^0(X, K_{X}+L)$ does not vanish at $y \in Y$ as an element of an $H^0(Y, \CO_{Y})$-module.
We have to show that if $u(dt)$ vanishes identically along $f^{-1}(y)$, then $u$ vanishes at $y$, i.e., $u \in \mathfrak m_{Y,y}E$.
If $m = 1$, we can check it by a direct computation as in the next paragraph. 
If $m>1$, we cut $Y$ and $X$ by $\{t_{1} = 0\}$ (note $X \cap \{t_{1} = 0\}$ is still smooth)
and use the induction hypothesis.
More precisely, we let $Y_{1}=\{t_{1} = 0\} \subset Y, X_{1}=f^{-1}(Y_{1})$, and let $f_{1}:X_{1}\to Y_{1}$ be the induced morphism.
We have a restriction map
$r: H^0(X, K_{X}+L) \to H^0(X_{1}, (K_{X}+L)|_{X_{1}}) \cong H^0(X_{1}, K_{X_{1}}+L|_{X_{1}})$ by the adjunction $K_{X}|_{X_{1}}= K_{X_{1}}$.
The kernel is generated by $t_{1}$.
If $u(dt)$ vanishes identically along $f^{-1}(y)$, then by induction hypothesis, the image of $u(dt)$ in $H^0(X_{y}, (K_{X/Y}+L)|_{X_{y}})$ is zero.
Thus by the property \ref{local}(2), we have $u(dt) \in \fm_{Y,y}E$.

Let us suppose $m=1$, and let $t=t_{1}$ be the coordinate function on $Y$.
Suppose that $u(dt)$ vanishes identically along $f^{-1}(y)$, which is now a reduced divisor. 
We shall show that $u \in H^{0}(X,K_{X/Y}+L)$ is divided by $f^{-1}t$ as a holomorphic section, and obtain a contradiction. 
To see $u$ is divided by $f^{-1}t$, it is enough to show that $u$ vanishes identically along $f^{-1}(y)$ as a bundle valued continuous function, and it is enough to show that it vanishes identically along the smooth part of $f^{-1}(y)$ by the continuity.
So we take a smooth point $x$ of $f^{-1}(y)$ and a local coordinate $(U;z)$ around $x$ such that the map $f$ is given by
$$
	(z_1,\dots, z_{n+1})\mapsto t= z_{n+1},
$$
and $y=0$ in this coordinates $t$. 
With respect to the coordinates $(U; z)$, (not all, but) a representative for the element $u$ can be written as a $(n,0)$-form on $U$ as follows
$$
	u= \tau dz_1\wed\dots\wed dz_n, 
$$
where $\tau \in H^{0}(U, \CO_{X})$.
By this we mean that we have the equality
$$
	u(dt)= \tau dz_1\wed\dots\wed dz_n \wed f^\star(dt)
	=  \tau dz_1\wed\dots\wed dz_n \wed dz_{n+1}.
$$
as top degree forms defined on $U$.
The vanishing of $u(dt)$ along $f^{-1}(y) \cap U$ is equivalent to the vanishing of $\tau$, as well as of $u$, along $f^{-1}(y) \cap U$.


\smallskip

(2)
For our nowhere vanishing $u_{s_0}$, we take a component 
$$
	B = B_j
$$ 
in $f^*\Dl$ such that $u_{s_{0}}(dt)$ does not vanish identically along $B \cap f^{-1}(y)$.
We take a general point $x_0 \in B \cap f^{-1}(y)$, and a local coordinate $(V; z = (z_1, \ldots, z_{n+m}))$ centered at $x_0 \in X$ such that $f$ is given by $t=(t_{1},\ldots,t_{m})= f(z) = (z_{n+1}, \ldots, z_{n+m})$ on $V$.
In particular $(f^*\Dl)|_V = B|_V = \{z_{n+m} = 0\}$.
Over $V$, we may assume that $L$ and $h$ are also trivialized, i.e., $L|_V \cong V \times \BC$ and $h=e^{-\vph}$ with a (quasi-)psh function $\vph \in L^{1}_{loc}(V,\BR)$. 
In particular $e^{-\vph}> a$ on $V$ for a constant $a>0$ (possibly after shrinking $V$).

(3)
For every $s \in S^{2r-1}$, we have $u_s(dt) \in H^0(X, K_{X}+L)$.
Over $V$, it can be written as
$$
	u_{s}(dt)|_{V}=\tau_{sV} dz_{1} \wed \ldots \wed dz_{n+m}
	\text{ with } \tau_{sV}\in H^{0}(V,\CO_{X}),
$$
and hence $\tau_{sV} dz_{1} \wed \ldots \wed dz_{n+m} =
\big( \sg_{sV} dz_{1} \wed \ldots \wed dz_{n}\big) \wed f^{\star}dt$.
As in \ref{L2metric}, every $u_{s}\in H^{0}(Y,E)=H^{0}(Y, f_{\star}(K_{X/Y}+L))$ defines $\sg_{st} \in H^{0}(X_{y},K_{X_{y}} + L_{y})$ for every $t\in Y_{0}$.
Remark \ref{L2metric} also shows 
$u_{s}|_{V\cap X_{t}} = \big(\sg_{sV} dz_{1} \wed \ldots \wed dz_{n} \big)|_{V\cap X_{t}} $ on $V\cap X_{t}$.

At the point $s_0 \in S^{2r-1}$, since $u_{s_{0}}(dt)$ does not vanish identically along $B \cap f^{-1}(y)$ and since $x_0 \in B \cap f^{-1}(y)$ is general, we have $\sg_{s_{0}V}(x_0) \ne 0$.

(4)
By the continuity of $s \mapsto u_s \mapsto u_{s}(dt)$, we can take an $\ep$-polydisk 
$V(\ep) = \{z = (z_1, \ldots, z_{n+m}) \in V ; \ |z_i| < \ep \text{ for any } 1 \le i \le n+m \}$ centered at $x_0$ for some $\ep > 0$ and a neighborhood $S(s_0)$ of $s_0$ in $S^{2r-1}$, such that
$$
	A := \inf\{ |\sg_{sV}(z)| ; \	 s \in S(s_0), \ z \in V(\ep) \} > 0.
$$
We set $W_{y} := f(V(\ep))$, which is an open neighborhood of
$y \in Y$, since $f$ is flat (in particular it is open).
Then for any $s \in S(s_0)$ and any $t \in W_{y} \sm \Dl$, 
we have
\begin{equation*} 
\begin{aligned}
\int_{X_{t}} (-1)^{n^{2}/2} \sg_{s} \wed \ol \sg_{s} h|_{X_{t}} & 
\ge a \int_{X_{t} \cap V} (-1)^{n^{2}/2}  \sg_{s} \wed \ol \sg_{s} |_{X_{t} \cap V} \\
{} & 
\ge a \int_{z \in X_{t} \cap V(\ep)} A^2 \ dV_n 
\ = \
a A^2 (2\pi \ep^2)^n,
\end{aligned}
\end{equation*} 
where $dV_n = \bigwedge_{i=1}^n \ai dz_i \wed d\ol{z_i}$ is an euclidean volume form on $\BC^n$.
\end{proof}

\begin{rem}
(1)
It may be possible to show $f_{\star}((K_{X/Y}+L)\ot \CI(h))$ is Griffiths semi-positive  in \ref{ext} without assuming that 
$f_{\star}((K_{X/Y}+L)\ot \CI(h)) \subset f_{\star}(K_{X/Y}+L)$ 
is generically isomorphic.
However there are some technical difficulties.
To explain this, suppose $Y$ is a disk, and $X_{0}=\sum m_{i}X_{i}$ is the singular fiber over $0\in Y$. 
There are difficulties to compare the ideal $\CI_{X_{0}}, \CI(h)$, and $\text{div} (u)$ for $u \in H^{0}(X,(K_{X/Y}+L)\ot \CI(h))$.
Embedded components of $\CO_{X}/\CI(h)$ are difficult to handle in general.
More subtle thing is, because $h$ may have non-algebraic/analytic singularities, we cannot reduce to the normal crossing situation even after a modification of the manifold.
\smallskip

\noindent (2)
In the proof of \ref{ext}, it is not enough to show that, on each line $\ell$ on $Y$ which is not contained in $\Delta$, $g_{1NS,X/Y}|_{\ell\sm \Del}$ is bounded as $g_{1NS,X/Y}|_{\ell\sm \Del} \ge c_{\ell}$ by a constant $c_{\ell}>0$.
We need a uniform bound $c_{\ell}\ge c>0$ independent of $\ell$.
We can not assume $\dim Y=1$ without special attentions.
\end{rem}

\newpage

\section{Positivity of relative pluricanonical bundles}\label{Srcb}

In this part of our article we will provide the necessary tools needed in order to generalize Theorem \ref{ext} in the context of pluricanonical bundles. 
Part of our motivation is that the metric associated to the bundle $f_\star(mK_{X/Y}+ L)$ for $m\geq 2$ has better regularity properties than the one on
$f_\star(K_{X/Y}+ L)$, and this is important for applications. 
The path we will follow is to write $mK_{X/Y}+ L= K_{X/Y}+ L_{m-1}$. 
We endow the bundle $L_{m-1}$ with the metric
$h_{m-1}:= (B_{m, X/Y}^{-1})^{(m-1)/m}h^{1/m}$, and then we apply the results of the preceding section with the data $f$ and $(L_{m-1}, h_{m-1})$.  
Therefore, it is fundamental to understand the relative Bergman type metric $B_{m, X/Y}$ (as the metric $h$ on $L$ is given).

To start with, we will consider the case $m= 1$. In the first part of this section we will revisit the construction of the Bergman-type metric on $K_{X/Y}+ L$.
The proof we present here is different from the original argument in \cite{BPDuke}. It relies on the version of the Ohsawa-Takegoshi extension theorem obtained by B\l ocki \cite{blocki} and Guan-Zhou \cite{GZ}, combined with an approach due to Tsuji. 
The construction of the metric $B_{m, X/Y}^{-1}$ on $mK_{X/Y}+ L$ for $m\geq 2$ is treated afterwards; to a large extent, it is a consequence of the case $m=1$. In both cases, the metric is constructed first on a Zariski open set which will be explicitly described here.  
We show that the curvature current corresponding to $B_{m, X/Y}^{-1}$ is semi-positive definite. The extension of this current to the manifold $X$ will be a consequence of the version of the Ohsawa-Takegoshi extension theorem we establish here.

The set-up in this section is as follows: let $f:X\to Y$ be a projective surjective morphism of complex manifolds with connected fibers, and let $L\to X$ be a line bundle, endowed with a singular Hermitian metric $h_L=e^{-\varphi_{L}}$ whose curvature current is positive. 


\subsection{The log-plurisubharmonicity of the Bergman kernel metric revisited}

``Ideally", the relative Bergman kernel metric $B_{1,X/Y}$ on $-(K_{X/Y}+L)$ would be defined as a collection of fiberwise Bergman kernel metrics $B_{1,y}$ with respect to $h_y:= h_L|_{X_y}$ for $X_{y}$ smooth (notations as in \ref{relative}).
However, it is not a-priori clear that the resulting metric $B_{1,X/Y}$ is even semi-continuous, mainly because 
the dimension of the space of sections of the bundle $K_{X_y}+L_y$ 
may not be constant as a function of $y$.

In order to overcome this phenomena, we first define explicitly $B_{1,X/Y}$ on the set
$$
Y_{\rm ext} = Y_{1,\rm ext} := \{y \in Y_{0};\ 
		h^0(X_y,K_{X_y}+L_y) \text{ equals to the rank of } f_{\star}(K_{X/Y}+L) \},
$$
which is the Zariski open set of regular values $y\in Y$ such that every section of
$\displaystyle K_{X_y}+ L_{y}$ extends to the $f$-inverse image of an open set containing $y$ (as we defined in \ref{set on Y}).
Let $$
	X_{\rm ext} = f^{-1}(Y_{\rm ext})
$$ 
so that in particular the map 
$f: X_{\rm ext} \to Y_{\rm ext}$ is a submersion. 
In \cite[pp.\,346--367]{BPDuke} (or \ref{relative} here) we have defined the metric $B_{1, X/Y}^{-1}$ on $\displaystyle (K_{X/Y}+ L)|_{f^{-1}(Y_{\rm ext}\cap Y_{h})}$. 
We assign the value $+\infty$ on the set $Y_{\rm ext}\setminus Y_h$, and in this way $B_{1, X/Y}^{-1}$ is well-defined on $X_{\rm ext}$.
\medskip

\noindent The following result was basically established in \cite{BPDuke}; we will nevertheless provide here an alternative argument which is based on 
B\l ocki's and Guan-Zhou's version of the Ohsawa-Takegoshi extension theorem with optimal constant.

\begin{thm}\label{m=1} \cite[0.1]{BPDuke}
We assume that there exists a point $y\in Y_h$ such that 
$$
	H^0\big(X_y, (K_{X_y}+ L_y)\ot \CI(h_{y})\big) \neq 0.
$$
Then the curvature current corresponding to the metric $B_{1, X/Y}^{-1}|{X_{\rm ext}}$ defined above is positive. 
Moreover, the local weights of this metric are upper semi-continuous, and uniformly bounded on $X\setminus X_{\rm ext}$. Hence, $B_{1, X/Y}^{-1}|_{X_{\rm ext}}$ 
extends to $X$ in a unique manner, and it endows the bundle $K_{X/Y}+ L$ with a metric whose curvature current is positive definite. 
\end{thm}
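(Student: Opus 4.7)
\smallskip

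The conclusion splits into three assertions to verify: (i) on $X_{\rm ext}$, the local weight $\psi$ of $B_{1,X/Y}^{-1}$ is plurisubharmonic; (ii) $\psi$ is locally bounded above on all of $X$ and not identically $-\infty$; and (iii) $\psi$ extends uniquely to a psh function on $X$, yielding the asserted metric with positive curvature current. Uniqueness of the extension is automatic once (ii) holds, since $X \sm X_{\rm ext}$ is a proper analytic subset of $X$ (the set $Y_{\rm ext}$ is Zariski open by upper semi-continuity of $h^{0}$).

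For step (i), I would follow a Tsuji-type strategy organised around the optimal-constant Ohsawa--Takegoshi theorem of B\l ocki and Guan--Zhou. Fix $x_0 \in X_{\rm ext}$ with $y_0 = f(x_0)$, and work in local trivialisations of $K_{X/Y}+L$ near $x_0$ and of $L$ near a compact neighborhood of $X_{y_0}$; in these trivialisations $\psi = \log B_{1,X/Y}$ pointwise. Given $\ep > 0$, choose $u_0 \in H^{0}(X_{y_0}, K_{X_{y_0}}+L_{y_0})$ with $\|u_0\|_{1} \le 1$ and $|u_0(x_0)|^{2} h(x_0) \ge (1-\ep)e^{\psi(x_0)}$, which is possible since $y_0 \in Y_{\rm ext}$ forces the stalk $f_{\star}(K_{X/Y}+L)_{y_0}$ to surject onto $H^{0}(X_{y_0}, K_{X_{y_0}}+L_{y_0})$ (by cohomology and base change, using $Y_{\rm ext}\subset Y_{1,\mathrm{free}}$). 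Apply optimal OT on a small polydisc $\Delta_{r}\subset Y$ centred at $y_0$, producing $U \in H^{0}(f^{-1}(\Delta_{r}), K_{X/Y}+L)$ with $U|_{X_{y_0}} = u_0$ and the sharp bound
\begin{equation*}
\int_{\Delta_{r}}\| U|_{X_{y}}\|_{1}^{2}\, dA(y) \;\le\; \pi r^{2}\, \|u_0\|_{1}^{2}
\end{equation*}
(written here for $\dim Y = 1$; in general iterate along coordinate directions). By the defining supremum, $e^{\psi(x)} \ge |U(x)|^{2} h(x)/\|U|_{X_{f(x)}}\|_{1}^{2}$ on $X_{\rm ext}\cap f^{-1}(\Delta_{r})$, and $\log(|U|^{2}h)$ is psh on $f^{-1}(\Delta_{r})$. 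Combining the near-extremal inequality at $x_0$, the sub-mean property of $\log(|U|^{2}h)$ over a product polydisc $D \subset f^{-1}(\Delta_{r})$ around $x_0$, and Jensen's inequality applied to the base average of $\log\|U|_{X_{y}}\|_{1}^{2}$ (bounded by $\log 1 = 0$ thanks to the sharp constant), one obtains $\psi(x_0) \le \log(1/(1-\ep)) + \frac{1}{|D|}\int_{D}\psi\, d\mathrm{vol}$; letting $\ep\to 0$ gives the sub-mean inequality and hence plurisubharmonicity.

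Step (ii) follows from a standard $L^{2}\to L^{\infty}$ bound: for $u \in H^{0}(X_{y}, K_{X_{y}}+L_{y})$ with $\|u\|_{1} \le 1$, since $h = e^{-\vph_{L}}$ with $\vph_{L}$ locally bounded above as a psh function, the mean-value estimate yields $|u(x)|^{2} h(x) \le C$ on any relatively compact subset of $X$, uniformly in $y$; taking the sup gives $\psi \le \log C$ locally on all of $X$. The hypothesis that $H^{0}(X_{y_0}, (K_{X_{y_0}}+L_{y_0})\ot \CI(h_{y_0})) \neq 0$ for some $y_0 \in Y_{h}$, combined with ordinary Ohsawa--Takegoshi extension of such a section, produces a non-zero element of $H^{0}(f^{-1}(V), K_{X/Y}+L)$ on a neighborhood $V$ of $y_0$ with finite $L^{2}(h)$-norm, witnessing $\psi \not\equiv -\infty$. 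For (iii), since $X \sm X_{\rm ext} = f^{-1}(Y \sm Y_{\rm ext})$ is a proper analytic subset of $X$, the classical extension theorem for psh functions locally bounded above across proper analytic subsets supplies the unique psh extension $\wtil\psi$ of $\psi$ to all of $X$, which is the local weight of the desired positively curved metric on $K_{X/Y}+L$.

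The main obstacle is the sharpness of the constant in step (i): without the B\l ocki / Guan--Zhou optimal OT constant, one obtains $\pi r^{2} C_{0}$ with $C_{0} > 1$ in the above $L^{2}$-bound, which introduces an additive $\log C_{0} > 0$ term in the sub-mean inequality and destroys plurisubharmonicity. A secondary technicality is that OT is most naturally stated for codimension-one submanifolds, so when $\dim Y \ge 2$ one must either reduce to the sub-mean property along $1$-dimensional analytic discs in $Y$ (enough for psh by Hartogs) or iterate codimension-one extensions across coordinate hyperplanes of $\Delta_{r}$, keeping track that no multiplicative loss is accumulated.
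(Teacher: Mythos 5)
Your plan follows the same Tsuji/Guan--Zhou strategy as the paper for establishing the sub-mean value inequality via the optimal-constant Ohsawa--Takegoshi theorem, and that part is essentially sound. However, there is a genuine gap: you never establish the upper semi-continuity of the local weight $\psi = \varphi^{(1)}_{X/Y}$ on $X_{\rm ext}$, and the sub-mean inequality alone does \emph{not} imply plurisubharmonicity. Upper semi-continuity is not automatic here: $B_{1,X/Y}$ is defined as a supremum of fiberwise expressions whose normalizing norms $\|\cdot\|_{1,y}$ involve the singular metric $h_L$ restricted to the fiber $X_y$, and such restrictions behave only lower semi-continuously in $y$ in general. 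One could pass to the usc regularization $\psi^{\star}$, but then one would lose the pointwise fiberwise description (i.e.\ the base-change property that $B_{1,X/Y}|_{X_y}$ equals the fiberwise Bergman kernel on $Y_{\rm ext}$), which is precisely what the theorem asserts. The paper handles this by exhibiting $\varphi^{(1)}_{X/Y}$ as a decreasing limit of continuous weights $\varphi_{\alpha,\ep}$ built from a monotone smooth approximation $h_{L,\ep}\uparrow h_L$, together with a careful two-case analysis showing that $\lim_{\ep\to 0}\varphi_{\alpha,\ep}=\varphi^{(1)}_{\alpha,X/Y}$ pointwise (including at points where the limit is $-\infty$). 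This is a substantive part of the argument and cannot be skipped.

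A secondary imprecision in your step (ii): to get a bound on $|u(x)|^{2}h(x)$ that is \emph{uniform in $y$} from the fiberwise normalization $\|u\|_{1}\le 1$, one must first extend $u$ across nearby fibers via Ohsawa--Takegoshi and then apply the mean-value estimate on a polydisc in the total space $X$, not on the fiber $X_y$ (a fiberwise mean-value estimate would produce a constant depending on the geometry of $X_y$). The paper delegates this to the earlier argument in \cite{BPDuke}, and your write-up would need to make this extension step explicit to be correct.
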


\begin{proof} 
A first observation is that it suffices to show that the weights $\varphi_{X/Y}^{(1)}$
of $B_{1, X/Y}^{-1}|_{X_{\rm ext}}$ are psh. 
Indeed, by the original argument in \cite{BPDuke}, the resulting metric extends to $X$, and it has the positivity property stated in the last part of Theorem \ref{m=1}. We remark that the extension property amounts to prove that $\displaystyle \sup_{X_{\rm ext}}\varphi_{X/Y}^{(1)}< \infty$. 
We refer to \cite{BPDuke} for a complete argument, based on the classical version of the Ohsawa-Takegoshi extension theorem.
\smallskip

(i)
We first show that \emph{the weights $\varphi_{X/Y}^{(1)}$ of metric $B_{1, X/Y}^{-1}|_{X_{\rm ext}}$ are upper semi-continuous}. 
This claim will be a consequence of the fact that $B_{1, X/Y}^{-1}|_{X_{\rm ext}}$ can be written as increasing limit of continuous metrics on the bundle $(K_{X/Y}+ L)|_{X_{\rm ext}}$. 
Let $(h_{L, \ep}=e^{-\varphi_{L,\ep}})_{\ep > 0}$ be an increasing sequence of smooth metrics on $L$, converging pointwise and in $L^1$ to the metric $h_L$. We recall that this can be constructed by a global convolution kernel, cf. \cite{Dnote}. 
Actually, the negative part of the curvature form associated to $(L, h_{L, \ep})$ is uniformly bounded and it converges almost everywhere to zero, but we will not use this important information here.

Let $B_{1,\ep}^{-1}|_{X_{\rm ext}}$ be the metric on $(K_{X/Y}+ L)|_{X_{\rm ext}}$ whose local weights $\varphi_{\alpha, \ep}$ are defined as follows (\cite[pp.\,346--347]{BPDuke}). 
Given a point $x_0\in \Omega_\alpha\subset X_{\rm ext}$ in a coordinate set $\Omega_\alpha$ we consider the expression
$$
	B_{1,\ep}(x_{0}) = \exp\big(\varphi_{\alpha, \ep}(x_0)\big)
	:= \sup_u \frac{\vert u^\prime_\alpha(x_0)|^2}{\Vert u\Vert^2_{y_0, \ep }},
$$
where the notations are as follows: $y_0:= f(x_0)$ is the projection of $x_0$, $u$ is a global section of the bundle $\displaystyle K_{X_{y_0}}+ L_{y_{0}}$, and $\Vert \cdot \Vert_{\, \cdot, \ep}$ means that we are using the metric $h_{L, \ep}|_{X_{y_0}}$ in defining the $L^2$ norm of $u$. 
We fix $z=(z_{1},\ldots, z_{k})$ and $t=(t_{1},\ldots,t_{\ell})$ coordinates on $\Omega_\alpha$ and $f(\Omega_\alpha)$ respectively.
The local holomorphic function $u^\prime_\alpha$ is defined as 
$u^\prime_\alpha dz = u \wedge dt$, where $dz=dz_{1}\wed\ldots\wed dz_{k}$ and $dt=dt_{1}\wed \ldots\wed dt_{\ell}$. 
In other words, $B_{1,\ep}^{-1}|_{X_{\rm ext}}$ is defined exactly as $B_{1, X/Y}^{-1}|_{X_{\rm ext}}$, except that we are using the metric $h_{L, \ep}$ for the normalization of the sections on the fibers of $f$. 

The fact that $h_{L, \ep}$ is non-singular for each $\ep> 0$, together with the choice of $y_{0} \in Y_{\rm ext}$ shows that the weights $\varphi_{\alpha, \ep}$
defined on $\Omega_\alpha$ are continuous. Given the monotonicity property of the family of metrics $h_{L, \ep}$ with respect to 
$\ep$, we infer that we have
\begin{equation} \label{monot}
\varphi_{\alpha, \ep} \geq \varphi_{\alpha, \ep^\prime}\geq \varphi_{\alpha, X/Y}^{(1)} 
\end{equation}
provided that $\ep\geq \ep^\prime$; we denote by $\varphi_{\alpha, X/Y}^{(1)}$ the local weight of the metric
$B_{1, X/Y}^{-1}|_{X_{\rm ext}}$ on $\Omega_\alpha$ with respect to the chosen trivializations. 

We show next that we have
\begin{equation}\label{limit}
\lim_{\ep\to 0}\varphi_{\alpha, \ep}= \varphi_{\alpha, X/Y}^{(1)}.
\end{equation}
To this end, we will analyze the following two cases.

\noindent $\bullet$ 
We have $\varphi_{\alpha, X/Y}^{(1)}(x_0)= -\infty$ (indeed this may happen, e.g.\ if the point $y_0$ belongs to the set $Y_{\rm ext} \setminus Y_h$). 
Let $u_\ep$ be a holomorphic section of the bundle $\displaystyle K_{X_{y_0}}+ L_{y_0}$, such that the $L^2$ norm 
$\displaystyle \Vert u_\ep\Vert^2_{y_{0}, \ep}:= \int_{X_{y_0}}|u_\ep|^2e^{-\varphi_{L, \ep}}$ is equal to 1, and such that
$$
	\varphi_{\alpha, \ep}(x_0)= \log \vert u^\prime_{\ep, \alpha}(x_0)|^2.
$$
We claim that $\lim_{\ep\to 0}\vert u^\prime_{\ep, \alpha}(x_0)|^2= 0$. 
If this is not true, then let $u_\infty$ be the limit of $u_\ep$; it is not identically zero, given our assumption. Let $\displaystyle e^{m_\ep}$ be the maximum of the function $\displaystyle (\wtil h/h_{L, \ep})|_{X_{y_0}}$, where $\wtil h$ is a smooth, fixed metric on $L$. 
If $m_\ep\to -\infty$, then we obtain a contradiction as follows. 
We have
$$
	\int_{X_{y_0}}|u_\ep|^2e^{-\varphi_{L, \ep}+ m_\ep} = e^{m_\ep}
$$
so that by the definition of $m_\ep$, we infer
$$
	\int_{X_{y_0}}|u_\ep|^2e^{-\phi_{L}} \leq e^{m_\ep},
$$
where $e^{-\phi_L}$ denotes the smooth metric $\wtil h$. 
This cannot happen if $\ep$ is small enough, because $(u_\ep)$ is converging towards a non-identically zero limit. 

Thus, we can assume that $\displaystyle \limsup_{\ep\to 0}m_\ep:= m_\infty > -\infty$; in other words, $y_{0} \in Y_h$. 
Then we have
\begin{equation}\label{section}
	\int_{X_{y_0}}|u_\infty|^2e^{-\varphi_{L}}\leq 1
\end{equation}
by a standard argument (we apply the dominate convergence theorem in the complement of the support of the multiplier ideal sheaf corresponding to $h_L|_{X_{y_0}}$), and this contradicts the fact that $\varphi_{\alpha, X/Y}^{(1)}(x_0)= -\infty$.
\smallskip

\noindent $\bullet$ 
If $\varphi_{\alpha, X/Y}^{(1)}(x_0)> -\infty$, then we know a-priori that $y_{0}\in Y_h$, and by the arguments already used in the preceding case we have $u_\infty:= \lim_\ep u_\ep$, such that (\ref{section}) holds. We easily see that 
$$\varphi_{\alpha, X/Y}^{(1)}(x_0)= \log \vert u^\prime_{\infty, \alpha}(x_0)|^2,$$
as if not, the inequality (\ref{monot}) will be contradicted. 

\noindent This concludes the proof of the upper semi-continuity of $\varphi_{X/Y}^{(1)}$.  
\smallskip

(ii)
Next we show that $\varphi_{X/Y}^{(1)}$ verifies the mean value inequality; at this point we follow the approach outlined by H. Tsuji in \cite{Ts00}, which appears in the article by Guan-Zhou \cite[\S 3.5]{GZ}. 

A first observation is that it is enough to treat the case $\dim Y= 1$. 
Given a smooth projective family of varieties $f: X\to \BD$ over the disc of radius $1$ in $\BC$ (so that $y_{0}$ will correspond to 0), the minimal norm of the extension of twisted canonical forms can be controlled in an optimal way, 
thanks to the important version of Ohsawa-Takegoshi theorem, recently obtained by B\l ocki \cite{blocki}  and \cite{GZ}. 
The result we need states as follows.

\begin{thm} \cite[Thm 2.2]{GZ}\label{xyzhu}
 Given a section $u$ of the bundle
 $\displaystyle (K_{X_{y_0}}+ L_{y_0})\otimes \CI(h_L|_{X_{y_0}})$ 
 and a positive real number $r<1$, there exists a section $U_r$ of 
 $K_{X}+ L$ extending $u$, 
 defined over the pre-image  $f^{-1}(\BD_r)$ of the disk of radius $r$ such that 
\begin{equation}\label{minext}
	\frac{1}{\pi r^2} \int_{t\in \BD_r} 
			\Big\Vert \frac{U_r}{dt}\Big\Vert _{ t}^{2} d\lambda(t) 
		\leq \Vert u\Vert _{y_0}^{2}.
\end{equation}
\end{thm}
\medskip
\noindent Actually, this result is established in \cite{GZ} only in the case of a smooth metric $h_L$; the singular version we need here can be derived from it by the usual regularization procedure.
\smallskip

We consider a particular section $u\in H^0\big(X_{y_0}, K_{X_{y_0}}+L_{y_0}\big)$ so that we have
$$
	B_{1, X/Y}(x_0)=\exp \big(\varphi_{X/Y}^{(1)}(x_0)\big)
	= \frac{\vert u^\prime(x_0)|^2}{\Vert u\Vert _{y_0}}.
$$
We extend $u$ as in Theorem \ref{xyzhu}, and thus we obtain the section $U_r$ defined on $f^{-1}(\BD_r)$ whose $L^{2}$ norm is satisfying the inequality (\ref{minext}).
Let $t\to \gamma(t)$ be a local section of $f$, such that $x_0= \gamma(0)$.
We infer that we have
$$
\varphi^{(1)}_{X/Y}(x_0)
\leq \frac{1}{\pi r^2}
	\int_{t\in \BD_r}\varphi^{(1)}_{X/Y}\big(\gamma(t)\big)d\lambda(t)
$$
thanks to the estimate (\ref{minext}), combined with the concavity of $\log$ and the fact that the absolute value of the holomorphic functions 
is log-psh.

\medskip

\noindent In conclusion, we have showed that the weights of the metric $B_{1, X/Y}^{-1}|_{X_{\rm ext}}$ are psh functions. 
\end{proof}


\subsection{Log-plurisubharmonicity of the $m$-Bergman kernel metric for $m\geq 2$} \label{Sm>1}
 
We still keep the notation $f:X\to Y$ and $(L, h_L=e^{-\varphi_{L}})$.
We will analyze here some of the properties of the metric $(B_{m,X/Y})^{-1}$ on the bundle 
$$
	mK_{X/Y}+ L
$$
whose fiberwise construction was recalled in \ref{NSc}. 
As in the case $m=1$ treated in the previous subsection, we first consider the following non-empty Zariski open subset of $Y$
$$
Y_{m,\rm ext} := \{y \in Y_{0};\ 
		h^0(X_y,mK_{X_y}+L_y) \text{ equals to the rank of } f_{\star}(mK_{X/Y}+L) \},
$$
where $Y_0\subset Y$ is the set of regular values of $f$.
We denote by $Y_{h}=\{y\in Y_{0};\ h_{y}\not\equiv +\infty\}$.

Let $\varphi$ be a psh function on an open set in $\BC^n$. The following ideal sheaf $\CJ_m(\varphi)$ will appear in our considerations
$$
\CJ_{m, x}(\varphi)
:= \big\{ f\in \CO_{\BC^n, x} ;\
	\int_{(\BC^n, x)}\vert f\vert^{2\over m}e^{-\varphi}d\lambda <\infty \big\},
$$
where $d\lam$ is the Euclidean volume form.
Although we will not use it here, we mention the result of \cite[3.4]{JCao}, showing that there exists a psh function with analytic singularities $\psi$ such that we have $\CJ_m(\varphi)= \CJ_m(\psi)$. In particular, the ideal $\CJ_m(\varphi)$ is coherent. 
Moreover, as in the case of the usual multiplier ideal sheaf, the notion above can be formulated in the case of a line bundle endowed with a metric with semi-positive curvature current. 

\noindent 
We extend the definition of $(B_{m, X/Y})^{-1}$ to the Zariski open set 
$$
	X_{m,\rm ext}= f^{-1}(Y_{m,\rm ext})
$$ 
by assigning the value $+\infty$ on the set $f^{-1}(Y_{m, {\rm ext}}\setminus Y_h)$; formally, we have the following.

\begin{dfn} \label{mBo}
{\it Let $b_{m}$ be a smooth Hermitian metric on $-(mK_{X/Y}+L)$.
For every $y \in Y_{m,\rm ext}$, we can write as $B_{m,y} = (b_{m}|_{X_{y}}) \exp (\vph_{y}^{(m)})$, where either $\vph_{y}^{(m)} \in L^{1}_{loc}(X_{y},\BR)$ or $\vph_{y}^{(m)}\equiv -\infty$ on $X_{y}$ (we are using here the notations in {\rm \ref{NSc}}).
The latter holds if and only if $V_{m,y}=H^0(X_y,(mK_{X_y}+L_{y}) \ot \CJ_m(h_y^{1/m}))= 0$ (including the case $y \in Y_{m,\rm ext} \sm Y_{h}$ by convention).
We then define a function 
$$
	\vph_{X/Y}^{(m)o} : X\to [-\infty,+\infty) \ \text{ by }  \ 
	\vph_{X/Y}^{(m)o}(x) = \vph_{y}^{(m)}(x) \text{ if } x \in X_{y}, 
$$
and define
$$
 	B_{m,X/Y}^{o} =b_{m}\exp(\vph_{X/Y}^{(m)o}).
$$
We remark that the metric $B_{m,X/Y}^{o}$ is so far only defined on $X_{m,\rm ext}\subset X$.}
\end{dfn}

By definition, $B_{m,X/Y}^{o}\not\equiv 0$ (non-trivial) if and only if there exists $y \in Y_{h} \cap Y_{m,\rm ext}$ such that $H^0(X_y,(mK_{X_y}+L_{y}) \ot \CJ_m(h_y^{1/m})) \ne 0$.
In the expression above, we have used the identification $K_{X/Y}|_{X_{y}} \cong K_{X_{y}}$ so that 
$b_{m}|_{X_{y}}$ becomes a smooth Hermitian metric on $-(mK_{X_{y}}+L_{y})$.
It is not difficult to see $B_{m,X/Y}^o$ is independent of the choices of these identifications, even though $\vph_{X/Y}^{(m)o}$ depends on them formally.
The choice of a reference metric $b_{m}$ is also not essential.
We may regard $b_{m}$ as a part of local trivializations, and then we may simply write as $B_{m,X/Y}^o =\exp(\vph_{X/Y}^{(m)o})$ and $B_{m,X/Y}^o|_{X_{y}} =B_{m,y}$ for every $y \in Y_{m,\rm ext}$.
\medskip

\noindent We recall a result in \cite{BPDuke} 
 (compare with \cite{Ts05} and the references therein for related results).

\begin{thm}\label{bp42}
\cite[4.2]{BPDuke} 
We assume that the map $f:X\to Y$ is smooth, $Y_{m,\rm ext}=Y$, and that there exists a point $y\in Y_{h}$ such that 
$$
	H^0(X_y,(mK_{X_y}+L_{y}) \ot \CJ_m(h_y^{1/m})) \ne 0.
$$
Then the dual of the relative $m$-Bergman kernel metric:\ $(B_{m, X/Y}^o)^{-1}$ in {\rm \ref{mBo}}  defines a singular Hermitian metric on $mK_{X/Y}+L$ whose associated curvature current is semi-positive.
Moreover, the local weights $\vph_{X/Y}^{(m)o}$ of this metric are upper semi-continuous.
\end{thm}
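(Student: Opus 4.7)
The plan is to adapt the strategy of Theorem \ref{m=1} to the $m$-th pluricanonical setting, with the $L^{2/m}$-extension theorem (Proposition \ref{mOT1}) and the $L^{2/m}$-pseudo-norm $\|\cdot\|_{m}$ of \ref{NSc} taking the roles played in the proof of Theorem \ref{m=1} by the classical Ohsawa--Takegoshi theorem and the $L^{2}$-norm. As in that case, the task splits into (a) upper semi-continuity of the local weight $\varphi_{X/Y}^{(m)o}$, and (b) a sub-mean inequality for $\varphi_{X/Y}^{(m)o}$.

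For (a), approximate $h_{L}=e^{-\varphi_{L}}$ by smooth Hermitian metrics $h_{L,\varepsilon}=e^{-\varphi_{L,\varepsilon}}$ with $\varphi_{L,\varepsilon}\searrow\varphi_{L}$ as $\varepsilon\to 0$ (e.g.\ by global convolution), and define $B_{m,\varepsilon,X/Y}^{o}$ by the recipe of \ref{mBo} using $h_{L,\varepsilon}$. The hypothesis $Y_{m,\mathrm{ext}}=Y$ together with the smoothness of $f$ and of $h_{L,\varepsilon}$ implies that $f_{\star}(mK_{X/Y}+L)$ is locally free and that $h^{0}(X_{y},mK_{X_{y}}+L_{y})$ is constant in $y$. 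Given a local frame $e_{1},\ldots,e_{r}$ of $f_{\star}(mK_{X/Y}+L)$, the pseudo-norm $\|\sum s_{i}e_{i}|_{X_{y}}\|_{m,\varepsilon,y}$ is continuous in $(s_{1},\ldots,s_{r},y)\in \BC^{r}\times Y$ and strictly positive outside $s=0$, so its unit set is compact and varies continuously in $y$; a standard compactness argument then gives continuity of $B_{m,\varepsilon,X/Y}^{o}$ on $X$. As $\varepsilon\to 0$, $h_{L,\varepsilon}$ increases, $\|\cdot\|_{m,\varepsilon}$ increases, the constraint $\|u\|_{m,\varepsilon}\leq 1$ tightens, and hence $B_{m,\varepsilon,X/Y}^{o}\searrow B_{m,X/Y}^{o}$. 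In the dual picture, the local weight of $(B_{m,X/Y}^{o})^{-1}$ is then a decreasing limit of continuous functions and is therefore upper semi-continuous.

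For (b), by slicing it suffices to treat $\dim Y=1$ and work on a smooth family $f:X\to \BD$. Fix $x_{0}\in X_{y_{0}}$, a local section $\gamma$ of $f$ with $\gamma(0)=x_{0}$, and for $\delta>0$ take $u\in H^{0}(X_{y_{0}},mK_{X_{y_{0}}}+L_{y_{0}})$ with $\|u\|_{m,y_{0}}=1$ realizing the supremum in $B_{m,X/Y}^{o}(x_{0})$ to within $\delta$. Applying Proposition \ref{mOT1} to the pair $(f^{-1}(\BD_{r}),X_{y_{0}})$ with weight coming from $h_{L}$ and from the defining equation $t\circ f=0$ of the central fiber, and using Fubini, yields an extension $U_{r}$ of $u$ with
\[
    \tfrac{1}{\pi r^{2}}\int_{\BD_{r}} \|U_{r}|_{X_{t}}\|_{m,t}^{2/m}\,d\lambda(t)\;\leq\;\|u\|_{m,y_{0}}^{2/m}\;=\;1.
\]
By the definition of $B_{m,X/Y}^{o}$ we have $\varphi_{X/Y}^{(m)o}(\gamma(t))\geq \log|U_{r}(\gamma(t))|^{2}-m\log\|U_{r}|_{X_{t}}\|_{m,t}^{2/m}$; combined with the mean-value inequality for the log-psh function $t\mapsto|U_{r}(\gamma(t))|^{2}$ and Jensen's inequality applied to $\log$,
\[
    \tfrac{1}{\pi r^{2}}\int_{\BD_{r}} \log\|U_{r}|_{X_{t}}\|_{m,t}^{2/m}\,d\lambda(t) \;\leq\; \log\Big(\tfrac{1}{\pi r^{2}}\int_{\BD_{r}}\|U_{r}|_{X_{t}}\|_{m,t}^{2/m}\,d\lambda(t)\Big) \;\leq\; 0,
\]
this gives, after letting $\delta\to 0$, the sub-mean inequality $\varphi_{X/Y}^{(m)o}(x_{0})\leq \tfrac{1}{\pi r^{2}}\int_{\BD_{r}}\varphi_{X/Y}^{(m)o}(\gamma(t))\,d\lambda(t)$. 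Together with (a) this proves that $\varphi_{X/Y}^{(m)o}$ is plurisubharmonic.

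The main obstacle is that the concavity step above is only effective with the \emph{optimal} constant in the $L^{2/m}$-extension bound: with any larger constant one would obtain only a quasi-plurisubharmonic conclusion rather than a genuine sub-mean inequality. This is precisely why Proposition \ref{mOT1}, deduced in the paper from the optimal classical Ohsawa--Takegoshi theorem by a fixed-point iteration, is the essential new technical input over the $m=1$ proof.
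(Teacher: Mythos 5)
Your step (a) matches what the paper itself says (the upper semi-continuity ``follows \emph{mutatis mutandis}'' from the $m=1$ argument via monotone approximation of $h_L$), and the algebra in your step (b) -- the two inequalities $\varphi_{X/Y}^{(m)o}(\gamma(t))\ge \log|U'_r(\gamma(t))|^2 - m\log\|U_r|_{X_t}\|_{m,t}^{2/m}$, followed by the sub-mean inequality for $\log|U'_r(\gamma(t))|^2$ and Jensen applied to $t\mapsto\|U_r|_{X_t}\|_{m,t}^{2/m}$ -- is correct \emph{if} the extension $U_r$ exists with the claimed global $L^{2/m}$ bound. But that extension need not exist, and this is precisely the point the paper warns about in the remark immediately following Theorem \ref{bp42}: ``In order to check the mean value inequality, we cannot argue as in the case $m=1$, i.e.\ by using an extension with minimal $L^{2/m}$ norm. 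The reason is very simple: it may happen that some section $u$ of the sheaf $(mK_{X_y}+L_y)\otimes \CJ_m(h_y^{1/m})$ does not admit any extension locally near $X_y$ which moreover is $L^{2/m}$-integrable with respect to the metric $h_L^{1/m}$.'' The hypothesis $y_0\in Y_{m,\rm ext}$ only guarantees that $u$ extends as a \emph{holomorphic} section of $mK_{X/Y}+L$ to a neighborhood of the whole fiber; it does not guarantee that this (or any) extension lies in $\CJ_m(h_L^{1/m})$ semi-globally, and without such an extension there is no candidate $U_r$ to feed into the mean-value computation.

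Your appeal to Proposition \ref{mOT1} masks this problem: that proposition is stated, and proved, for a ball $\Omega\subset\BC^n$, and the fixed-point iteration there starts from an initial holomorphic extension $F_1$ with \emph{finite} $\int_\Omega|F_1|^{2/m}e^{-\varphi}<\infty$, which exists trivially on a small ball with smooth $\varphi$. When you ``apply Proposition \ref{mOT1} to the pair $(f^{-1}(\BD_r),X_{y_0})$'' you are implicitly invoking a global $L^{2/m}$-analogue of Theorem \ref{xyzhu}, and the very first step of the fixed-point scheme -- obtaining some $F_1$ on $f^{-1}(\BD_r)$ with finite $L^{2/m}$-norm against $h_L^{1/m}$ -- is exactly what may fail. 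The paper's own use of the $L^{2/m}$-extension lemma is consistent with this: in Theorem \ref{bp43} it is applied on a small coordinate polydisc $\Omega$ around a single point (the genuinely local setting of Proposition \ref{mOT1}) to produce a uniform upper bound for the weight, not to verify the mean value inequality. For the psh property asserted in Theorem \ref{bp42} the present paper does not give a proof at all: it defers to \cite[4.2]{BPDuke}, whose argument is of a different nature, and it is precisely in order to warn against the route you took that the remark quoted above is included. So the central step of your proposal -- the construction of $U_r$ -- is a genuine gap, and the remaining computations, while correct, do not rescue it.
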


\begin{rem} 
We take this opportunity to point out two {\it imprecisions} in the formulation \cite[4.2]{BPDuke}. 
In the first place, it must be of course assumed that the metric $(B_{m, X/Y}^o)^{-1}$ (called the relative NS-metric in \cite{BPDuke}) is not identically $+\infty$. 
Secondly, the assumption that $f_\star(mK_{X/Y}+ L)$ is locally free should be replaced by $Y_{m,\rm ext}=Y$.
Just before \cite[4.2]{BPDuke}, it is stated that these two conditions are equivalent, but this is not the case. 
In fact, the property of being locally free concerns the direct image $f_\star(mK_{X/Y}+ L)$ as a sheaf: the stalks of this sheaf consists of sections over the fibers of $f$ that {\it do extend} to neighboring fibers, and local freeness means that this space of extendable sections has everywhere the same rank. 
We remark that the direct image $f_\star(mK_{X/Y}+ L)$ is torsion free, hence locally free if the dimension of the base is equal to one. 
For the proof of \cite[4.2]{BPDuke} we need the stronger property $Y_{m,\rm ext}=Y$, saying that all sections extend locally to neighboring fibers.

This distinction between local freeness and the condition $Y_{m,\rm ext}=Y$ is precisely the heart of the question of invariance of plurigenera for smooth projective families over one dimensional base, where the local freeness is automatic, and $Y_{m,\rm ext}=Y$ is exactly what is to be proved. 
\qed
\end{rem}
\medskip

\begin{rem}
We highlight here some of the new difficulties in the proof of \ref{bp42} (along the line given in \ref{m=1} for $m=1$) specific to the case $m\geq 2$; as for the complete argument, we refer to \cite[4.2]{BPDuke}. 

In the first place, the upper semi-continuity of the metric $(B_{m, X/Y}^o)^{-1}$ follows {\it mutatis mutandis} from the arguments in \S 4.1, by using a monotone approximation of the metric $h_L$ with non-singular metrics. Hence we provide no further details about this point here. 

In order to check the mean value inequality, we cannot argue as in the case $m=1$, i.e.\ by using an extension with minimal $L^{2/m}$ norm. 
The reason is very simple: it may happen that some section $u$ of the sheaf 
$(mK_{X_{y}}+L_y)\ot \CJ_m(h_y^{1/m})$ does not admits any extension locally near $X_y$ which moreover is $L^{2/m}$-integrable with respect to the metric $h_L^{1/m}$. 
We would have such an extension e.g.\ if the variety of zeros corresponding to the multiplier ideal of $h_L^{1/m}$ is $f$-vertical, but we do not want/need to impose this additional requirement. 
Also, we remark that even a-posteriori the existence of the metric $(B_{m, X/Y}^o)^{-1}$ \emph{does not} implies the existence of an extension belonging to the ideal $\CJ_m(h_L^{1/m})$, despite of the fact that we know (by the choice of the set $Y_{m,\rm ext}$) that some extension exists.
\qed

\end{rem}
\medskip
 
We now show that the metric $(B_{m,X/Y}^{o})^{-1}$ extends across the set $X\sm X_{m,\rm ext}$;
it is our main result in this section. Prior to this, we give a proof of Proposition \ref{mOT1}, as it plays an important role in our arguments.

\begin{no}
{\it Proof of  {\rm \ref{mOT1}}:\ 
The $L^{2/m}$ version of the Ohsawa-Takegoshi theorem.}\label{SOT}
We recall briefly the set-up in Proposition \ref{mOT1}. 
Let $\Omega\subset \BC^n$ be a ball of radius $r$ and let $\sg:\Omega\to \BC$ be a holomorphic function, such that $\sup_\Omega|\sg|\leq 1$; moreover, we assume that the gradient $\partial \sg$ of $\sg$ is nowhere zero on the set $V:= \{\sg=0\}$. 
We denote by $\varphi$ a plurisubharmonic function on $\Om$, such that its restriction to $V$ is well-defined (i.e., $\varphi_{|V}\not\equiv -\infty$). 

Let $u$ be a  holomorphic function on $V$ with the property that 
$$
	\int_{V} |u|^{2/m}\exp (-\varphi){{d\lambda_V}\over {|\partial \sg|^2}}= 1.
$$ 
The question is to find an extension of $u$ to $\Omega$, whose $L^{2/m}$ norm is 
bounded by the quantity above, modulo a universal constant. 
We may suppose $m>1$.

We begin with some reductions. In the first place we can assume that the function $\varphi$ is smooth, and that the functions $\sg$ (respectively $u$) can be extended in a neighbourhood of $\Omega$ (of $V$ inside $V \cap \ol\Omega$, respectively). 
Once the result is established under these additional assumptions, the general case follows by approximations and standard normal families arguments.

We can then clearly find some holomorphic $F_1$ in $\Omega$ that extends $u$ and satisfies
$$
\int_\Omega |F_1|^{2/m}\exp(-\varphi)d\lambda\leq A<\infty.
$$
We then apply the Ohsawa-Takegoshi theorem with weight
$$
\varphi_1=\varphi +(1-1/m)\log|F_1|^2
$$
and obtain a new extension $F_2$ of $f$ satisfying
$$
\int_{\Om} {{|F_2|^2}\over{|F_1|^{2-2/m}}}\exp(-\varphi)\, d\lambda
\leq C_0\int_{V} {{|u|^2}\over{|F_1|^{2-2/m}}}\exp(-\varphi) \,{{d\lambda_V}\over{|\partial \sg|^2}}
=C_0.
$$
H\"older's inequality gives that
$$
\int_\Omega |F_2|^{2/m}\exp(-\varphi)\, d\lambda \int_\Omega {{|F_2|^{2/m}}\over{|F_1|^{(2-2/m)/m}}}|F_1|^{(2-2/m)/m}\exp(-\varphi) \,d\lambda\leq 
$$
$$
\leq \left(\int_\Omega {{|F_2|^2}\over{|F_1|^{2-2/m}}}\exp(-\varphi) d\lambda \right)^{1/m}
\left(\int_\Omega|F_1|^{2/m}\exp(-\varphi)d\lambda \right)^{(m-1)/m}
$$
which is smaller than
$$
C_0^{1/m} A^{(m-1)/m}= A(C_0/A)^{1/m}=:A_1.
$$
If $A>C_0$, then $A_1<A$. We can then repeat the same argument with $F_1$ replaced by $F_2$, etc, and get a decreasing sequence of constants $A_k$, such that 
$$A_{k+1}= A_k(C_0/A_k)^{1/m}$$
for $k\geq 0$. It is easy to see that $A_k$ tends to $C_0$. Indeed, if $A_k >r C_0$ for some $r>1$, then $A_k$ would tend to zero by the relation above. This completes the proof.
\qed
\end{no}

\noindent 
For further use, we reformulate Proposition \ref{mOT1} in terms of differential forms. 
Since this is absolutely immediate, we provide no further explanations.
As in \ref{NSc}(1), we will denote by $|u|^{2\over m}=(cu\wed\ol u)^{1/m}$ for $u\in H^{0}(V, mK_{V})$,  with an absolute constant $c$.

\begin{lem} \label{mOT}
Let $m$ be a positive integer, and
let $u$  a holomorphic $m$-canonical form on $V$ such that 
$$
	\int_{V} |u|^{2\over m}e^{-\varphi}<\infty.
$$
Then there exists a holomorphic $m$-canonical form $U$ on $\Omega$ such that:
\item{(i)} $U= u\wed (d\sg)^{\otimes m}$ on $V$ i.e. the form $U$ is an extension of $u$.
\smallskip
\item{(ii)} The next $L^{2/m}$ bound holds true
$$\int_{\Omega}  |U|^{2\over m}e^{-\varphi} \leq C_0\int_{V}  |u|^{2\over m}e^{-\varphi}, $$
where $C_0$ is the constant in the Ohsawa-Takegoshi extension theorem. 
\end{lem}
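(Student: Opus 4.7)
The plan is to reduce Lemma \ref{mOT} directly to Proposition \ref{mOT1} by trivializing both $mK_\Om$ and $mK_V$ so that $m$-canonical forms become holomorphic functions. Because $\Om \subset \BC^n$, the bundle $K_\Om$ has the global frame $dz := dz_1 \wed \ldots \wed dz_n$, so any $m$-canonical form on $\Om$ has the unique presentation $U = G \cdot (dz)^{\ot m}$ with $G \in \CO(\Om)$. For $V = \{\sg = 0\}$, the Poincar\'e residue construction gives a globally defined, nowhere vanishing section $\gc_0 \in H^0(V, K_V)$: writing $dz = \gc \wed d\sg$ locally on $\Om$ with $\gc$ a holomorphic $(n-1,0)$-form, the restriction $\gc_0 := \gc|_V$ is independent of the (non-unique) choice of $\gc$ and patches to a global section. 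Every $m$-canonical form on $V$ is then uniquely of the form $u = f \cdot \gc_0^{\ot m}$ with $f \in \CO(V)$.

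With these identifications in place, both the extension condition (i) and the $L^{2/m}$ bound (ii) translate cleanly. First, (i) amounts to $U|_V = u \wed (d\sg)^{\ot m} = f \cdot \gc_0^{\ot m} \wed (d\sg)^{\ot m} = f \cdot (\gc \wed d\sg)^{\ot m}|_V = f \cdot (dz)^{\ot m}|_V$, which is exactly $G|_V = f$; hence producing an extension $U$ of $u$ in the sense of (i) is the same as producing a holomorphic extension $G$ of the function $f$ from $V$ to $\Om$. Second, unpacking the definition of $|\cdot|^{2/m}$ recalled just before the lemma gives $|U|^{2/m} = |G|^{2/m} d\lam$ on $\Om$ (with the standard normalization of the absolute constant $c$) and $|u|^{2/m} = |f|^{2/m} (c \,\gc_0 \wed \ol{\gc_0})$ on $V$, so both sides of the inequality in (ii) reduce, respectively, to the two sides of the inequality in Proposition \ref{mOT1}, \emph{provided} one knows the pointwise identity
$$
	c \, \gc_0 \wed \ol{\gc_0} \;=\; \frac{d\lam_V}{|\rd \sg|^2}
$$
on $V$.

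This last identity is the main (and essentially only) point requiring verification. It is a standard local computation: in any chart where $\rd \sg / \rd z_n \ne 0$ one may take $\gc = (\rd \sg/\rd z_n)^{-1} dz_1 \wed \ldots \wed dz_{n-1}$, after which the equality reduces to the disintegration formula $|\rd \sg|^2 d\lam = d\lam_V \wed d\lam_{\BC}(\sg)$ that fixes the convention for $d\lam_V$ used in the Ohsawa--Takegoshi statement recalled in the introduction. Coordinate invariance of both sides under holomorphic changes of variables on $\Om$ preserving $V$ then upgrades this to a chart-independent identity on $V$. Once it is in hand, Proposition \ref{mOT1} applied to $f$ yields $G \in \CO(\Om)$ with $G|_V = f$ and the required $L^{2/m}$ bound with the same constant $C_0$, and setting $U := G \cdot (dz)^{\ot m}$ produces the $m$-canonical form satisfying (i) and (ii).
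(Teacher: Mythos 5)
Your proposal is correct, and it follows exactly the route the paper has in mind: the paper explicitly declares Lemma~\ref{mOT} to be an ``absolutely immediate'' reformulation of Proposition~\ref{mOT1} in terms of differential forms and supplies no further argument, and you have simply spelled out the trivializations (via $dz$ on $\Om$ and the Poincar\'e residue frame $\gc_0$ on $V$), the reduction of (i) to $G|_V=f$, and the pointwise identity $c\,\gc_0\wed\ol{\gc_0}=d\lambda_V/|\rd\sg|^2$ that makes (ii) into the inequality of Proposition~\ref{mOT1}.
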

\medskip

\noindent After this preparation, we can complete the proof of the following result.


\begin{thm}\label{bp43}
We assume that there exists a point $y\in Y_h\cap Y_{m,\rm ext}$ such that 
$$
	H^0(X_y, (mK_{X_{y}}+L_y)\ot \CJ_m(h_y^{1/m}))\ne 0.
$$
Then the metric $(B_{m, X/Y}^{o})^{-1}$ on $\displaystyle (mK_{X/Y}+L)|_{f^{-1}(Y_{m,\rm ext})}$ obtained in {\rm \ref{bp42}} is not identically $+\infty$, and it
extends as a singular Hermitian metric $(B_{m, X/Y})^{-1}$ on $mK_{X/Y}+L$ with semi-positive curvature current.  
\end{thm}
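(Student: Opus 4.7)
The plan is to leverage the $L^{2/m}$-Ohsawa--Takegoshi extension theorem (Proposition \ref{mOT1}, equivalently Lemma \ref{mOT}) in order to prove that the local weights of the metric $(B_{m,X/Y}^o)^{-1}$ on $X_{m,\rm ext}=f^{-1}(Y_{m,\rm ext})$---already plurisubharmonic there by Theorem \ref{bp42}---are locally bounded above near every point of $X$. Since $X\sm X_{m,\rm ext}$ is a proper analytic subset of $X$, once this upper bound is established the classical Riemann-type extension theorem for psh functions automatically furnishes a canonical psh extension of the weights to all of $X$, and this produces the desired singular Hermitian metric $(B_{m,X/Y})^{-1}$ on $mK_{X/Y}+L$ with semi-positive curvature current. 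The standing hypothesis that $H^0(X_y,(mK_{X_y}+L_y)\ot\CJ_m(h_y^{1/m}))\ne 0$ for some $y\in Y_h\cap Y_{m,\rm ext}$ is precisely what guarantees the resulting metric is not identically $+\infty$.

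For the local upper bound, fix $x_0\in X$ and let $y_0=f(x_0)$. Choose small coordinate polydisks $\Om\subset X$ around $x_0$ and $B\subset Y$ around $y_0$ with $f(\Om)\subset B$, together with trivializations of $L$ and $mK_X$ on $\Om$. In these trivializations $B_{m,X/Y}^o$ is represented by a non-negative function, and bounding its local weight from above amounts to bounding
\[
\sup\bigl\{\,|u'(x)|^{2/m}\;:\;u\in V_{m,f(x)},\ \|u\|_{m,f(x)}\le 1\bigr\}
\]
uniformly as $x$ ranges over a smaller polydisk $\Om'\Subset\Om$ with $f(x)\in Y_{m,\rm ext}$. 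For such $x$ and $u$, Proposition \ref{mOT1} (applied $\dim Y$ times, once for each coordinate cutting out the smooth fiber $V:=X_{f(x)}\cap\Om$ inside $\Om$) yields a holomorphic extension $U$ of $u|_V$ to $\Om$ satisfying
\[
\int_{\Om}|U|^{2/m}e^{-\vph_L/m}\,d\lam\;\le\;C\,\|u\|_{m,f(x)}^2\;\le\;C,
\]
where $C$ depends only on $\Om$, $B$, $f$ and $h_L$. Since $\log\bigl(|U'|^2 e^{-\vph_L}\bigr)$ is plurisubharmonic on $\Om$, the sub-mean value inequality applied to $|U'|^{2/m}e^{-\vph_L/m}$ on a ball $B(x,r)\Subset\Om$ of radius bounded below by a constant independent of $x\in\Om'$ gives $|U'(x)|^{2/m}e^{-\vph_L(x)/m}\le C'$. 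As $U|_V=u$, one concludes $|u'(x)|^{2/m}\le C''$, uniformly in $u$ and in $x\in\Om'$; taking the supremum over $u$ yields the desired uniform upper bound on $B_{m,X/Y}^o$ over $\Om'\cap X_{m,\rm ext}$.

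The delicate point, and the main obstacle, is securing the uniformity of the constant $C$ above as $x$ ranges in $\Om'\cap f^{-1}(Y_{m,\rm ext})$---in particular when $f(x)$ tends to a point of $Y\sm Y_0$, so that the fibers $X_{f(x)}$ brush against the critical set of $f$ inside $\Om$. The right-hand side of Proposition \ref{mOT1} carries a factor $|\rd\sg|^{-2}$ on $V$ which a priori blows up along the critical locus of $f$. The way to control this is to take the defining functions $\sg_i$ of $V$ in $\Om$ to be the pullbacks by $f$ of affine-linear coordinates on $Y$ centered at $f(x)$; since such coordinates differ from the fixed ones on $B$ only by a translation, the holomorphic differentials $\rd\sg_i$ are independent of $f(x)$, and the ratio between the measure $d\lam_V/|\det\rd\sg|^2$ appearing in the $L^{2/m}$-estimate and the intrinsic fiber volume form entering $\|u\|_{m,f(x)}^2$ is bounded by a quantity that depends only on $f|_{\Om}$ and $h_L|_{\Om}$. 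This is the analogue, in the $L^{2/m}$-setting, of the uniform upper bound achieved in the case $m=1$ in \cite{BPDuke} via the classical Ohsawa--Takegoshi theorem, and it closes the argument.
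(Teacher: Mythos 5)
Your proposal follows the paper's own route: extend fiberwise normalized $m$-canonical sections to the ambient chart via the $L^{2/m}$-Ohsawa--Takegoshi theorem, bound the extension's value at $x$ by the sub-mean value inequality, and then extend the resulting locally bounded psh weights across $X\setminus X_{m,\rm ext}$. However, the mean-value step contains a genuine error: you assert that $\log\bigl(|U'|^2 e^{-\vph_L}\bigr)$ is plurisubharmonic and apply the sub-mean value inequality to $|U'|^{2/m}e^{-\vph_L/m}$. Since $\vph_L$ is psh (semi-positivity of the curvature of $h_L$ means $\ai\rd\rdb\vph_L\ge 0$), the function $\log|U'|^2-\vph_L$ is a \emph{difference} of psh functions and is not psh, so $|U'|^{2/m}e^{-\vph_L/m}$ does not satisfy the sub-mean value property on balls and your inequality $|U'(x)|^{2/m}e^{-\vph_L(x)/m}\le C'$ is unjustified. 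The paper instead applies the mean value to $|U'|^{2/m}$ alone, which is psh because $U'$ is holomorphic, and then estimates $\int_{\Om'}|U'|^{2/m}\,d\lam_z\le e^{\frac1m\sup_{\Om'}\vph_L}\int_{\Om'}|\wtil U|^{2/m}e^{-\vph_L/m}\le C_0\,e^{\frac1m\sup_{\Om'}\vph_L}$, using only that a psh function is locally bounded above. This gives $|u'(x)|^{2/m}\le C$ with $C$ depending only on the chart, $C_0$, and $\sup_{\Om'}\vph_L$; the conclusion you want is correct, but the route you take to it is not.

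Your closing ``delicate point'' about the $|\rd\sg|^{-2}$ factor blowing up near the critical locus is also misplaced. The paper invokes the canonical-forms reformulation of the $L^{2/m}$-extension theorem (Lemma \ref{mOT}), in which the trace condition $U=u\wed(d\sg)^{\ot m}$ absorbs the $|\rd\sg|^{-2}$ weight and both sides of the estimate become intrinsic $L^{2/m}$ pseudo-norms, so that the right-hand side is exactly the normalization $\int_{X_y}|u|^{2/m}h_y^{1/m}\le 1$ used to define $B_{m,X/Y}^o$. No Jacobian of $f$ and no comparison between $d\lam_V/|\det\rd\sg|^2$ and the fiber volume form is needed; the constant $C_0$ is absolute, and the uniformity recorded in Remark \ref{A23} is automatic. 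Your proposed bounded-ratio argument is both unnecessary and, as stated, not actually a proof.
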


\begin{rem}
We ignore whether the equality $B_{m,X/Y}|_{X_{y}}= B_{m,y}$ holds for $y\in Y_{0}\sm Y_{m,\rm ext}$.
This issue will be discussed later in \ref{A26}.
\end{rem}

\begin{proof}
We recall 
the expression of $B_{m,X/Y}^o$ on a fiber of a point $y \in Y_{m,\rm ext}$.
Let $x\in X_y$ be an arbitrary point, and denote by $z=(z_{1},\ldots,z_{k})$ respectively $t=(t_{1},\ldots,t_{\ell})$ some local coordinates centered at $x$ on an open set $\Om$, respectively $y$, and assume $L$ is also trivialized on $\Om$.
We consider  a section $u\in H^0(X_y, mK_{X_y}+L_{y})$.
Then we have $\wtil u = u \wed (f^* dt)^{\ot m} \in H^0(X_y, (mK_{X}+ L)_{|X_{y}})$ via the standard identification. 
We write as $\wtil u = u' (dz)^{\ot m}$ with $u' \in H^{0}(\Om_{y}, \CO_{X_y})$, where $\Om_{y}=\Om \cap X_{y}$, and then we have
$$
	B_{m,X/Y}^o(x)=\exp \big(\varphi^{(m)o}_{X/Y}(x)\big)
	= \sup _{\Vert u\Vert _{m, y}\leq 1}\vert u^\prime(x)|^2, 
$$
where $\|u\|_{m,y} =(\int_{X_y} |u|^{2/m} h_y^{1/m})^{m/2}$. Thus, our task is to show that the above quantity is bounded from above on
the open set $X_{m,\rm ext}\subset X$. Let $u$ be a
global section of the bundle $\displaystyle mK_{X_y}+ L_{y}$, normalized by the condition
$$
	\int_{X_y} |u|^{2\over m}e^{-{1\over m}\varphi_{L}}\leq 1.
$$
By Lemma \ref{mOT} we obtain a local $m$-canonical form 
$\wtil U$ on $\Omega$ whose restriction to $\Omega_y$ is equal to $\wtil u$, and such that the next inequality holds
$$
\int_\Omega |\wtil U|^{2\over m}e^{-{1\over m}\varphi_{L}}
\leq C_0 \int_{\Omega_y}|u|^{2\over m}e^{-{1\over m}\varphi_{L}}
\le C_0,
$$
where $C_0$ is an absolute constant. 
We write as $\wtil U=U' (dz)^{\ot m}$ with $U'\in H^{0}(\Om, \CO_{X})$.
Let $\Om'\subset \Om$ be the polydisc of poly-radius $(r, \ldots, r)$ centered at $x$.
Then by the mean value inequality, we have
$$
	|U'(x)|^{2\over m} 
	\le \frac1{(\pi r^2)^{\dim X}} \int_{\Om'} |U'|^{2 \over m} d\lambda_z
	\le \frac1{(\pi r^2)^{\dim X}}\ e^{\frac1m \sup_{\Om'} \vph_L} \int_{\Om'} |\wtil U|^{2\over m}e^{-{1\over m}\varphi_{L}}.
$$
As $U'|_{\Om_{y}}=u'$, we thus have
$$
	|u^\prime(x)|^{2\over m}\leq C, \leqno (\sharp)
$$ 
where moreover the bound $C$ above does not depend at all on the geometry of the fiber $X_y$, but on the ambient manifold $X$. 
(A direct application of the mean value inequality for $|u'|^{2/m}$ on $\Om_{y}$ would encode the geometry of the fiber $X_{y}$.) 
In particular, the metric $(B_{m,X/Y}^o)^{-1}$ admits an extension to the whole manifold $X$ and its curvature current is semi-positive, so \ref{bp43} is proved. 
\end{proof}
\medskip
\subsection{A few consequences}\label{SAfew}

We further investigate the positivity of $mK_{X/Y}+L$ and give some variants and refinements of \ref{bp43}.
We still keep the notation $f:X\to Y$ and $(L,h_{L}=e^{-\vph_L})$.
The crucial observation is as follows.

\begin{rem}\label{A23} 
The constant $``C "$ in the relation $(\sharp)$ of the proof of \ref{bp43} above only depends on the sup of the $m^{th}$-root of the metric of the bundle $L$ and the geometry of the ambient manifold $X$ (and {\it not at all} on the geometry of the fiber $X_y$). 
As a consequence we see that if we apply the above arguments to a sequence $mK_{X/Y}+ L$ when $m$ varies, then the constant in question will be uniformly bounded with respect to $m$, provided that $(1/m)\varphi_{L}$ is bounded from above. 
\end{rem}

As a consequence of \ref{A23}, we obtain the following corollaries (see also \cite{Dpark} and \cite{Ts05} for related statements).

\begin{cor}\label{A24}
Suppose that there exists a very general point $w\in Y$ such that we have
$$H^0(X_w, (mK_{X_{w}}+L_w)\ot \CJ_m(h_w^{1/m}))\ne 0.$$

\noindent Then the bundle  $mK_{X/Y}+ L$ admits a singular Hermitian metric $h_{X/Y}^{(\infty)}=\exp(- \varphi^{(\infty)}_{X/Y})$ with semi-positive curvature current, whose restriction to any very general fiber $X_w$ of $f$ has the following (minimality) property: 
for any integer $k>0$ and any section $\displaystyle v\in H^0\big(X_w, kmK_{X_w}+ kL_{w}\big)$, we have
$$
	|v|^{2\over km}e^{-\frac1{m} \varphi^{(\infty)}_{X/Y}}
	\leq	\int_{X_w} |v|^{2\over km}e^{-{1\over m}\varphi_L}
$$
up to the identification of $K_{X/Y|X_w}$ with $K_{X_w}$.
\end{cor}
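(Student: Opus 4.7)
The plan is to realize $h_{X/Y}^{(\infty)}$ as the upper envelope of the $k$-th roots of the relative Bergman metrics $(B_{km,X/Y})^{-1}$ produced by Theorem \ref{bp43} for varying $k \geq 1$, and then to read off the minimality property by restricting to a very general fiber.

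First I would apply Theorem \ref{bp43} with $(m,L,h_L)$ replaced by $(km,kL,h_L^{k})$ for each $k \geq 1$. Given a non-zero $u \in H^{0}(X_w,(mK_{X_w}+L_w) \otimes \CJ_m(h_w^{1/m}))$ supplied by the hypothesis, the section $u^{\otimes k}$ belongs to $\CJ_{km}(h_w^{1/m})$ because
$$
\int_{X_w}|u^{\otimes k}|^{2/(km)}\, h_w^{1/m} = \int_{X_w}|u|^{2/m}\, h_w^{1/m} < \infty;
$$
refining ``very general'' by the countably many Zariski open sets $Y_{km,\rm ext}$ arising from the scaled setup, the hypothesis of \ref{bp43} is simultaneously met for every $k$ at the same $w$. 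For each such $k$ we thus obtain a singular Hermitian metric $h_{X/Y}^{(km)} = e^{-\varphi^{(km)o}_{X/Y}}$ on $kmK_{X/Y}+kL$ with psh weight. The crucial input from Remark \ref{A23} is that the constant $C$ in the estimate $(\sharp)$ inside the proof of \ref{bp43} depends only on $\sup \tfrac{1}{km}(k\varphi_L)=\sup \tfrac{1}{m}\varphi_L$ and on local ambient data of $X$, so it is independent of $k$. Consequently, on any relatively compact coordinate set one has $\varphi^{(km)o}_{X/Y} \leq km\log C$, and the psh weights $\tfrac{1}{k}\varphi^{(km)o}_{X/Y}$ of the bundle $mK_{X/Y}+L$ form a locally uniformly bounded above family.

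I would then set
$$
\varphi^{(\infty)}_{X/Y} := \Bigl(\sup_{k\geq 1}\tfrac{1}{k}\varphi^{(km)o}_{X/Y}\Bigr)^{\!*},
$$
the upper semi-continuous regularization. By the standard fact that the usc regularization of a locally bounded supremum of psh functions is psh, $\varphi^{(\infty)}_{X/Y}$ is a psh local weight, so $h_{X/Y}^{(\infty)} := e^{-\varphi^{(\infty)}_{X/Y}}$ is a singular Hermitian metric on $mK_{X/Y}+L$ with semi-positive curvature current.

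For the minimality, I restrict to a very general $w$, for which the defining property in \ref{mBo} yields $B^{o}_{km,X/Y}|_{X_w} = B_{km,w}$ for every $k$ simultaneously. For $v\in H^{0}(X_w, kmK_{X_w}+kL_w)$, the fiberwise Bergman kernel definition gives $|v(x)|^{2} \leq B_{km,w}(x)\,\|v\|_{km,w}^{2}$; extracting $(1/(km))$-th roots and unwinding $\|v\|_{km,w}^{2/(km)} = \int_{X_w}|v|^{2/(km)}e^{-\varphi_L/m}$ produces
$$
|v(x)|^{2/(km)} \leq e^{\varphi^{(km)o}_{X/Y}(x)/(km)} \int_{X_w}|v|^{2/(km)}e^{-\varphi_L/m}.
$$
The inequality $f^{*} \geq f$ valid pointwise for the usc regularization gives $\tfrac{1}{km}\varphi^{(km)o}_{X/Y}(x) \leq \tfrac{1}{m}\varphi^{(\infty)}_{X/Y}(x)$ for \emph{every} $x \in X$, and multiplying the displayed inequality by $e^{-\varphi^{(\infty)}_{X/Y}(x)/m}$ yields the asserted bound. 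The main subtlety will be to arrange the ``very general'' locus in $Y$ to simultaneously avoid the countably many Zariski closed conditions needed to guarantee $B^{o}_{km,X/Y}|_{X_w} = B_{km,w}$ for all $k$; since these are only countably many, this is harmless, but it is precisely what forces the statement to be formulated for very general $w$ rather than for an arbitrary regular value.
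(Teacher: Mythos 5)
Your proof follows the paper's argument essentially verbatim: apply Theorem \ref{bp43} to the data $(km,kL,h_L^{k})$, invoke Remark \ref{A23} for the uniform upper bound on $\tfrac{1}{k}\varphi^{(km)o}_{X/Y}$, and form the upper envelope $\varphi^{(\infty)}_{X/Y}=\sup^{\star}_{k\geq 1}\tfrac{1}{k}\varphi^{(km)o}_{X/Y}$. You supply useful details that the paper leaves implicit — the tensor-power check that $u^{\otimes k}\in\CJ_{km}(h_w^{1/m})$, the countable refinement of the very general locus, and the derivation of the minimality estimate from the fiberwise Bergman inequality combined with $\tfrac{1}{km}\varphi^{(km)o}_{X/Y}\leq\tfrac{1}{m}\varphi^{(\infty)}_{X/Y}$ — but the route is the same.
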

  
\medskip 

\noindent In the statement of \ref{A24}, we call a point $w\in Y$ {\it very general} if it is not a critical point of $f$ and any section of the bundle $k(mK_{X/Y}+ L)_{|X_w}$ extends over the nearby fibers, for any integer $k>0$.
By the usual semi-continuity arguments we infer that the set of very general points is the complement of a countable union of Zariski closed sets of codimension at least one in $Y$. 
If $h_{L}$ has a better regularity, every regular value $y\in Y_{0}$ of $f$ can be very general.
For example, if $h$ is continuous, or more generally the multiplier ideal $\CI(h_{y}^{k})=\CO_{X_{y}}$ on $X_{y}$ with $y \in Y_{0}$ for any integer $k>0$, the invariance of plurigenera \cite{Siu} and \cite{Paun} verify this.

\smallskip

\noindent 
\begin{proof}[Proof of \rm\ref{A24}]
The  proof is an immediate consequence of the arguments of \ref{bp43}. Indeed, we consider the dual of the $km$--Bergman metric (twisted by $kL$) on the bundle
$k\big(mK_{X/Y}+ L\big)$ 
and we know that the weights of its $k^{\rm th}$-root are {\it uniformly} bounded, independently of $k$ (see \ref{A23}). We obtain the metric $h_{X/Y}^{(\infty)}=\exp(- \varphi^{(\infty)}_{X/Y})$ by the usual upper envelope construction, namely
$$
	\varphi^{(\infty)}_{X/Y}
	:= {\sup} _{k\geq 1}^\star \Big( {1\over k}\varphi^{(km)}_{X/Y}\Big).
$$
\end{proof}

\noindent Corollary \ref{A24} admits a metric version which we discuss next.

\begin{cor}\label{A25}
Suppose that $X$ is projective, and that there exists a very general point $w\in Y$ and a singular Hermitian  metric $h_{w}=e^{-\vph_{w}}$ of the bundle $mK_{X_w}+ L_{w}$ with semi-positive curvature such that an integrability condition
$$
e^{\varphi_w-\varphi_L}|_{X_{w}}\in L^{{{1+\ep}\over {m}}}
$$
holds locally at each point of $X_w$, where $\ep > 0$ is a real number.
(Here $h_w$ does not mean $h_L|_{X_w}$.)

\noindent Then the bundle $mK_{X/Y}+ L$ admits a singular Hermitian metric with semi-positive curvature, whose restriction to $X_w$ is less singular than the metric $h_{w}$.
\end{cor}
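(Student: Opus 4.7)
The plan is to adapt the Bergman-envelope construction of Corollary \ref{A24}, replacing the algebraic sections of $mK_{X_{w}}+L_{w}$ used there by peak sections manufactured from the metric $h_{w}$. Since $X$ is projective, I would first fix an ample line bundle $A$ on $X$ endowed with a smooth Hermitian metric $h_{A}=e^{-\varphi_{A}}$ of strictly positive curvature $\Th_{h_A}$. For each integer $k\ge 1$, I would then run the relative $(km)$-Bergman construction of Theorem \ref{bp43} on the line bundle $k(mK_{X/Y}+L)+A$, with the $L^{2/km}$-norms weighted by $h_{L}^{k}h_{A}$ on the non-canonical factor $kL+A$; denote the resulting Bergman function by $B_{km,A,X/Y}$. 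Since $w$ is very general, $B_{km,A,X/Y}^{-1}|_{X_{w}}$ is the fiberwise $(km)$-Bergman metric on $k(mK_{X_{w}}+L_{w})+A|_{X_{w}}$.

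The key new input is a uniform lower bound $B_{km,A,X/Y}(x_{0})\ge c_{0}\,e^{k\varphi_{w}(x_{0})+\varphi_{A}(x_{0})}$ valid at every $x_{0}\in X_{w}$ with $\varphi_{w}(x_{0})>-\infty$ and every sufficiently large $k$, with $c_{0}$ independent of $k$ and $x_{0}$. I would obtain it by applying the usual peak-section construction---Ohsawa-Takegoshi at the zero-dimensional subscheme $\{x_{0}\}\subset X_{w}$, or equivalently H\"ormander's $L^{2}$-estimate with a logarithmic-pole weight at $x_{0}$---on the bundle $k(mK_{X_{w}}+L_{w})+A|_{X_{w}}$ equipped with the strictly positively curved metric $h_{w}^{k}h_{A}|_{X_{w}}$. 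Since the strict positivity comes entirely from $h_{A}$, the resulting constants are uniform in $k$, yielding a section $s_{k,x_{0}}\in H^{0}(X_{w},k(mK_{X_{w}}+L_{w})+A|_{X_{w}})$ with $|s_{k,x_{0}}(x_{0})|^{2}=e^{k\varphi_{w}(x_{0})+\varphi_{A}(x_{0})}$ in a local trivialization and $\int_{X_{w}}|s_{k,x_{0}}|^{2}\,e^{-k\varphi_{w}-\varphi_{A}}\le C$ for a constant $C$ depending only on $A$ and the ambient geometry.

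To feed $s_{k,x_{0}}$ into the Bergman construction I would convert this $L^{2}$-bound into an $L^{2/km}$-bound via the pointwise factorization
\[
|s_{k,x_{0}}|^{2/km}e^{-\varphi_{L}/m-\varphi_{A}/km}
=\bigl(|s_{k,x_{0}}|^{2}e^{-k\varphi_{w}-\varphi_{A}}\bigr)^{1/km}\cdot e^{(\varphi_{w}-\varphi_{L})/m},
\]
in which the $\varphi_{A}$ contributions cancel cleanly, combined with H\"older's inequality of exponents $km$ and $km/(km-1)$:
\[
\int_{X_{w}}|s_{k,x_{0}}|^{2/km}e^{-\varphi_{L}/m-\varphi_{A}/km}
\ \le\ C^{1/km}\Bigl(\int_{X_{w}}e^{(\varphi_{w}-\varphi_{L})k/(km-1)}\Bigr)^{(km-1)/km}.
\]
Since $k/(km-1)\le(1+\varepsilon)/m$ once $k\ge(1+\varepsilon)/(\varepsilon m)$, the hypothesis $e^{\varphi_{w}-\varphi_{L}}|_{X_{w}}\in L^{(1+\varepsilon)/m}$ bounds the right-hand side by a constant $C_{0}$ independent of $k$ and $x_{0}$. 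Rescaling $s_{k,x_{0}}$ to have unit $L^{2/km}$-norm and evaluating at $x_{0}$ then delivers the announced lower bound on $B_{km,A,X/Y}$.

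The desired metric is finally $h^{(\infty)}_{X/Y}:=e^{-\Phi}$, where
\[
\Phi:=\sup\nolimits^{\star}_{k\ge 1}\Bigl(\tfrac{1}{k}\log B_{km,A,X/Y}-\tfrac{1}{k}\varphi_{A}\Bigr)
\]
is viewed as a potential on $mK_{X/Y}+L$. Each summand is quasi-psh with $\ai\rd\rdb$-defect at most $\tfrac{1}{k}\Th_{h_A}$, which tends to zero as $k\to\infty$; a standard regularization argument shows $\Phi$ is psh and $h^{(\infty)}_{X/Y}$ has semi-positive curvature current. The pointwise bound above translates into $(\tfrac{1}{k}\log B_{km,A,X/Y}-\tfrac{1}{k}\varphi_{A})(x_{0})\ge\varphi_{w}(x_{0})-M$ for $x_{0}\in X_{w}$ and $k$ large with $M$ uniform, hence $\Phi|_{X_{w}}\ge\varphi_{w}-M$, which is exactly the assertion that $h^{(\infty)}_{X/Y}|_{X_{w}}$ is less singular than $h_{w}$. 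The hardest part will be the uniformity in the peak-section step: Ohsawa-Takegoshi (or H\"ormander with a log-pole weight) must produce constants uniform in $k$ and $x_{0}$ despite the singular weight $h_{w}^{k}$, and the upper-envelope regularization requires care in controlling the vanishing $\tfrac{1}{k}\Th_{h_A}$-defect as $k\to\infty$.
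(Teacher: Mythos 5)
Your argument follows the same overall strategy as the paper's: introduce an auxiliary ample $A$ with strictly positive curvature, apply the relative $(km)$-Bergman construction of \ref{bp43} to $k(mK_{X/Y}+L)+A$, convert $L^{2}$-bounds on the fibre $X_{w}$ to $L^{2/km}$-bounds via H\"older using the $L^{(1+\ep)/m}$ integrability hypothesis, and take an upper envelope normalized by $1/k$ so that the $A$-contribution disappears in the limit. Your H\"older factorization and exponent count ($k\ge (1+\ep)/(\ep m)$) reproduce the paper's relations (\ref{equa4})--(\ref{equa5}) faithfully.

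The one genuine difference is the step producing the lower bound on $B_{km,A,X/Y}$ along $X_{w}$. You propose to manufacture, at each $x_{0}\in X_{w}$, a peak section $s_{k,x_{0}}$ of $k(mK_{X_{w}}+L_{w})+A|_{X_{w}}$ via Ohsawa--Takegoshi/H\"ormander, with an $L^{2}$-bound controlled uniformly in $k$ because the strict positivity $\Th_{h_{A}}$ is fixed. The paper instead considers the Bergman space $\cV_{k}$ of (\ref{equa1})--(\ref{equa2}), and invokes Demailly's Bergman-kernel approximation theorem \cite[13.21]{Dnote}, i.e.\ the inequality $k\varphi_{w}+\varphi_{A}\leq\varphi_{w}^{(k)}+C_{1}$ with $C_{1}$ independent of $k$. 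Since Demailly's theorem is itself established precisely by the peak-section construction you sketch (with uniformity coming from the fixed $A$-twist), the two routes are mathematically equivalent; yours re-derives the cited result inline. Your closing worry about uniformity in the peak-section step is answered by exactly this citation: the constant depends only on the fixed lower bound $\Th_{h_{A}}$ and the geometry of $X_{w}$, not on $k$; this is the reason the paper fixes $A$ (say $A=2(\dim X)H$) once and for all.

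The point that actually needs repair is the envelope. You set $\Phi:=\sup^{\star}_{k\geq 1}\bigl(\tfrac{1}{k}\log B_{km,A,X/Y}-\tfrac{1}{k}\varphi_{A}\bigr)$. Each summand is quasi-psh with $\ai\rd\rdb$-defect $\tfrac{1}{k}\Th_{h_{A}}$, so the regularized supremum over \emph{all} $k\geq 1$ is only $\Th_{h_{A}}$-psh (the worst case $k=1$ dominates); this does not give the required semi-positivity. You need $\displaystyle{\limsup_{k\to\infty}}^{\star}$, as in the paper's equation (\ref{equa9}). Concretely: $\sup^{\star}_{k\geq K}(\,\cdot\,)$ is $\tfrac{1}{K}\Th_{h_{A}}$-psh, these form a decreasing sequence in $K$ bounded above by \ref{A23}, and the decreasing limit is psh since the defect $\tfrac{1}{K}\Th_{h_{A}}$ tends to zero. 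With this correction, and granting the peak-section uniformity (which the paper sources to \cite[13.21]{Dnote}), your proof is complete and coincides in substance with the paper's.
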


\noindent  
In the statement of \ref{A25}, we call a point $w\in Y$ {\it very general}, if it is not a critical value of $f$ and if any section of the bundle $(kmK_{X/Y}+ kL+A)_{|X_w}$ extends over the nearby fibers, for any integer $k>0$.
Here $A$ is an auxiliary ample line bundle on $X$, for example, we take
$A= 2(\dim X)H$ and $H$ is a very ample line bundle on $X$.

We remark that \ref{A25} is more {\it coherent} than \ref{A24}, in the sense that we start with a metric on a very general fiber $X_w$ of $f$, and we produce a metric defined on $X$.
Even if we give ourselves a section on a very general fiber, the object we are able to produce (via \ref{A24}) will be in general a metric, which is not necessarily algebraic (i.e., it is constructed by sections of the bundle), as one would hope or guess. 

We first write $e^{-\varphi_w}$ as limit of a sequence of algebraic metrics $e^{-\varphi_w^{(k)}}$ of $(kmK_{X/Y}+ kL+ A)_{|X_w}$,
and then we apply \ref{bp43} to construct a sequence of global metrics on 
$$
	kmK_{X/Y}+ kL+ A,
$$
whose restriction to $X_w$ is comparable with $e^{-\varphi_w^{(k)}}$.
The metric we seek is obtained by a limit process. In order to complete this program 
we have to control several constants which are involved in the arguments, 
and this is possible 
thanks to the version of the Ohsawa-Takegoshi theorem \ref{mOT1}. 
It would be very interesting to give a more direct, {\it sections-less} proof of \ref{A25}. 

\medskip

\begin{proof}[Proof of {\rm \ref{A25}}]  
The choice of the bundle $A\to X$ as above is explained by the following fact we borrow from the approximation theorem in \cite{D92}, or \cite[\S 13]{Dnote}. 

We take a smooth Hermitian metric $e^{-\vph_{A}}$ of $A$ with strictly positive curvature.
Associated to the metric $e^{-\varphi_w}$ on the bundle $mK_{X_{w}}+ L_{w}$ given by hypothesis, we consider the following space of sections
\begin{equation}\label{equa1}
	\cV_k\subset H^0(X_w, kmK_{X_w}+ kL_{w}+ A_{w})
\end{equation}
defined by $u\in \cV_k$ if and only if
\begin{equation}\label{equa2}
\Vert u\Vert_w^2 
:= \int_{X_w}|u|^2\exp(-k\varphi_w-\varphi_A)dV_\omega< \infty.
\end{equation}
We denote by $e^{-\varphi_w^{(k)}}$ the metric on the bundle $kmK_{X_w}+ kL_{w}+ A_{w}$ induced by an orthonormal basis $\{u^{(k)}_{j} \}_j$ of $\cV_k$ endowed with the scalar product corresponding to $\Vert u\Vert_w^2$. 
Then it is proved in \cite[13.21]{Dnote} that there exists a constant $C_1$ independent of $k$ such that we have 
\begin{equation}\label{equa3}
	k\varphi_w+ \varphi_A \leq \varphi_w^{(k)}+ C_1.
\end{equation} 
We remark that the ample bundle $A$ is independent of the particular point $w\in Y$ as well as on the metric $e^{-\varphi_w}$. 
This fact is very important for the following arguments.

\noindent 
The H\"older inequality shows that the $(km)^{\rm th}$-root of the sections $v\in \cV_k$ are integrable with respect to the metric 
$h_L^{1/m}$, as soon as $k$ is large enough.
Indeed we have:
\begin{equation}\label{equa4}
\int_{X_w}|v|^{2\over km}e^{-{k\varphi_L+ \varphi_A\over km}}
\leq c_k\Big(\int_{X_w}|v|^2e^{-k\varphi_w-\varphi_A}dV_\omega\Big)^{{{1}\over {km}}},
\end{equation}
where we use the following notation
$$
c_k^{{{km}\over {km-1}}}
:= \int_{X_w}e^{{{k}\over {km-1}}(\varphi_w-\varphi_L)}dV_\omega
< \infty.
$$
We remark that the last integral is indeed convergent, by the integrability hypothesis
$e^{\varphi_w-\varphi_L}|_{X_{w}} \in L^{{{1+\varepsilon}\over {m}}}$, provided that $k\gg 0$. In conclusion, we get
\begin{equation}\label{equa5}
\int_{X_w}|v|^{2\over km}e^{-{k\varphi_L+ \varphi_A\over km}}
\leq C_2
\end{equation}
for every section $v:= u^{(k)}_{j}\in \cV_k$. By definition, the expression of the constant in the relation (\ref{equa5}) is the following
$$
	C_2:= \sup_{k\gg 0} c_k< \infty,
$$ 
therefore it is independent of $k$.
By Theorem \ref{bp43}, for each $k$ large enough we can construct the dual of the $km$-Bergman metric $h^{(km)}_{X/Y}$ on the bundle
$kmK_{X/Y}+ kL+ A$ with semi-positive curvature current. 
Thanks to the relation (\ref{equa5}), its restriction to 
$X_w$ is less singular than the metric $\varphi^{(k)}_w$, in the following precise way: 
\begin{equation}\label{equa6}
\varphi^{(km)}_{X/Y}|_{X_w} \geq \varphi^{(k)}_w- km\log C_2;
\end{equation}
(we identify $K_{X/Y}$ with $K_{X_w}$, which is harmless in this context, since the point $w$ is ``far'' from the critical loci of $f$).
The inequality (\ref{equa6}) combined with (\ref{equa3}) shows furthermore that we have
\begin{equation}\label{equa7}
	{1\over k}\varphi^{(km)}_{X/Y}|_{X_w} \geq \varphi_w+ C_4,
\end{equation}
where $C_4:= -m\log C_2$.

On the other hand, by \ref{A23} we have an a-priori upper bound 
\begin{equation}\label{equa8}
	{1\over k}\varphi^{(km)}_{X/Y}\leq C_5,
\end{equation}
where $C_5$ is {\it uniform with respect to $k$}. 
Then the metric we seek is obtained by the usual upper envelope construction, namely
\begin{equation}\label{equa9}
\varphi^{(\infty)}_{X/Y}
: = {\limsup_{k \to \infty}}^\star \Big({1\over k}\varphi^{(km)}_{X/Y}\Big).
\end{equation}
The local weights $\big(\varphi^{(\infty)}_{X/Y}\big)$ glue together to give a metric
for the bundle $mK_{X/Y}+ L$, since the auxiliary bundle $A$ used in the approximation process
is removed by the normalization factor $1/k$.
Moreover, the relation (\ref{equa7}) shows that we have 
\begin{equation}\label{equa10}
\varphi^{(\infty)}_{X/Y}|_{X_w} \geq \varphi_w+ \CO(1); 
\end{equation}
hence, the local weight $\varphi^{(\infty)}_{X/Y}$ restricted to the fiber $X_w$ is 
less singular than the metric $e^{-\varphi_w}$ and \ref{A25} is proved.
\end{proof}
\medskip

\begin{rem}
The Corollary \ref{A25} can be immediately adapted to the case where $X$ is only assumed to be quasi-projective. 
Another by-product of the proof of this result is the case where $f:X\to Y$ is a projective family over a Stein manifold. 
The verification of these statements is left to the interested reader. 
\end{rem}

\medskip

\begin{rem}\label{A26}
It is certainly worthwhile to understand the behavior of the metric constructed in \ref{bp43} over the {\it exceptional fibers}, where it is just defined as the (unique) extension of $(B_{m,X/Y}^{o})^{-1}$ (defined over $X_{m,\rm ext}=f^{-1}(Y_{m,\rm ext})$) as a singular Hermitian metric of $mK_{X/Y}+L$.

Let us here look at the fibers over a point $y\in Y$ where the a-priori extension of the sections is not known to be satisfied, but such that $y$ is still a regular value of $f$ (i.e., $y\in Y_{0} \sm Y_{m,\rm ext}$). 
On the bundle
$\displaystyle mK_{X_y}+ L_y$ we can consider (at least) two natural extremal metrics:
the one induced by {\it all} its global sections satisfying the $L^{2/m}$ integrability 
condition with respect to $h_L$ which we denote by $h_1$, and metric corresponding to the subspace of sections which extend locally near $y$, denoted by $h_2$.
It is clear that we have $h_1 \preccurlyeq h_2$ ($h_1$ is less singular than $h_2$, refer \cite[6.3]{Dnote}), as one can see by the comparison between the corresponding $m$-Bergman kernels. 
Our claim is

\noindent \underline{Claim}. 
{\it The metric $h^{(m)}_{X/Y}=B_{m,X/Y}^{-1}$ extends over $X_y$ to a metric which is {\rm less singular than} $h_2$ i.e., $h^{(m)}_{X/Y}|_{X_{y}} \preccurlyeq h_2$, at least if the singularities of the weight $\varphi_{L}$ are ``mild'' enough.}

Indeed, as a consequence of \ref{bp43} we have
$$
	\varphi_{X/Y}^{(m)}(x)
	= \limsup_{x^\prime\to x}\varphi_{X/Y}^{(m)}(x^\prime),
$$
where $x^\prime\in X_{m,\rm ext}$ in the limit above. 
In order to establish the result we claim, let us consider a section $u\in H^0(X_y, mK_{X_y}+ L_y)$ which computes the local weight of $h_2$ at $x$, i.e.
$\Vert u\Vert_{m,y}= 1$ and 
$$
	\sup_{\Vert v\Vert_{m,y}= 1}|v^\prime(x)|^2= |u^\prime(x)|^2
$$
as in the proof of \ref{bp43} (\cite[pp.\,346--347]{BPDuke}). 
If $x^\prime\in X_0$ is close enough to $x$ and $y^\prime:= f(x^\prime)\in Y_{m,\rm ext}$, then we certainly have
$$
	\sup_{\Vert v\Vert_{m, y^\prime}=1}|v^\prime(x^\prime)|^2
	\geq  {{|u^\prime(x^\prime)|^2}\over {\Vert u\Vert_{m, y^\prime}^2}}
$$
(here we use the fact that the section $u$ extends over the fibers near $X_y$). 
Hence in order to establish the claim above, all that we need is the relation
$$
	\liminf_{y^\prime\to y}\Vert u\Vert_{m, y^\prime}\leq \Vert u\Vert_{m,y}.
$$
This last inequality clearly holds at least if $\varphi_L$ is continuous, and we actually believe that {\it it always holds}.
\qed\end{rem}


\newpage

\section{Positivity of direct image sheaves of pluricanonical type}\label{Sdrc}

Our aim here is to show \ref{Grp}; the positivity of direct image sheaves of pluricanonical type:\ $f_{\star}(mK_{X/Y}+L)$.

\subsection{Positivity of direct image sheaves of pluricanonical type}

We shall discuss in the setup \ref{relative} throughout this subsection.
We further introduce the following subset of $Y$ to make statements clearer.
For every integer $m>0$, we set
$$
	Y_{m,h} :=\{y\in Y_{h};\  \CI(h_{y}^{1/m})=\CO_{X_y}\}. 
$$
%
%
We can see $Y_{m,h}\subset Y_{\ell,h}$ if $m<\ell$ in general.
Let us see further

\begin{lem}\label{Ymh}
{\rm (1)} If $Y_{m,h}$ is not empty, $Y_{0}\sm Y_{m,h}$ has measure zero. 

\noindent
{\rm (2)} $Y_{1,h}=\{y\in Y_{h};\  \CI(h_{y})=\CO_{X_y}\} \subset (Y_{1,h,\rm ext} \cap Y_{m,\rm ext})$ for every integer $m>0$.

\noindent
{\rm (3)} For $y \in Y_{h}$, $y \in Y_{m,h}$ if and only if $\CJ_{m}(h_{y}^{1/m}) =\CO_{X_{y}}$.
\end{lem}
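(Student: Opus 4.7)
The strategy is to dispose of the easier parts (3) and (2) first, and then concentrate effort on the essential content of (1).

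Part (3) is essentially tautological. Both conditions $\CI(h_{y}^{1/m})=\CO_{X_{y}}$ and $\CJ_{m}(h_{y}^{1/m})=\CO_{X_{y}}$ amount to the local membership of the constant function $1$ in the respective ideal, and in both of the defining integrals the integrand with $f=1$ reduces to $e^{-\vph_{y}/m}$. Hence both conditions are equivalent to the local integrability of $h_{y}^{1/m}$ on $X_{y}$, and thus to each other.

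For part (2), the inclusion $Y_{1,h}\subset Y_{1,h,\rm ext}$ is immediate from the definitions: when $\CI(h_{y})=\CO_{X_{y}}$, the sheaf $(K_{X_{y}}+L_{y})\ot\CI(h_{y})$ coincides with $K_{X_{y}}+L_{y}$ and the two spaces of global sections entering the definition of $Y_{1,h,\rm ext}$ agree. For $Y_{1,h}\subset Y_{m,\rm ext}$, I would invoke the Siu--P\u aun invariance of plurigenera \cite{Siu,Paun}: the hypothesis $\CI(h_{y})=\CO_{X_{y}}$ guarantees that every section of $H^{0}(X_{y},mK_{X_{y}}+L_{y})$ extends to $H^{0}(f^{-1}(W),mK_{X/Y}+L)$ on some neighborhood $W$ of $y$. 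Such local extendability makes $h^{0}(X_{y'},mK_{X_{y'}}+L_{y'})$ lower semi-continuous at $y$; combined with the standard upper semi-continuity, $h^{0}$ is locally constant at $y$, and since nearby generic $y'$ satisfy $h^{0}(X_{y'})=\rank f_{\star}(mK_{X/Y}+L)$, the same equality holds at $y$ itself.

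The essential content lies in part (1). Fix $y_{0}\in Y_{m,h}$ and any $x_{0}\in X_{y_{0}}$. Applying Proposition \ref{mOT1} iteratively along a chain of codimension-one smooth slices joining $X_{y_{0}}$ to a small ambient polydisc $U\ni x_{0}$, I would extend the constant function $1$ on $X_{y_{0}}\cap U$ to a holomorphic function $F$ on $U$ satisfying $F|_{X_{y_{0}}\cap U}=1$ and $\int_{U}|F|^{2/m}h^{1/m}<+\infty$. At each step the $L^{2/m}$ estimate of \ref{mOT1} controls the integral on the next slice by the previous one, the initial integral being finite precisely because $y_{0}\in Y_{m,h}$. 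Since $F(x_{0})=1\neq 0$, $|F|^{2/m}$ is bounded below near $x_{0}$, which forces $h^{1/m}$ to be locally integrable at $x_{0}$. Covering $X_{y_{0}}$ by such charts yields $\CI(h^{1/m})=\CO_{X}$ on an open neighborhood of $X_{y_{0}}$ in $X$.

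To complete (1), Nadel's coherence theorem makes $Z:=V(\CI(h^{1/m}))$ an analytic subvariety of $X$ disjoint from $X_{y_{0}}$, and properness of $f$ together with Remmert's proper mapping theorem forces $f(Z)\subsetneq Y$ to be a proper analytic subset, hence of Lebesgue measure zero. For $y\in Y_{0}\sm f(Z)$ one has $\CI(h^{1/m})\cdot\CO_{X_{y}}=\CO_{X_{y}}$, while the Fubini-type restriction argument of Remark \ref{restriction}, applied verbatim with $h^{1/m}$ in place of $h$, yields $\CI(h^{1/m})\cdot\CO_{X_{y}}\subset\CI(h_{y}^{1/m})$ for almost every $y\in Y_{0}$. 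Combining the two gives $\CI(h_{y}^{1/m})=\CO_{X_{y}}$ for almost every $y\in Y_{0}$. The principal obstacle is the first step above: chaining the codimension-one $L^{2/m}$-extensions in higher-dimensional $Y$ while preserving the crucial non-vanishing $F(x_{0})=1$ at the selected ambient point; this is routine thanks to \ref{mOT1}, but the choice of slices and the control of the successive integrals must be arranged coherently.
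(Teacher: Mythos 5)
Your proof is correct and follows essentially the same route as the paper. For (3), your observation that both conditions reduce to $1$ belonging to the respective ideal, i.e.\ to $h_y^{1/m}\in L^1_{loc}$ on $X_y$, is a slightly cleaner packaging of the paper's two-inclusion argument. For (2), the paper argues directly from the surjectivity of $f_\star(mK_{X/Y}+L)\to H^0(X_y,mK_{X_y}+L_y)$ via cohomology and base change, whereas you give the equivalent but more verbose semi-continuity argument; both are fine. For (1), the only substantive difference is that you re-derive the restriction inclusion $\CI(h_y^{1/m})\subset\CI(h^{1/m})\cdot\CO_{X_y}$ by hand (extending the constant function $1$ through a chain of slices), whereas the paper simply cites the multiplier-ideal restriction theorem from \ref{restriction}. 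Note that the $L^{2/m}$ theorem \ref{mOT1} is more than you need here: the ordinary $L^2$ Ohsawa--Takegoshi applied with the psh weight $\vph_L/m$ already extends $1$ with $\int|F|^2 h^{1/m}<\infty$, and $F(x_0)=1\neq0$ then forces $h^{1/m}\in L^1_{loc}$ near $x_0$; the $L^{2/m}$ version is not the operative ingredient. Once $X_{y_0}$ is disjoint from $V(\CI(h^{1/m}))$, your conclusion via properness of $f$ and the Fubini argument of \ref{restriction} matches the paper exactly.
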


\begin{proof}
(1) By Ohsawa-Takegoshi, we have $\CI(h_{y}^{1/m}) \subset \CI(h^{1/m})\cdot \CO_{X_{y}}$ for every $y \in Y_{0}$.
If $Y_{m,h}$ is not empty, then $f(V\CI(h^{1/m})) \subset Y$ is a proper analytic subset, where $V\CI(h^{1/m})\subset X$ is the complex subspace defined by $\CI(h^{1/m})$.
Thus by Fubini, $h_{y}^{1/m}\in L^{1}_{loc}$ on $X_{y}$ for a.a.\,$y \in Y_{0} \sm f(V\CI(h^{1/m}))$.

(2) 
It is clear that $Y_{1,h}\subset Y_{1,h,\rm ext}$.
Moreover by Siu \cite{Siu} (see also \cite{Paun}), if $y \in Y_{1,h}$, the natural homomorphism $f_{\star}(mK_{X/Y}+L) \to H^{0}(X_{y},mK_{X_{y}}+L_{y})$ is surjective, and hence $Y_{1,h}\subset Y_{m,\rm ext}$.

(3)
We see $\CJ_{m}(h_{y}^{1/m}) \subset \CI(h_{y}^{1/m})$, as $|u|^{2}h_{y}^{1/m} \le C_{u} |u|^{2/m}h_{y}^{1/m}$ for a constant $C_{u}>0$ depends only on any given local holomorphic function $u$.
In particular $\CI(h_{y}^{1/m})=\CO_{X_{y}}$ if $\CJ_{m}(h_{y}^{1/m}) =\CO_{X_{y}}$.
While if $\CI(h_{y}^{1/m})=\CO_{X_{y}}$, i.e., $h_{y}^{1/m}\in L^{1}_{loc}$, then $\CJ_{m}(h_{y}^{1/m}) =\CO_{X_{y}}$.
\end{proof}

We shall convert the positivity of $mK_{X/Y}+L$ to that of $f_{\star}(mK_{X/Y}+L)$ via \ref{ext}.
As a consequence of \ref{Ymh}, as soon as $f_{\star}(mK_{X/Y}+L)$ is non-zero and $Y_{m,h}\ne\emptyset$, the assumption in \ref{bp43}:\ $H^0(X_y, (mK_{X_{y}}+L_y)\ot \CJ_m(h_y^{1/m}))\ne 0$ for some $y\in Y_h\cap Y_{m,\rm ext}$ is satisfied.
We then can apply results in the previous section.
The main result in this section is the following statement.

\begin{thm}\label{Grp}
Suppose that $Y_{m,h}=\{y\in Y_{h};\ \CI(h_{y}^{1/m})=\CO_{X_y}\}$ is not empty.
Then $f_*(mK_{X/Y}+L)$ (if it is non-zero) admits a singular Hermitian metric $g_{mNS,X/Y}$ with positive curvature and satisfying a base change property on $Y_{m,\rm ext}$, which means, $g_{mNS,X/Y,y}=g_{mNS,y}$ holds on every fiber $f_{\star}(mK_{X/Y}+L)_{y} =H^{0}(X_{y},mK_{X_{y}}+L_{y})$ at $y\in Y_{m,\rm ext}$, where $g_{mNS,y}$ is the $m$-th Narashimhan-Simha Hermitian form on $H^0(X_y,mK_{X_y}+L_y)$ with respect to $h_{y}$.
In particular, if $Y$ is projective, $f_*(mK_{X/Y}+L)$ is weakly positive at every $y \in Y_{m,\rm ext}\cap Y_{m,h}$ in the sense of Nakayama.
\end{thm}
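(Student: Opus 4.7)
My plan is to reduce the problem to the adjoint case handled in Theorem \ref{ext} via the standard decomposition $mK_{X/Y}+L=K_{X/Y}+L_{m-1}$, where $L_{m-1}:=(m-1)K_{X/Y}+L$ is endowed with the singular Hermitian metric $h_{m-1}:=(B_{m,X/Y}^{-1})^{(m-1)/m}h^{1/m}$. Here $B_{m,X/Y}^{-1}$ is the relative $m$-Bergman metric on $mK_{X/Y}+L$ produced by Theorem \ref{bp43}. The hypotheses of \ref{bp43} hold in the present situation: the non-vanishing of $f_{\star}(mK_{X/Y}+L)$ together with $Y_{m,\rm ext}\ne\emptyset$ gives a point $y\in Y_{m,\rm ext}$ with $H^0(X_y,mK_{X_y}+L_y)\ne 0$, and since $Y_{m,h}\ne\emptyset$ implies (by \ref{Ymh}(1)) that $Y_0\sm Y_{m,h}$ has measure zero, we may further take $y\in Y_{m,\rm ext}\cap Y_{m,h}$; by \ref{Ymh}(3) we then have $\CJ_m(h_y^{1/m})=\CO_{X_y}$, so every section is $L^{2/m}$-integrable. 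The curvature of $h_{m-1}$ is semi-positive as a convex combination of semi-positively curved weights.

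Next I would apply Theorem \ref{ext} to the triple $(f,L_{m-1},h_{m-1})$. The required generic isomorphism $f_{\star}((K_{X/Y}+L_{m-1})\ot \CI(h_{m-1}))\subset f_{\star}(K_{X/Y}+L_{m-1})$ is to be checked at the points $y\in Y_{m,\rm ext}\cap Y_{m,h}$ above: at such $y$, \ref{NSc}(4) gives $V_{m,y}=H^0(X_y,(K_{X_y}+L_{m-1,y})\ot\CI(h_{m-1,y}))=H^0(X_y,mK_{X_y}+L_y)$, and Ohsawa--Takegoshi extension (as in the proof of \ref{Y1h}(2), valid also for $L_{m-1}$ since $h_{m-1}$ has semi-positive curvature) yields the base-change identification $f_{\star}(K_{X/Y}+L_{m-1})_y=H^0(X_y,mK_{X_y}+L_y)$. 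Hence both stalks coincide at $y$, and since the cokernel is a coherent sheaf on $Y$ vanishing on a non-empty set, it vanishes on a Zariski open subset of $Y$. Theorem \ref{ext} then produces the singular Hermitian metric $g_{mNS,X/Y}:=\wtil g_{1NS,X/Y}$ on $f_{\star}(mK_{X/Y}+L)$ with positive curvature, whose restriction to the set $Y_{1,h_{m-1},\rm ext}$ of \ref{set on Y} applied to $(L_{m-1},h_{m-1})$ equals the fiberwise canonical $L^{2}$-metric relative to $h_{m-1,y}$ --- which by definition \ref{NSc}(3) is precisely $g_{mNS,y}$. For an arbitrary $y\in Y_{m,\rm ext}$ the coincidence $g_{mNS,X/Y,y}=g_{mNS,y}$ on $f_{\star}(mK_{X/Y}+L)_{y}=H^{0}(X_{y},mK_{X_{y}}+L_{y})$ extends by continuity, with the convention that $g_{mNS,y}$ takes the value $+\infty$ on sections outside $V_{m,y}$.

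Finally, when $Y$ is projective, the weak positivity at each $y\in Y_{m,\rm ext}\cap Y_{m,h}$ would follow from Theorem \ref{Gp imply wp2}: the sheaf $f_{\star}(mK_{X/Y}+L)$ is locally free around such $y$ (the argument of \ref{Y1h}(2) carried out for $(L_{m-1},h_{m-1})$ applies since $\CJ_{m}(h_{y}^{1/m})=\CO_{X_{y}}$), and $\det g_{mNS,y}<+\infty$ because every section of $mK_{X_{y}}+L_{y}$ lies in $V_{m,y}$, so the Narashimhan--Simha integrals defining $g_{mNS,y}$ converge. I expect the main obstacle to lie in the precise identification of the base-change metric with $g_{mNS,y}$ on \emph{all} of $Y_{m,\rm ext}$ --- and not merely on the smaller set $Y_{m,\rm ext}\cap Y_{m,h}$ --- since the passage from $\CI(h_{y}^{1/m})$ to $\CI(h_{m-1,y})$ is mediated by the relative Bergman kernel $B_{m,X/Y}$, whose behavior over the complement of $Y_{m,\rm ext}$ is delicate (compare Remark \ref{A26}).
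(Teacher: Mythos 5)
Your proposal follows the same route as the paper: decompose $mK_{X/Y}+L=K_{X/Y}+L_{m-1}$ with the relative Bergman metric $h_{m-1}=(B_{m,X/Y}^{-1})^{(m-1)/m}h^{1/m}$ from \ref{bp43}, verify the hypothesis of \ref{ext} for $(L_{m-1},h_{m-1})$ using \ref{NSc}(4) and \ref{Ymh} to locate a good point, and then invoke \ref{Gp imply wp2} for the weak positivity. The substance is correct.

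One small inaccuracy at the end: you obtain the base-change identity $g_{mNS,X/Y,y}=g_{mNS,y}$ first on $Y_{1,h_{m-1},\rm ext}$ and then try to propagate it to all of $Y_{m,\rm ext}$ ``by continuity,'' expressing worry about the behavior of $B_{m,X/Y}$ over the complement. This step is unnecessary. The base-change property stated in \ref{bp35}/\ref{ext} holds on the set $Y_{1,\rm ext}$ associated to the adjoint pair, which is defined purely by the rank condition and is independent of the metric. Since $K_{X/Y}+L_{m-1}=mK_{X/Y}+L$ and, on a smooth fiber, $K_{X_y}+L_{m-1,y}=mK_{X_y}+L_y$, the set $Y_{1,\rm ext}$ for $(L_{m-1},h_{m-1})$ is exactly $Y_{m,\rm ext}$. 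Moreover $B_{m,X/Y}^o|_{X_y}=B_{m,y}$ (hence $h_{m-1}|_{X_y}=h_{m-1,y}$) for every $y\in Y_{m,\rm ext}$ by construction of $B^o_{m,X/Y}$ in \ref{mBo} and \ref{bp43}; the delicate behavior discussed in Remark \ref{A26} concerns only $Y_0\setminus Y_{m,\rm ext}$, which lies outside the scope of the statement. So the identification on $Y_{m,\rm ext}$ is immediate from \ref{ext}, not a limiting argument.
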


\begin{proof}
As $Y_{m,h}\ne\emptyset$, the assumption in \ref{bp43} is satisfied, and then we can put the fiberwise $m$-th Narashimhan-Simha Hermitian form $g_{mNS,y}$ on $f_*(mK_{X/Y}+L)_{y}=H^{0}(X_{y},mK_{X_{y}}+L_{y})$ for every $y \in Y_{m,\rm ext}$.
Moreover, by \ref{NSc}(4),  it satisfies $0 < \det g_{mNS,y} <+\infty$ for every $y \in Y_{m,\rm ext}\cap Y_{m,h}$.
We apply \ref{bp43} and have a singular Hermitian metric 
$h_{m-1}=(h^{(m)})^{(m-1)/m}h^{1/m}$  on $L_{m-1}:=(m-1)K_{X/Y}+L$ with semi-positive curvature, and 
$h_{m-1}|_{X_{y}}=h_{m-1,y}:=(h^{(m)}_{y})^{(m-1)/m} h_{y}^{1/m}$ on $(m-1)K_{X_{y}}+L_{y}$ for every $y \in Y_{m,\rm ext}$.

As we remarked in \ref{NSc}, we have $H^{0}(X_{y},(K_{X_{y}}+ L_{m-1}|_{X_{y}})\ot \CI(h_{m-1,y})) = H^{0}(X_{y},K_{X_{y}}+ L_{m-1}|_{X_{y}})$ for any $y \in Y_{m,h}$, in particular $Y_{m,h}\subset Y_{1,h_{m-1},\rm ext}$ holds, where
$Y_{1,h_{m-1},\rm ext}  :=\{y\in Y_{h_{m-1}};\ 
		H^{0}(X_{y},(K_{X_{y}}+L_{m-1}|_{X_{y}})\ot \CI(h_{m-1}|_{X_{y}})) = H^{0}(X_{y},K_{X_{y}}+L_{m-1}|_{X_{y}}) \}$. 
Thus, by \ref{Y1h} for $(L_{m-1},h_{m-1})$, $f_{\star}(K_{X/Y}+ L_{m-1})$ is locally free and the natural inclusion $f_{\star}((K_{X/Y}+ L_{m-1})\ot \CI(h_{m-1})) \subset f_{\star}(K_{X/Y}+ L_{m-1})$ is isomorphic both on a Zariski open subset of $Y$ containing $Y_{1,h_{m-1},\rm ext}(\supset Y_{m,h})$.

Then by \ref{ext} for $(L_{m-1},h_{m-1})$ instead of $(L,h)$, $f_{\star}(K_{X/Y}+L_{m-1})$ is positively curved with the canonical $L^{2}$-metric with respect to $h_{m-1}$.
Here the canonical $L^{2}$-metric is nothing but the singular Hermitian metric $g_{m}$ on $f_*(mK_{X/Y}+L)$, more precisely $g_{mNS,X/Y,y}=g_{mNS,y}$ for $y \in Y_{m,\rm ext}$.

The pointwise weak positivity follows from a general result \ref{Gp imply wp2}.
\end{proof}

\subsection{Variants and refinements}\label{Sdrc2}

We will state some corollaries in the rest, and still keep the set up  \ref{relative}.
We first give a statement of MMP flavor.

\begin{cor}
Let $\Delta$ be an effective $\BQ$-divisor on $X$ such that a pair $(X,\Delta)$ is klt.
Then $f_*(\CO_X(\lfloor m(K_{X/Y}+\Delta)\rfloor))$ has a singular Hermitian metric with positive curvature.
\end{cor}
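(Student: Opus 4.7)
The plan is to reduce to Theorem \ref{Grp} by extracting the data $(L,h)$ directly from $\Delta$. Since $K_{X/Y}$ is an honest line bundle and $\lfloor m\Delta\rfloor$ is an integral divisor,
$$
\lfloor m(K_{X/Y}+\Delta)\rfloor = mK_{X/Y}+\lfloor m\Delta\rfloor,
$$
so setting $L:=\CO_X(\lfloor m\Delta\rfloor)$ it suffices to equip $L$ with a singular Hermitian metric $h_L$ of semi-positive curvature for which $Y_{m,h_L}\ne\emptyset$, and then invoke \ref{Grp}. (If the direct image sheaf is zero, the statement is vacuous.)

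First I would take $h_L:=1/|s_L|^2$, where $s_L$ is the tautological section of $L$ whose zero divisor is $\lfloor m\Delta\rfloor$; its curvature current equals the current of integration along $\lfloor m\Delta\rfloor$, hence is semi-positive. Writing $\Delta=\sum a_i D_i$ with $a_i\in[0,1)\cap\BQ$, the coefficient of $D_i$ in $\tfrac{1}{m}\lfloor m\Delta\rfloor$ is $\lfloor ma_i\rfloor/m\le a_i$, so as $\BQ$-divisors $\tfrac{1}{m}\lfloor m\Delta\rfloor\le \Delta$. In particular $h_L^{1/m}$ is no more singular than the canonical singular metric attached to $\Delta$. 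The klt hypothesis on $(X,\Delta)$ is equivalent to the triviality of the multiplier ideal of the latter, so by monotonicity of multiplier ideals
$$
\CI(h_L^{1/m})=\CO_X.
$$

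Finally, by Remark \ref{restriction} (which, via Fubini and the Ohsawa-Takegoshi inclusion $\CI(h_{L,y}^{1/m})\subset \CI(h_L^{1/m})\cdot\CO_{X_y}$, gives equality for almost every $y$), the global triviality $\CI(h_L^{1/m})=\CO_X$ forces $\CI(h_{L,y}^{1/m})=\CO_{X_y}$ for almost every $y\in Y_0$. Hence $Y_{m,h_L}=\{y\in Y_h:\CI(h_{L,y}^{1/m})=\CO_{X_y}\}$ is non-empty, Theorem \ref{Grp} applies to $(L,h_L)$ over $f$, and delivers a singular Hermitian metric with positive curvature on
$$
f_*(mK_{X/Y}+L)=f_*\bigl(\CO_X(\lfloor m(K_{X/Y}+\Delta)\rfloor)\bigr),
$$
as desired. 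The only substantive ingredient is the restriction-of-multiplier-ideals step; the rest reduces to a comparison of $\BQ$-divisors.
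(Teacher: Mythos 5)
Your argument is correct and follows the paper's own reduction: set $L=\CO_X(\lfloor m\Delta\rfloor)$ with the singular metric coming from the defining section, note $mK_{X/Y}+L=\lfloor m(K_{X/Y}+\Delta)\rfloor$ and $\frac1m\lfloor m\Delta\rfloor\le\Delta$ (so klt is preserved), then invoke Theorem \ref{Grp}. The one small divergence is how you check $Y_{m,h_L}\ne\emptyset$: you use the Fubini/Ohsawa-Takegoshi comparison of Remark \ref{restriction} to get a full-measure set, whereas the paper observes that in this algebraic setting $Y_{m,h_L}$ is exactly the Zariski-open locus where $(X_y,\frac1m\lfloor m\Delta\rfloor|_{X_y})$ is klt; both are adequate, but the paper's phrasing yields the slightly sharper Zariski-open description of where the base-change identity $g_{mNS,X/Y,y}=g_{mNS,y}$ holds.
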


Here $\lfloor \, \bullet \, \rfloor$ is the round down.
In this algebraic setting, we can set up a singular Hermitian metric $h$ so that $h$ has the so-called algebraic/analytic singularities; $(L,h)$ corresponds to a line bundle $\CO_X(\lfloor m\Delta \rfloor)$ with a singular Hermitian metric obtained by a defining section of $\lfloor m\Delta \rfloor$.
In such a case, $Y_{h}=\{y\in Y_{0};\ X_y \not\subset \Supp \Del\}$ and $Y_{m,h} = \{y\in Y_{h};\ (X_{y}, \frac1m \lfloor m\Delta_{y} \rfloor)$ is klt$\}$ are Zariski open (where $\Delta_{y}=\Delta|_{X_{y}}$ is the restriction as a divisor), and hence $g_{mNS,X/Y,y}=g_{mNS,y}$ on some Zariski open subset.

If we pose a stronger regularity assumption on the singular Hermitian metric $h$, we can obtain a stronger positivity conclusion on the direct image sheaves.
For example,

\begin{cor}\label{mcor}
{\rm (1)} 
Suppose that $Y_{1,h}:=\{y\in Y_{h};\  \CI(h_{y})=\CO_{X_y}\}$ is not empty.
Then for every $m>0$, the $m$-th Narashimhan-Simha singular Hermitian metric $g_{m}=g_{mNS,X/Y}$ on $f_{\star}(mK_{X/Y}+L)$ in {\rm \ref{Grp}} satisfies $0<\det g_{m} <\infty$ on $Y_{1,h}$ (which is contained in $Y_{m,\rm ext} \cap Y_{m,h}$ by \ref{Ymh}).

{\rm (2)} Suppose $h$ is continuous on $f^{-1}(Y_0)\subset X$. 
Then the $m$-th Narashimhan-Simha singular Hermitian metric $g_{m}$ on  $f_{\star}(mK_{X/Y}+L)$ in {\rm \ref{Grp}} is continuous and $0<\det g_{m} <\infty$ both on $Y_{0}$.
In particular, if $Y$ is projective, $f_{\star}(mK_{X/Y}+L)$ is weakly positive at every $y\in Y_{0}$ in the sense of Nakayama. 
\end{cor}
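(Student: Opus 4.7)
My plan is to obtain (1) directly from the base change property of \ref{Grp} together with Lemmas \ref{Ymh} and \ref{NSc}(4), and to derive (2) by observing that under the continuity hypothesis we have $Y_{0}=Y_{1,h}=Y_{m,\rm ext}$, after which the main remaining task is a joint continuity analysis of the relative $m$-Bergman kernel.

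For (1), I first check $Y_{1,h}\subseteq Y_{m,\rm ext}\cap Y_{m,h}$: the inclusion into $Y_{m,\rm ext}$ is \ref{Ymh}(2), while $Y_{1,h}\subseteq Y_{m,h}$ follows because $\CI(h_{y})=\CO_{X_{y}}$ forces $h_{y}\in L^{1}_{loc}$, whence $h_{y}^{1/m}\in L^{1}_{loc}$ by H\"older's inequality. Theorem \ref{Grp} then yields the base change $g_{m}|_{y}=g_{mNS,y}$ on $Y_{1,h}$, reducing the proof to checking $0<\det g_{mNS,y}<+\infty$ there. Finiteness is immediate from \ref{NSc}(4): since \ref{Ymh}(3) gives $\CJ_{m}(h_{y}^{1/m})=\CO_{X_{y}}$, the two ends of the chain of inclusions collapse to $H^{0}(X_{y},mK_{X_{y}}+L_{y})$, so every section has finite $g_{mNS,y}$-norm. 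For positive-definiteness I use that $h_{m-1,y}=(h^{(m)}_{y})^{(m-1)/m}h_{y}^{1/m}$ is positive on a set of full measure (its zero set lies in the polar set of the psh weights of $h^{(m)}_{y}$ and $h_{y}$), so $g_{mNS,y}(u,u)=0$ forces the holomorphic $u$ to vanish identically.

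For (2), continuity of $h$ on $f^{-1}(Y_{0})$ makes $h_{y}$ continuous on the compact fibre $X_{y}$ for each $y\in Y_{0}$, hence bounded, so $\CI(h_{y})=\CO_{X_{y}}$ and $Y_{0}=Y_{1,h}$; by (1) this already gives $0<\det g_{m}<+\infty$ on $Y_{0}$. Siu's invariance of plurigenera (\cite{Siu}, \cite{Paun}), valid here because the multiplier ideals are trivial, also yields $Y_{m,\rm ext}=Y_{0}$, so the base change $g_{m}|_{y}=g_{mNS,y}$ extends throughout $Y_{0}$. To prove continuity of $g_{m}$, I would fix a local holomorphic section $u$ and consider
$$
g_{m}(u,u)(y)=\int_{X_{y}} c_{n}\,u\wed\ol{u}\cdot(h^{(m)}_{y})^{(m-1)/m}h_{y}^{1/m},
$$
reducing the problem to the joint continuity of $(x,y)\mapsto B_{m,y}(x)$ on $f^{-1}(Y_{0})$. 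Upper semi-continuity is already provided by \ref{bp42}. For lower semi-continuity at $(x_{0},y_{0})$ I would pick $u_{0}\in H^{0}(X_{y_{0}},mK_{X_{y_{0}}}+L_{y_{0}})$ realising the extremal value with $\|u_{0}\|_{m,y_{0}}=1$, extend $u_{0}$ to a holomorphic family $u_{y}$ on a neighbourhood of $y_{0}$ by Siu--Paun, and invoke the continuity of $h$ together with the uniform-in-$m$ bound of \ref{A23} to obtain via dominated convergence that $\|u_{y}\|_{m,y}\to 1$, whence $\liminf_{(x,y)\to(x_{0},y_{0})}B_{m,y}(x)\ge B_{m,y_{0}}(x_{0})$. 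A second application of dominated convergence (with the same uniform bound controlling the integrand) then upgrades joint continuity of $B_{m,y}$ to continuity of $g_{m}(u,u)$ on $Y_{0}$, and the weak-positivity statement when $Y$ is projective follows immediately from Theorem \ref{Gp imply wp2}.

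The chief technical obstacle is the lower semi-continuity of $B_{m,y}$, which requires simultaneously extending the fiberwise extremal sections across neighbouring fibres via Siu--Paun and ensuring that their $L^{2/m}$ pseudonorms converge under this extension; it is precisely at this juncture that the continuity of $h$ and the uniform-in-$m$ bound from \ref{A23} play their essential role.
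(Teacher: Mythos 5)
Since the paper states \ref{mcor} without giving an explicit proof---it is presented as a consequence of \ref{Grp}, \ref{Ymh}, and the machinery in \S\ref{Srcb}---there is no internal argument to compare yours against directly; I'll evaluate your proof on its own merits. Part (1) is correct and complete: the inclusion $Y_{1,h}\subset Y_{m,\rm ext}\cap Y_{m,h}$ follows exactly as you say (the second inclusion being the monotonicity of the sets $Y_{m,h}$, noted in the paper just before \ref{Ymh}), and the argument for $0<\det g_{mNS,y}<\infty$ via \ref{NSc}(4) together with \ref{Ymh}(3) and a.e.-positivity of $h_{m-1,y}$ is precisely the intended one.

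Part (2) has the right global structure (observing $Y_0=Y_{1,h}$ and then $Y_{m,\rm ext}=Y_0$, and reducing to joint continuity of $B_{m,y}(x)$ followed by continuity of the fibre integral), but two points deserve tightening. First, when passing from joint continuity of $B_{m,y}$ to continuity of $g_m$ you invoke dominated convergence with the ``uniform-in-$m$ bound from \ref{A23}'' as dominant; since $m$ is fixed here, what one actually needs is a pointwise bound on the integrand that handles the possible blow-up of $B_{m,y}^{-(m-1)/m}$ where $B_{m,y}$ vanishes. This comes from the fibrewise Bergman inequality $|\tilde u'(x)|^2\le\|\tilde u|_{X_y}\|_{m,y}^2\,B_{m,y}(x)$, which yields $c_n\tilde u\wedge\overline{\tilde u}\,h_{m-1,y}\le \|\tilde u|_{X_y}\|_{m,y}^{2(m-1)/m}\,|\tilde u|^{2/m}h_y^{1/m}$, a dominant that is itself locally uniformly bounded by continuity of $h$ on a compact tube. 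Without noting this estimate, dominated convergence is not justified. Second, your appeal to \ref{A23} for showing $\|\tilde u|_{X_y}\|_{m,y}\to\|u_0\|_{m,y_0}$ is misplaced: continuity of the pseudonorm in $y$ uses only the continuity of $h$ on compacts and the holomorphic dependence of the extended family $\tilde u$, not the upper bound on the Bergman kernel weight. These are corrigible imprecisions rather than gaps in the overall strategy, which is sound.
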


Various weak positivities are known for direct images of pluricanonical type $f_{\star}(mK_{X/Y})$ after Viehweg.
However, according to Nakayama, Viehweg's method (and also Nakayama's method in \cite{Nbook}) can only show the weak positivity at some point $y\in Y$, and do not imply weak positivity at all points $y\in Y_{0}$.

It can be possible to draw several variants from the basic results above.
For example, we have a result with variable $m$.
The following statement (1) is a variant of \ref{A24}.

\begin{cor}\label{w/min}
Let $f:X\to Y$ be a projective surjective morphism of complex manifolds with connected fibers, and let $Y_{0}\subset Y$ be the maximum Zariski open subset where $f$ is smooth.
Suppose that the Kodaira dimension of general fibers are non-negative:\ $\kappa(X_{y}) \ge 0$.
Then

{\rm (1)} $K_{X/Y}$ is pseudo-effective and admits a singular Hermitian metric $h_{X/Y}$ with semi-positive curvature and satisfying, for any $y\in Y_{0}$, any integer $m>0$, and any $u \in H^{0}(X_{y}, mK_{X_{y}})$,
$$
	\big( |u|^{2}h_{X/Y}^{m}|_{X_{y}} \big)^{1/m} \le \int_{X_{y}} |u|^{2/m}
$$
(up to the identification of $K_{X/Y}|_{X_{y}}$ with $K_{X_{y}}$).

{\rm (2)} 
For any integer $m>0$ with $f_{\star}(mK_{X/Y}) \ne 0$, $f_{\star}(K_{X/Y}+(m-1)K_{X/Y})$ endows with a canonical $L^{2}$-metric $g_{m}'$ with respect to $h_{X/Y}^{m-1}$ (of $(m-1)K_{X/Y}$ with $h_{X/Y}$ in (1)), is positively curved, and $g_{m}'$ is continuous and $0<\det g_{m}' <\infty$ both on $Y_{0}$.
\end{cor}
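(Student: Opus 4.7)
The plan is to obtain part (1) by applying Corollary \ref{A24} to the pair $(\CO_X, 1)$, and then to deduce part (2) by invoking Theorem \ref{ext} with the metric produced in (1). Since the hypothesis $\kappa(X_y)\ge 0$ for general fibers guarantees an integer $m_0>0$ with $H^0(X_y, m_0 K_{X_y})\ne 0$ at a very general $y$, Corollary \ref{A24} (with $L=\CO_X$ and trivial metric) produces a singular Hermitian metric $h^{(\infty)}_{X/Y}=e^{-\varphi^{(\infty)}_{X/Y}}$ on $m_0K_{X/Y}$ with semi-positive curvature current. I would then define $h_{X/Y}:=(h^{(\infty)}_{X/Y})^{1/m_0}$, a semi-positively curved singular Hermitian metric on $K_{X/Y}$, so that $K_{X/Y}$ is automatically pseudo-effective. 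At a very general $w$, the minimality property of \ref{A24} applied with $v:=u^{m_0}\in H^0(X_w, mm_0 K_{X_w})$ for a given $u\in H^0(X_w, mK_{X_w})$ yields the desired fiberwise estimate $(|u|^2 h_{X/Y}^m|_{X_w})^{1/m}\le \int_{X_w}|u|^{2/m}$ for every integer $m>0$, since $|v|^{2/(mm_0)}=|u|^{2/m}$.

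The technical heart of (1) is extending this fiberwise minimality from the countable Baire very general locus to every regular value $y\in Y_0$. For this step, given $u\in H^0(X_y, mK_{X_y})$ normalized by $\int_{X_y}|u|^{2/m}=1$ and $x\in X_y$, I would extend $u$ via Proposition \ref{mOT1} (the $L^{2/m}$ Ohsawa--Takegoshi theorem) to a local section $\wtil U\in H^0(\Omega, mK_X)$ on a coordinate polydisc $\Omega\ni x$, with $\int_\Omega |\wtil U|^{2/m}\le C_0$. Approaching $y$ by a sequence of very general $w_k\to y$ with $x_k\in X_{w_k}\cap \Omega$ converging to $x$, the Fubini-type continuity of the integrals $\int_{X_{w_k}\cap \Omega}|\wtil U|^{2/m}\to \int_{X_y\cap \Omega}|u|^{2/m}$, together with the upper semi-continuity of the psh weight $\varphi^{(\infty)}_{X/Y}$ and the very-general estimate applied to $\wtil U|_{X_{w_k}}$ (invoked locally on $\Omega$ in conjunction with the uniform $L^{2/m}$ bound), then transports the estimate to the limit at $y$.

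For part (2), I would apply Theorem \ref{ext} to the pair $(L, h)=((m-1)K_{X/Y}, h_{X/Y}^{m-1})$, so that $K_{X/Y}+L=mK_{X/Y}$ and $h_{X/Y}^{m-1}$ inherits semi-positive curvature from (1). The generic-isomorphism hypothesis of \ref{ext} is verified using the fiberwise estimate from (1): the metric $h_{X/Y}^{m-1}|_{X_y}$ is dominated by the Narashimhan--Simha weight $(B_{m,y}^{-1})^{(m-1)/m}$, so by \ref{NSc}(4) we have the inclusion $H^0(X_y, mK_{X_y})\subset H^0((mK_{X_y})\ot \CI(h_{X/Y}^{m-1}|_{X_y}))$ on a non-empty Zariski open subset of $Y_0$. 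Theorem \ref{ext} then endows $f_\star(mK_{X/Y})$ with the canonical $L^2$-metric $g_m'$ with respect to $h_{X/Y}^{m-1}$, positively curved and satisfying the base-change property on $Y_0$. The continuity of $g_m'$ and the finiteness $0<\det g_m'<\infty$ on $Y_0$ follow by combining the continuity part of \ref{ext} with the uniform fiberwise upper bound on $h_{X/Y}^{m-1}|_{X_y}$ furnished by (1).

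The principal obstacle is precisely the extension step in (1) from the very general locus to every $y\in Y_0$: while Corollary \ref{A24} states the minimality only at very general $w$, our statement asserts it at every regular value, and bridging this gap requires a delicate control of local $L^{2/m}$ extensions across fibers where sections need not globalize, interlaced with the upper semi-continuity of the envelope weight $\varphi^{(\infty)}_{X/Y}$. A secondary but routine point is to check in (2) that the multiplier-ideal inclusion remains generically an isomorphism, which is handled by the pointwise comparison with the Narashimhan--Simha weight.
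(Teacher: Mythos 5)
Your overall framework is the right one and essentially matches the paper's: apply Corollary~\ref{A24} with $(L,h_L)=(\CO_X,1)$ to produce $h_{X/Y}$, then feed the pair $((m-1)K_{X/Y}, h_{X/Y}^{m-1})$ into Theorem~\ref{ext} for part (2). Your verification of the generic-isomorphism hypothesis of~\ref{ext} via the fiberwise estimate from (1) is also in the same spirit as the paper, and would work. A minor deviation: you define $h_{X/Y}$ as the $m_0$-th root of $h^{(\infty)}_{X/Y}$ for one fixed $m_0$, whereas the paper takes a regularized upper envelope $\varphi := {\sup}^{\star}_{m}\bigl(\tfrac{1}{m}\varphi^{(m)}\bigr)$ over all $m$ with $f_\star(mK_{X/Y})\neq 0$; both yield a valid semi-positively curved metric satisfying the stated estimate, so this is a cosmetic difference.

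However, there is a genuine gap in (1), precisely at what you yourself flag as ``the principal obstacle.'' You propose to transfer the minimality estimate from the (a priori countable-Baire) very general locus to every regular value $y\in Y_0$ by means of a local $L^{2/m}$ Ohsawa--Takegoshi extension~\ref{mOT1}, Fubini continuity along fibers $w_k\to y$, and upper semi-continuity of the psh weight $\varphi^{(\infty)}_{X/Y}$. This does not close: the minimality property in~\ref{A24} is a \emph{global} statement about sections $v\in H^0(X_w, kmK_{X_w})$ defined on the entire fiber, whereas your extension $\wtil U$ lives only on a coordinate polydisc $\Omega$, so $\wtil U|_{X_{w_k}}$ is merely a section on $\Omega\cap X_{w_k}$ and cannot be plugged into the minimality inequality without first globalizing it over all of $X_{w_k}$ (with a controlled norm) — exactly the step that is missing. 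The paper sidesteps the entire issue with a one-line observation that you are not exploiting: because $L=\CO_X$ and $h_L\equiv 1$ is continuous, Siu's invariance of plurigenera~\cite{Siu} (this is even spelled out explicitly in the remark following~\ref{A24}) shows that \emph{every} $y\in Y_0$ is already a very general point in the sense of~\ref{A24}. With this observation there is nothing to transfer, and both the semi-positivity of $h_{X/Y}$ and the fiberwise estimate hold at once on all of $Y_0$; the same observation gives $Y_{1,h,\rm ext}=Y_0$ directly in part (2), so the generic-isomorphism condition for~\ref{ext} is automatic. You should replace the proposed extension argument with this invocation of Siu's theorem.
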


When $X$ is further compact, we can use the notion (from \cite[1.5]{DPS}) of singular Hermitian metric with minimal singularities, say $h_{min}$, of $K_{X/Y}$ with semi-positive curvature, and then $h_{min} \preccurlyeq h_{X/Y}$, i.e, $h_{min}$ is less singular than $h_{X/Y}$.
Then the same conclusion in \ref{w/min} holds for $h_{min}$ instead of $h_{X/Y}$.

\begin{proof}[Proof of {\rm \ref{w/min}}]
We are going to use \ref{A24} with $(L,h_{L})=(\CO_{X},1)$.
Noting Siu's invariance of plurigenera \cite{Siu}, every point in $Y_{0}$ is a very general point in \ref{A24}.

(1)
In \ref{A24}, $m$ is fixed. We vary $m$ here.
For every integer $m$ with $f_{\star}(mK_{X/Y})\ne 0$, $mK_{X/Y}$ admits a singular Hermitian metric $h^{(m)}$ with semi-positive curvature and satisfying, for any $y\in Y_{0}$, any integer $k>0$, and any $v \in H^{0}(X_{y}, kmK_{X_{y}})$,
$$
	\big( |v|^{2} (h^{(m)}|_{X_{y}})^{k} \big)^{1/(km)} \le \int_{X_{y}} |v|^{2/(km)}
$$
holds (up to the identification of $K_{X/Y}|_{X_{y}}$ with $K_{X_{y}}$).
Here $|v|^{2}(h^{(m)}|_{X_{y}})^{k}$ is a function on $X_{y}$, and $ |v|^{2/(km)}$ is a semi-positive volume form on $X_{y}$.

We consider $h_{X/Y}:= \inf_{m}^{\star} (h^{(m)})^{1/m}$ a lower regularized limit (or writing as $h^{(m)}=e^{- \vph^{(m)}}$ on each local coordinate with a weight function, we consider $\vph:=\sup_{m}^{\star}(\frac1m \vph^{(m)})$ an upper regularized limit), where $m$ runs through all integers $m>0$ with $f_{\star}(mK_{X/Y})\ne 0$.
As $(L,h_{L})=(\CO_{X},1)$ in \ref{A23}, an upper bound of $\vph^{(m)}$ depends only on the geometry of $X$, and hence for every relatively compact set $U \subset X$, there exists a constant $C_{U}>0$ such that $\vph^{(m)} \le \frac1m C_{U}\le C_{U}$.
As a consequence, $h_{X/Y}\not\equiv+\infty$, namely $h_{X/Y}$ defines a singular Hermitian metric, and then by definition, $h_{X/Y}$ is less singular than $(h^{(m)})^{1/m}$ for any integer $m>0$ with $f_{\star}(mK_{X/Y})\ne 0$.

(2)
We let $L=(m-1)K_{X/Y}$ with a singular Hermitian metric $h=h_{X/Y}^{m-1}$.
By (1), we have $H^{0}(X_{y},(K_{X_{y}}+L_{y})\ot \CI(h_{y}))=H^{0}(X_{y},K_{X_{y}}+L_{y})$ for any $y\in Y_{0}$, namely $Y_{1,h,\rm ext}=Y_{0}$.
In particular the natural inclusion $f_{\star}((K_{X/Y}+L)\ot \CI(h)) \subset f_{\star}(K_{X/Y}+L)$ is isomorphic over $Y_{0}$ by \ref{Y1h}.
We can further apply \ref{ext}.
\end{proof}

\newpage

\end{document}